\DeclareMathAlphabet{\mathscrbf}{OMS}{mdugm}{b}{n}
\declaretheorem[numberwithin=section]{theorem}
\declaretheorem[sibling=theorem]{proposition}
\declaretheorem[sibling=theorem]{definition}
\declaretheorem[sibling=theorem]{corollary}
\declaretheorem[sibling=theorem]{lemma}
\declaretheorem[sibling=theorem,name=Proposition / Definition]{propdef}
\declaretheorem[sibling=theorem,style=remark]{remark}
\numberwithin{equation}{section} 
\def\R{\mathbb R}
\def\C{\mathbb C}
\newcommand{\diff}{\,\mathrm{d}}
\def\1{\mathbbm 1}
\def\H{\mathscr{H}}
\def\S{\mathscr{S}}
\def\H{\mathcal{H}}
\def\tensor{\otimes}
\def\supp{\mathrm{supp}\,}
\newcommand{\cross}{\mathrm{cr}}
\newcommand{\gap}{\mathrm{gap}}
\newcommand{\Tr}{\mathrm{Tr}}
\newcommand{\vertiii}[1]{{\left\vert\kern-0.25ex\left\vert\kern-0.25ex\left\vert #1 
    \right\vert\kern-0.25ex\right\vert\kern-0.25ex\right\vert}}
\renewcommand\emptyset{\varnothing}
\newcommand{\ip}[2]{\left\langle\,{#1}, {#2}\,\right\rangle} 
\begin{document}

\title{Segal-Bargmann transform:\\the $q$-deformation}
\author{Guillaume C\'ebron \\
Universit\'e Paul Sabatier\\
Institut de Math\'ematiques de Toulouse\\
118 Route de
Narbonne, 31062 Toulouse, France\\
\texttt{guillaume.cebron@math.univ-toulouse.fr}
\and
Ching-Wei Ho \\
Department of Mathematics \\
University of California, San Diego \\
La Jolla, CA 92093-0112 \\
\texttt{cwho@ucsd.edu}
}

\date{\today}

\maketitle

\begin{abstract}
We give identifications of the $q$-deformed Segal-Bargmann transform and define the Segal-Bargmann transform on mixed $q$-Gaussian variables. We prove that, when defined on the random matrix model of \'Sniady for the $q$-Gaussian variable, the classical Segal-Bargmann transform  converges to the $q$-deformed Segal-Bargmann transform in the large $N$ limit. We also show that the $q$-deformed Segal-Bargmann transform can be recovered as a limit of a mixture of classical and free Segal-Bargmann transform.
\end{abstract}

\tableofcontents
\newpage

\section{Introduction}

Let $H$ be a real finite-dimensional Hilbert space. Let $\gamma$ be the standard Gaussian measure on $H$,
whose density with respect to the Lebesgue measure at $h\in H$ is
$(2\pi)^{-d/2}\exp(-\|h\|_H^2/2)$. Let $\mu$ be the Gaussian measure on the complexification $H^{\mathbb{C}}=H+iH$ of $H$ 
whose density with respect to the Lebesgue measure at $h\in H^{\mathbb{C}}$ is
$\pi^{-d}\exp(-\|h\|_{H^{\mathbb{C}}}^2)$. 
For all $f\in L^2(H,\gamma)$, the map
$z\mapsto \int_{H}f(z-x)\diff \gamma (x),$
admits an analytic continuation $\mathscr{S}(f)$ to $H^{\mathbb{C}}$. Furthermore, the map $\mathscr{S}(f)$ is in the closed subspace of holomorphic functions of $L^2(H^{\mathbb{C}},\mu)$, hereafter denoted by $\mathcal{H}L^2(H^{\mathbb{C}},\mu)$. The resulting map\begin{equation}\mathscr{S}:L^2(H,\gamma)\to \mathcal{H}L^2(H^{\mathbb{C}},\mu)\label{SBeq}\end{equation} is known as the Segal-Bargmann transform, introduced by Segal \cite{Segal1963, Segal1978} and Bargmann \cite{Bargmann1961, Bargmann1962} in early 1960s.

\subsection{$q$-Deformed Segal-Bargmann Transform}
In \cite{Leeuwen1995}, Leeuwen and Massen considered a $q$-deformation of the Segal-Bargmann transform in the one-dimensional case. For all $0\leq q < 1$, the measure replacing the Gaussian measure is the $q$-Gaussian measure $\nu_q$ on $\mathbb{R}$, whose density with respect to the Lebesgue measure is
$$\nu_q(dx) = \1_{|x|\leq2/\sqrt{1-q}}\frac{1}{\pi}\sqrt{1-q}\sin\theta\prod_{n=1}^\infty (1-q^n)|1-q^ne^{2i\theta}|^2\diff x,$$
where $\theta\in [0,\pi]$ is such that $x=2\cos(\theta)/\sqrt{1-q}$. The $q$-deformation of the Segal-Bargmann transform is then defined through the kernel
$$\Gamma_q(x,z)= \prod_{k=0}^\infty \frac{1}{1-(1-q)q^kzx+(1-q)q^{2k}z^2},\ \  |x|\leq \frac{2}{\sqrt{1-q}},\ \ |z|<\frac{1}{\sqrt{1-q}}.$$
For all  function $f\in L^2(\mathbb{R},\nu_q)$, the function
$$\mathscr{S}_q(f):z\mapsto \int_{\mathbb{R}}f(x)\Gamma_q(x,z)\diff \nu_q (x)$$
is defined on the unit disk of radius $1/\sqrt{1-q}$ and the map $\mathscr{S}_q$ is in fact an isomorphism of Hilbert space between $L^2(\mathbb{R},\nu_q)$ and a reproducing kernel Hilbert space of analytic function on the unit disk of radius $1/\sqrt{1-q}$ which plays the role of the complexified version of $\nu_q$.

Let us remark that $\lim_{q\to 1}\nu_q(\diff x)=\exp(-x^2/2)/\sqrt{2\pi}\diff x$
and
$\lim_{q\to 1}\Gamma_q(x,z) = \exp(xz-z^2/2)$, which suggest to denote the standard normal distribution by $\nu_1$ and the classical Segal-Bargmann transform $\mathscr{S}$ on $L^2(\mathbb{R},\nu_1)$ by $\mathscr{S}_1$. The case $q=0$, studied in \cite{Biane1997b}, is of particular interest, since $\nu_0(\diff x)=\1_{|x|\leq 2}\sqrt{4-x^2}\frac{\diff x}{ 2\pi} $ is the well-known semicircular law and the so-called free Segal-Bargmann transform $\mathscr{S}_0$
maps isometrically $L^2(\mathbb{R},\nu_0)$ to the Hardy space of analytic functions on the unit disc. Although beyond the scope of this article, let us mention also the related work \cite{BlitvicKemp}, where Blitvi\'{c} and Kemp define a refinement of the $q$-deformed Segal-Bargmann transform.

\subsection{Matrix Approximations}
In~\cite{Biane1997b}, Biane proves that the free Segal-Bargmann transform $\mathscr{S}_0$ is the limit of the classical Segal-Bargmann transform on Hermitian matrices in the following sense: for all $N\geq 1$, let $\mathbb{M}_N$ be the space of complex matrices of size $N\times N$, let $\mathbb{H}_N$ be the subspace of Hermitian matrices $M=M^*$ of size $N\times N$, and let $\Tr$ denote the usual trace. Let $\gamma_N$ be the standard Gaussian measure on $\mathbb{H}_N$ for the norm $\|M\|^2=N\Tr(MM^*)$,
and $\mu_N$ be the standard Gaussian measure on $\mathbb{M}_N=\mathbb{H}_N+ i\mathbb{H}_N$ for the norm $\|M\|^2=2N\Re \Tr(M^2)$.
This way we can consider the Segal-Bargmann transform $\mathscr{S}:L^2(\mathbb{H}_N,\gamma_N)\to \mathcal{H}L^2(\mathbb{M}_N,\mu_N)$ defined by \eqref{SBeq}. Biane extends the transform $\mathscr{S}$ to act on $\mathbb{M}_N$-valued functions, by applying $\mathscr{S}$ entrywise. More precisely, endowing $\mathbb{M}_N$ with the norm $\|M\|^2_{\mathbb{M}_N}=\Tr(MM^*)/N$, he considers the Hilbert space tensor products $L^2(\mathbb{H}_N,\gamma_N;\mathbb{M}_N)=L^2(\mathbb{H}_N,\gamma_N)\otimes \mathbb{M}_N$ and $\mathcal{H}L^2(\mathbb{M}_N,\mu_N;\mathbb{M}_N)=\mathcal{H}L^2(\mathbb{M}_N,\mu_N)\otimes \mathbb{M}_N$, as well as the boosted Segal-Bargmann transform
$$\mathscrbf{S}_{N}=\mathscr{S}\otimes Id_{\mathbb{M}_N}:L^2(\mathbb{H}_N,\gamma_N;\mathbb{M}_N)\to \mathcal{H}L^2(\mathbb{M}_N,\mu_N;\mathbb{M}_N).$$
Each polynomial can be seen as an element of $L^2(\mathbb{H}_N,\gamma_N;\mathbb{M}_N)$ (or of $\mathcal{H}L^2(\mathbb{M}_N,\mu_N;\mathbb{M}_N)$) via the polynomial calculus, and Biane proved that, restricted to those polynomial functions, the Segal-Bargmann transform $\mathscrbf{S}_{N}$ converges to the free Segal-Bargmann transform $\mathscr{S}_0$ in the following sense: for all polynomial $P$,
$$\lim_{N\to \infty}\left\|\mathscrbf{S}_{N}(P)-\mathscr{S}_0(P)\right\|_{\mathcal{H}L^2(\mathbb{M}_N,\mu_N;\mathbb{M}_N)}=0.$$
One of the motivation of this article is to prove that the $q$-deformed Segal-Bargmann transform $\mathscr{S}_q$ can also be approximated by the classical one for $0< q \leq 1$. In the model of Biane, $L^2(\mathbb{H}_N,\gamma_N;\mathbb{M}_N)$ is an approximation of $L^2(\mathbb{R},\nu_0)$ in the sense that, for all polynomial $P$, $\|P\|_{L^2(\mathbb{R},\nu_0)}=\lim_{N\to \infty}\|P\|_{L^2(\mathbb{H}_N,\gamma_N;\mathbb{M}_N)}$. In the case of $0< q \leq 1$, we replace the previous model by a model of \'{S}niady introduced in \cite{Sniady2001} in order to approximate $L^2(\mathbb{R},\nu_q)$. Let us briefly describe this model.

Let $d\geq 0$. We endow $\mathbb{M}_{d}$ by the inner products, quotient if necessary,
$\langle A,B\rangle_1=\frac{1}{d}\Tr(AB^*)$ and 
$\langle A,B\rangle_0=\Tr(A)\Tr(B^*).
$ For all $S\subset \{1,\ldots,N\}$, we define the inner product $\langle \cdot,\cdot\rangle_S$ on $\mathbb{M}_{d^N}\simeq \bigotimes_{r=1}^N\mathbb{M}_{d}$ to be the inner product of the Hilbert space tensor product
$\bigotimes_{r=1}^N\left(\mathbb{M}_{d},\langle A,B\rangle_{\1_S(r)}\right)$. Let $\sigma=(\sigma_S)_{S\subset \{1,\ldots,N\}}$ be a family of real numbers indexed by all subsets of $\{1,\ldots,N\}$. It determines an averaged inner product
$\langle A,B\rangle_\sigma=\sum_{S\subset \{1,\ldots,n\}}\sigma_S^2\cdot \langle A,B\rangle_S$
on $\mathbb{M}_{d^N}$. Let $\gamma_{d^N}^\sigma$ be the Gaussian measure on $\mathbb{H}_{d^N}$ whose characteristic function is given by
$$\int_{\mathbb{H}_{d^N}} \exp(i\Tr(MX))\diff \gamma_{d^N}^\sigma(X)=\exp(-\|M\|^2_\sigma/2)$$
and $\mu_{d^N}^\sigma$ be the Gaussian measure on $\mathbb{M}_{d^N}$ whose characteristic function is given by
$$\int_{\mathbb{M}_{d^N}} \exp(i\Tr(MX))\diff \mu_{d^N}^\sigma(X)=\exp(-\Re \|M\|^2_\sigma/4).$$
Denoting by $\supp \gamma_{d^N}^\sigma$ the support of $\gamma_{d^N}^\sigma$, which is a linear subspace of $\mathbb{H}_{d^N}$, we have $\supp \mu_{d^N}^\sigma=\supp \gamma_{d^N}^\sigma+i\supp \gamma_{d^N}^\sigma$. The linear space $\supp \gamma_{d^N}^\sigma$ can be endowed with a unique inner product such that $\gamma_{d^N}^\sigma$ is the standard Gaussian measure on $\supp \gamma_{d^N}^\sigma$, and therefore the Segal-Bargmann transform $$\mathscr{S}:L^2(\mathbb{H}_{d^N},\gamma_{d^N}^\sigma)=L^2(\supp \gamma_{d^N}^\sigma,\gamma_{d^N}^\sigma)\to \mathcal{H}L^2(\supp \mu_{d^N}^\sigma,\mu_{d^N}^\sigma)=\mathcal{H}L^2(\mathbb{M}_{d^N},\mu_{d^N}^\sigma)$$ is well-defined as in~\eqref{SBeq}. Following the model of Biane, we consider the two following Hilbert space tensor products $L^2(\mathbb{H}_{d^N},\gamma_{d^N}^\sigma;\mathbb{M}_{d^N})=L^2(\mathbb{H}_{d^N},\gamma_{d^N}^\sigma)\otimes \mathbb{M}_{d^N}$ and $\mathcal{H}L^2(\mathbb{M}_{d^N},\mu_{d^N}^\sigma;\mathbb{M}_{d^N})=\mathcal{H}L^2(\mathbb{M}_{d^N},\mu_{d^N}^\sigma)\otimes \mathbb{M}_N$, where $\mathbb{M}_{d^N}$ is endowed with the norm $\|M\|^2_{\mathbb{M}_{d^N}}=\Tr(MM^*)/d^N$. Finally, we consider the boosted Segal-Bargmann transform
$$\mathscrbf{S}_{d^N}=\mathscr{S}\otimes Id_{\mathbb{M}_{d^N}}:L^2(\mathbb{H}_{d^N},\gamma_{d^N}^\sigma;\mathbb{M}_{d^N})\to \mathcal{H}L^2(\mathbb{M}_{d^N},\mu_{d^N}^\sigma;\mathbb{M}_{d^N}).$$
\begin{theorem}[see Theorem~\ref{theoremtwo}]\label{theoremone}Let $0\leq q \leq 1$. Under technical assumptions \ref{A}, \ref{B}, \ref{C} and \ref{D} on $\sigma$ (see Section~\ref{modelsniady}) which ensure that, for all polynomial $P$,$$\lim_{N\to \infty}\|P\|_{L^2(\mathbb{H}_{d^N},\gamma_{d^N}^\sigma;\mathbb{M}_{d^N})}=\|P\|_{L^2(\mathbb{R},\nu_q)},$$the Segal-Bargmann transform $\mathscrbf{S}_{d^N}$ converges to the $q$-deformed Segal-Bargmann transform $\mathscr{S}_q$ in the following sense: for all polynomial $P$,
$$\lim_{N\to \infty}\left\|\mathscrbf{S}_{d^N}(P)-\mathscr{S}_q(P)\right\|_{\mathcal{H}L^2(\mathbb{M}_{d^N},\mu_{d^N}^\sigma;\mathbb{M}_{d^N})}=0.$$
\end{theorem}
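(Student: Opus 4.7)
By linearity, it suffices to prove the statement for $P(x)=x^n$ with fixed $n$. The plan is to run an orthogonal-polynomial argument in parallel on the two sides. On the $q$-side, I would use the $q$-Hermite polynomials $H_k^{(q)}$ for $\nu_q$, satisfying $xH_k^{(q)}=H_{k+1}^{(q)}+[k]_q H_{k-1}^{(q)}$ with $[k]_q=1+q+\cdots+q^{k-1}$ and squared $L^2(\nu_q)$ norms $[k]_q!$; on the matrix side, the Hermite-type polynomials $H_k^{(\sigma,N)}$ in a single variable $X$ adapted to the distribution of the generator of the \'{S}niady model under $\gamma_{d^N}^\sigma$.

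The first algebraic input is the identification $\mathscr{S}_q(H_k^{(q)})(z)=z^k$, which is built into the definition of $\mathscr{S}_q$ (and checkable from the generating series $\Gamma_q$), together with its matrix analogue $\mathscrbf{S}_{d^N}(H_k^{(\sigma,N)})(Z)=Z^k$ for the classical Segal--Bargmann transform adapted to the Gaussian $\gamma_{d^N}^\sigma$. Writing $x^n=\sum_{k=0}^n c_{n,k}^{(q)}H_k^{(q)}(x)=\sum_{k=0}^n c_{n,k}^{(\sigma,N)}H_k^{(\sigma,N)}(x)$ would then yield the explicit formulas
$$\mathscr{S}_q(P)(z)=\sum_{k=0}^n c_{n,k}^{(q)} z^k,\qquad \mathscrbf{S}_{d^N}(P)(Z)=\sum_{k=0}^n c_{n,k}^{(\sigma,N)} Z^k.$$
The coefficients $c_{n,k}^{(q)}$ and $c_{n,k}^{(\sigma,N)}$ are universal polynomial expressions in the even moments of $\nu_q$ and $\gamma_{d^N}^\sigma$ respectively, and the moment convergence provided by \ref{A}--\ref{D} will give $c_{n,k}^{(\sigma,N)}\to c_{n,k}^{(q)}$ as $N\to\infty$.

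To conclude I would compute the target norm. The holomorphic Gaussian $\mu_{d^N}^\sigma$ is rotation-invariant on $\supp\mu_{d^N}^\sigma$, so the monomials $Z^k$ are mutually orthogonal in $\mathcal{H}L^2(\mathbb{M}_{d^N},\mu_{d^N}^\sigma;\mathbb{M}_{d^N})$ and the squared distance decomposes as $\sum_{k=0}^n |c_{n,k}^{(\sigma,N)}-c_{n,k}^{(q)}|^2\,\|Z^k\|^2_{\mathcal{H}L^2}$. A parallel Wick computation should show $\|Z^k\|^2_{\mathcal{H}L^2(\mathbb{M}_{d^N},\mu_{d^N}^\sigma;\mathbb{M}_{d^N})}\to [k]_q!$, matching the isometric behaviour of $\mathscr{S}_q$ on $H_k^{(q)}$. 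Coefficient convergence times uniformly bounded norms then forces the squared distance to $0$.

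The hard part will be the matrix identity $\mathscrbf{S}_{d^N}(H_k^{(\sigma,N)})(Z)=Z^k$ in the \'{S}niady model, together with the companion norm asymptotics $\|Z^k\|^2_{\mathcal{H}L^2}\to [k]_q!$. Both require a careful Wick computation of Gaussian expectations with the non-scalar covariance $\langle\cdot,\cdot\rangle_\sigma=\sum_{S}\sigma_S^2\langle\cdot,\cdot\rangle_S$, where the tensor-product structure $\bigotimes_{r=1}^N(\mathbb{M}_d,\langle\cdot,\cdot\rangle_{\1_S(r)})$ must be unpacked so that the combinatorics of pair partitions weighted by $\sigma$ reproduce the $q$-statistics of crossings encoded in $[k]_q!$. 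This is precisely the point where the conditions \ref{A}--\ref{D} on $\sigma$ enter in an essential way, and where the $q$-deformation emerges in the large-$N$ limit.
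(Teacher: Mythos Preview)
Your plan breaks down at the claimed matrix identity $\mathscrbf{S}_{d^N}(H_k^{(\sigma,N)})(Z)=Z^k$, and more generally at the assertion that $\mathscrbf{S}_{d^N}(P)(Z)$ lies in the linear span of the monomials $\{Z^k\}_{k\ge 0}$. The boosted transform acts entrywise, so for a polynomial $P$ one has $\mathscrbf{S}_{d^N}(P)(Z)=\int P(Z-X)\,\diff\gamma_{d^N}^\sigma(X)$, and expanding $(Z-X)^n$ produces integrals such as $\int XZX\,\diff\gamma_{d^N}^\sigma(X)$. Already for the standard GUE covariance (Biane's $q=0$ model) one computes $\int XZX\,\diff\gamma_N(X)=\tfrac{1}{N}\Tr(Z)\cdot I_N$, so $\mathscrbf{S}_{d^N}(X^4)(Z)$ contains terms like $\Tr(Z)\,Z$ and $\Tr(Z^2)\,I$ which are holomorphic but do not belong to the span of $\{Z^k\}$. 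In the \'Sniady model the covariance $\langle\cdot,\cdot\rangle_\sigma$ is even less invariant, and the corresponding ``trace'' corrections are more intricate. Consequently there is no scalar polynomial $H_k^{(\sigma,N)}$ with $\mathscrbf{S}_{d^N}(H_k^{(\sigma,N)})(Z)=Z^k$, the decomposition $\mathscrbf{S}_{d^N}(P)(Z)=\sum_k c_{n,k}^{(\sigma,N)}Z^k$ is not available, and the orthogonality-of-monomials reduction collapses.

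The paper avoids this obstruction entirely. Instead of computing $\mathscrbf{S}_{d^N}(P)$ explicitly, it uses the conditional-expectation representation $\mathscrbf{S}_{d^N}(P)(Z^{(N)})=\E\!\left[P(Z^{(N)}+X^{(N)})\,\middle|\,Z^{(N)}\right]$ and then the ``independent copy'' identity
\[
\left\|\mathscrbf{S}_{d^N}(P)-Q\right\|^2=\E\Big[\big(P(Z^{(N)}+X^{(N)})-Q(Z^{(N)})\big)\big(P(Z^{(N)}+Y^{(N)})-Q(Z^{(N)})\big)^*\Big],
\]
with $X^{(N)},Y^{(N)}$ independent Gaussian copies. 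This is now a single joint mixed moment of Hermitian random matrices in the \'Sniady model, to which Theorem~\ref{thSniady} applies directly; the limit is the corresponding $q$-Gaussian mixed moment. Identifying $Q(Z)=\mathscr{S}_q(P)(Z)=\tau[P(Z+X)\mid Z]=\tau[P(Z+X)\mid Z,Y]$ via the $q$-conditional-expectation picture (the $q$-Wick product version of the Hermite decomposition, valid for jointly $q$-Gaussian variables in the noncommutative probability space), the limiting expression telescopes to zero. The conceptual point is that the $q$-Hermite/Wick structure is used on the \emph{limiting} objects, where it is exact, rather than on the matrix model, where it is only asymptotic and contaminated by trace terms.
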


We are able to prove Theorem~\ref{theoremone} in the two parameter setting and in the multidimensional case.

\subsection{Two Parameter Case}A simple scaling of $\mathscr{S}:L^2(H,\gamma)\to \mathcal{H}L^2(H^{\mathbb{C}},\mu)$ gives us a unitary isomorphism $\mathscr{S}^{t}$ which depends on one parameter $t>0$. It is also possible to consider one scaling for the space $L^2(H,\gamma)$ and another scaling for the transform $\mathscr{S}$. It yields to the two-parameter Segal-Bargmann transform $\mathscr{S}^{s,t}$, where $s$ and $t$ are two parameters with $s>\frac{t}{2}>0$, which was defined by Driver and Hall in~\cite{DriverHall1999,Hall1999}. In this article, all the definition and results are considered in this two-parameter setting. In particular, we shall generalize the transform $\mathscr{S}_q$ of Leeuwen and Massen to a $q$-deformed Segal-Bargmann transform (with $-1<q<1$) given by
\begin{equation}
\mathscr{S}_q^{s,t}(f):z\mapsto \int_{\mathbb{R}} f(x)\Gamma_q^{s,t}(x,z)\nu_q^s(\diff x)\label{defSqst}
\end{equation}
where $\Gamma_q^{s,t}$ is a generating function and $\nu_q^s$ is scaled from $\nu_q$ so that it has variance $s$. With this formula, we are able to compute the range of the Segal-Bargmann transform, which is a reproducing kernel Hilbert space of analytic functions in an ellipse. It allows us to prove Theorem~\ref{theoremtwo}, which is a version of Theorem~\ref{theoremone} with two parameters $s$ and $t$.
\subsection{Multidimensional Case}
In~\cite{Biane1997b}, Biane extends the free Segal-Bargmann transform $\mathscr{S}_0$ to the multidimensional case, replacing $\mathbb{R}$ by an arbitrary real Hilbert space $H$. The space $L^2(H,\gamma)$ has to be replaced by a non-commutative generalization of a $L^2$-space. More precisely, in the classical case, $L^2(H,\gamma)$ can be viewed as the space of square-integrable random variables generated by the Gaussian field on $H$. If $-1\leq q \leq 1$, it is possible to define some $q$-deformations of Gaussian field over $H$ (see Section~\ref{qdeformationoftheSegalBargmanntransform}). The free Segal-Bargmann transform $\mathscr{S}_0$ acts on the space of square-integrable random variables generated by a $0$-deformed Gaussian field on $H$ (called semicircular system in~\cite{Biane1997b}).

In~\cite{Kemp2005}, Kemp generalizes Biane's results and defines a $q$-deformed Segal-Bargmann transform $\mathscr{S}_q$ acting on the space of square-integrable random variables generated by a $q$-deformed Gaussian field on $H$. In \cite{Ho2016}, the second author defined the two-parameter free Segal-Bargmann transform $\mathscr{S}_0^{s,t}$ acting on the space of square-integrable random variables generated by a $0$-deformed Gaussian field on $H$.
In this article, we will follow~\cite{Biane1997b,Ho2016,Kemp2005} and define the two-parameter $q$-deformed Segal-Bargmann transform $\mathscr{S}_q^{s,t}$ acting on the space of square-integrable random variables generated by a $q$-deformed Gaussian field on $H$.
Of course, if we consider $H=\mathbb{C}$, the $q$-Segal-Bargmann transform $\mathscr{S}_q^{s,t}$ is equivalent to the integral transform $\mathscr{S}_q^{s,t}$ already defined in~\eqref{defSqst}; that is to say the integral transform gives an explicit formula of the $q$-Segal-Bargmann transform in the one dimensional setting (see Corollary~\ref{frommultitoone}).

Theorem~\ref{theoremone} is true in the multidimensional case. Indeed, Theorem~\ref{theoremthree} shows that the two-parameter $q$-deformed Segal-Bargmann transform $\mathscr{S}_q^{s,t}$ acting on the space of square-integrable random variables generated by a $q$-deformed Gaussian field on $H$ can be approximated by the classical Segal-Bargmann transform.
\subsection{Mixture of $q$-Deformed Segal-Bargmann Transform}
In fact, it is possible to deform a Gaussian field over $\mathbb{R}^n$ in a much more complicated way, where a $q_{ii}$-deformed Gaussian random variable is considered for each direction of the canonical basis of $\mathbb{C}^n$, and where the correlation relation between two different variables is determined by some factors $q_{ij}$ ($q_{ij}=1$ yields the classical independence of random variables and $q_{ij}=0$ yields the free independence of random variables).

This deformation, first considered by Speicher in~\cite{Speicher1993}, is known as mixed $q$-Gaussian variables, and is uniquely determined by a symmetric matrix $Q=(q_{ij})_{1\leq i,j\leq n}$ with elements in $[-1,1]$. The case of the previous section corresponds to the case where all the elements of $Q$ are equal to a single $-1\leq q \leq 1$. It is also possible in this framework to define a $Q$-deformed Segal-Bargmann transform $\mathscr{S}_Q^{s,t}$, and restricted on the one-dimensional directions, $\mathscr{S}_Q^{s,t}$ yields to the already defined $\mathscr{S}_{q_{ii}}^{s,t}$. In particular, if all $q_{ii}$ are equal to $0$, $\mathscr{S}_Q^{s,t}$ can be seen as a noncommutative mixture of the classical Segal-Bargmann transform $\mathscr{S}^{s,t}$ (see Remark~\ref{remarkmix}).

In~\cite{Speicher1992}, Speicher proves the following central limit theorem: every $q$-deformed Gaussian random variable can be approximated by a normalised sum of mixed $q$-Gaussian variables for some appropriate choice of $Q$ with elements in $\{-1,1\}$. Similarly, M\l otkowski proves in~\cite{Mlotkowski2004} that the elements of $Q$ can be chosen in $\{0,1\}$ in the central limit theorem of Speicher.

Our last result, summed up in Theorem~\ref{theoremfour}, is the fact that the $q$-deformed Segal-Bargmann transform $\mathscr{S}_{q}^{s,t}$ can be approximated by a noncommutative mixture $\mathscr{S}_Q^{s,t}$ of the classical Segal-Bargmann transform applied on normalised sum of mixed $q$-Gaussian variables (see Remark~\ref{remarkmix}).

\subsection{Organization of the Paper}
A brief outline of the paper is as follows. In Section~\ref{Preliminaries}, we introduce the Segal-Bargmann transform $\mathscr{S}^{s,t}$, continue with a summary of the (mixed) $q$-random variables and end by a description of the random matrix model of \'{S}niady. In Section~\ref{Mainsection}, we introduce the two-parameter $q$-deformed Segal-Bargmann transform $\mathscr{S}^{s,t}_q$, and prove Theorem~\ref{theoremone}. In Section~\ref{Multisection}, we introduce the two-parameter $q$-deformed Segal-Bargmann transform in the multidimensional case, and prove Theorem~\ref{theoremthree}, the analogue of Theorem~\ref{theoremone} in this multidimensional setting. Finally, in Section~\ref{Mixsection}, we introduce the mixture of $q$-deformed Segal-Bargmann transform, and prove Theorem~\ref{theoremfour}.

\section{Preliminaries}\label{Preliminaries}
We begin by briefly introduce the already existing objects and results that will be useful for us: the two-parameter Segal-Bargmann transform, the $q$-deformation of the Gaussian measure, the $q$-deformation of independent Gaussian random variables and the model of random matrix of \'{S}niady which allows to approximate those $q$-deformed Gaussian random variables.
\subsection{Segal-Bargmann Transform}Let $H$ be a real finite-dimensional Hilbert space of dimension $d\geq 1$. For all $t>0$, we define $\gamma_t$ to be a Gaussian measure on $H$ whose density with respect to the Lebesgue measure at $x\in H$ is
$(2\pi t)^{-d/2}\exp(-\|x\|^2/2t).$ For all $r,s>0$, we define $\gamma_{r,s}$ to be a Gaussian measure on the complexification $H^{\mathbb{C}}=H+iH$ of $H$ whose density with respect to the Lebesgue measure at $x+iy\in H^{\mathbb{C}}$ is $(2\pi \sqrt{r s})^{-d}\exp(-\|x\|^2/2r-\|y\|^2/2s)$. In other words, identifying $H^{\mathbb{C}}=H+iH$ with $H\times H$, we have $\gamma_{r,s}=\gamma_{r}\otimes \gamma_{s}$: the parameters $r$ and $s$ define the respective scaling of the Gaussian measure on the real and the imaginary part of $H$.\label{fistpov}

In \cite{DriverHall1999}, Driver and Hall introduced a general version of the Segal-Bargmann transform which depends on two parameters $s$ and $t$. Let $s>t/2>0$. For all $f\in L^2(H,\gamma_s)$, the map
\begin{equation}z\mapsto \int_{H}f(z-x)\diff \gamma_t (x),\label{defconv}\end{equation}
has a unique analytic continuation $\mathscr{S}^{s,t}(f)$ to $H^{\mathbb{C}}$. Furthermore, the map $\mathscr{S}^{s,t}(f)$ is in the closed subspace of holomorphic functions of $L^2(H^{\mathbb{C}},\gamma_{s-t/2,t/2})$, denoted in the following by $\mathcal{H}L^2(H^{\mathbb{C}},\gamma_{s-t/2,t/2})$. The two parameter \emph{Segal-Bargmann transform} is the isomorphism of Hilbert space\begin{equation}\mathscr{S}^{s,t}:L^2(H,\gamma_s)\to \mathcal{H}L^2(H^{\mathbb{C}},\gamma_{s-t/2,t/2})\label{SBeqdeux}\end{equation}
The standard case considered by Segal and Bargmann corresponds to the case $s=t$, and the Segal-Bargmann $\mathscr{S}$ considered in the introduction corresponds to the case $s=t=1$.
\subsection{$q$-Gaussian Measure}
In this section, we will review some facts about $q$-Gaussian measures and $q$-Hermite polynomials. More discussions can be found in \cite{BozejkoKummererSpeicher1997,Szablowski2013}.

\begin{definition}Let $-1< q < 1$ and $t\geq 0$. The $q$-Gaussian measure $\nu_q$ of variance $1$ is defined to be
$$\nu_q(dx) = \1_{|x|\leq2/\sqrt{1-q}}\frac{1}{\pi}\sqrt{1-q}\sin\theta\prod_{n=1}^\infty (1-q^n)|1-q^ne^{2i\theta}|^2\;\diff x$$
where $\theta\in[0,\pi]$ is such that $x=2\cos\theta/\sqrt{1-q}$. The $q$-Gaussian measure $\nu_q^t$ of variance $t$ is given by $\nu_q^t(\diff x)=\nu_q(\diff x/\sqrt{t})$.
\end{definition}
Let $-1< q < 1$. For all integer $n$, set $[n]_q = 1+\cdots+q^{n-1}$. For all $t\neq 0$, the $q$-Hermite polynomials $H_n^{q,t}$ of parameter $t$ are defined by $H_0^{q,t}(x) = 1$, $H_1^q(x) = x$ and the recurrence relation
$$H_{n+1}^{q,t}(x) = x H_n^{q,t}(x)-t [n]_q H_{n-1}^{q,t}(x).$$ They form an orthogonal family with respect to $\nu_q$ with norm $[n]_q!t^n$.
Their generating function
$$\Gamma_q^t(x,z):=\sum_{k=0}^\infty \frac{z^k}{[k]_q!t^k}H_k^{q,t}(x) = \prod_{k=0}^\infty \frac{t}{t-(1-q)q^kzx+(1-q)q^{2k}z^2},$$
where $[n]_q! = \prod_{j=1}^n[j]_q$, converges whenever $|x|\leq \left|\frac{2\sqrt{t}}{\sqrt{1-q}}\right|$ and $|z|<\sqrt{\frac{t}{1-q}}$.

\subsection{$q$-Gaussian Variables and Wick Product}\label{GaussianvariablesandWickproduct}
\begin{definition}A non-commutative probability space $(\mathscr{A},\tau)$ is a unital $*$-algebra with a linear functional $\tau:\mathscr{A}\to \mathbb{C}$ such that $\tau[1_{\mathcal{A}}]=1$ and $\tau[A^*A]\geq 0$ for all $A\in \mathscr{A}$. The element of $\mathscr{A}$ are called random variables.

If $\mathcal{X}$ is a subset of $\mathscr{A}$, we denote by $L^2(\mathcal{X},\tau)$ the Hilbert space given by the completion of the (quotiented if necessary) space of the $*$-algebra generated by $\mathcal{X}$ with respect to the norm $\|A\|^2=\tau[A^*A]$, and by $\mathcal{H}L^2(\mathcal{X},\tau)$ the Hilbert space given by the completion of the (quotiented if necessary) space of the algebra generated by $\mathcal{X}$ with respect to the same norm.
\end{definition}
%
%

The following definition of $q$-Gaussian variables can be considered as a $q$-deformation of the Wick formula of Gaussian variables (the classical case corresponds to $q=1$). Let $\mathcal{P}_2(n)$ be the set of pairing of $\{1,\ldots,n\}$. Let $\pi$ be a pairing of $\{1,\ldots,n\}$. A quadruplet $1\leq i< j < k < l \leq n$ is called a crossing of $\pi$ if $\{i,k\}\in \pi$ and $\{j,l\}\in \pi$. The number of crossings of the pairing $\pi$ is denoted by $\cross(\pi)$.
\begin{definition}Let $-1\leq q \leq 1$. A set $\mathcal{X}$ of self-adjoint and centred non-commutative random variables in a non-commutative probability space $(\mathscr{A},\tau)$ is said to be \emph{jointly $q$-Gaussian} if, for all $X_1,\ldots,X_n\in \mathcal{X}$, we have
\begin{equation}
\tau[X_1\cdots X_n]=\sum_{\pi\in \mathcal{P}_2(n)}q^{\cross(\pi)}\prod_{\{i,j\}\in \pi}\tau[X_i X_j].\label{Wickq}\end{equation}
Two sets of jointly $q$-Gaussian variables $\mathcal{X}$ and $\mathcal{Y}$ are called \emph{$q$-independent} if and only if $\mathcal{X}\cup\mathcal{Y}$ is jointly $q$-Gaussian and the elements of $\mathcal{X}$ are orthogonal with the elements of $\mathcal{Y}$ in $L^2(\mathscr{A},\tau)$.

A set $\mathcal{Z}$ of non-commutative centred random variables in a non-commutative probability space $(\mathscr{A},\tau)$ is said to be \emph{jointly $q$-Gaussian} if $\{\Re Z,\Im Z:Z\in \mathcal{Z}\}$ is jointly $q$-Gaussian. Moreover, if $\tau[(\Re Z)^2]=s$ and $\tau[(\Im Z)^2]=t$, we say that $Z$ is a $(s,t)$-elliptic $q$-Gaussian variable.
\end{definition}

Let $\mathcal{X}$ be any set of jointly $q$-Gaussian variables (not necessarily self-adjoint). Then, by linearity, it follows that the linear span of $\{ X,X^*:X\in \mathcal{X}\}$ is also jointly $q$-Gaussian. If we take $X_1=\cdots=X_n=X=X^*$ in \eqref{Wickq} and such that $\tau[X^2]=1$, we obtain the formula for the moments of the $q$-Gaussian measure $\nu_q$ :
$$\tau[X^n]=\sum_{\pi\in \mathcal{P}_2(n)}q^{\cross(\pi)}=\int_{\mathbb{R}}x^n\nu_q(\diff x).$$
The $q$-Gaussian measure is called the distribution of the $q$-Gaussian variable $X$.
\begin{remark}A family of self-adjoint jointly $q$-Gaussian variables $(X_i)_{i\in I}$ is, up to isomorphism, a $q$-Gaussian process as defined in \cite{BozejkoKummererSpeicher1997}, with covariance $c:I\times I\to \mathbb{R}$ given by $c(i,j)=\tau[X_iX_j]$. We can view them as operators acting on a $q$-deformation of the Fock space over $H$ (see Section~\ref{mixedqgaussians}). In the literature, for example in \cite{BozejkoKummererSpeicher1997,BozejkoSpeicher1991,DonatiMartin2003,EffrosPopa2003,Kemp2005}, the $q$-Gaussian variables have often been considered in this particular representation. Since our work only involves the non-commutative distribution of the $q$-Gaussian variables, we found more convenient to forget about the representation of a $q$-Gaussian variables and define it via its non-commutative distribution. This non-commutative distribution is implicitly given in \cite[Proposition 2]{BozejkoSpeicher1991}, or alternatively in \cite[Corollary 2.1]{EffrosPopa2003}.
\end{remark}

\begin{definition}Let $-1\leq q \leq 1$. Let $n\geq 0$. The Wick product of $n$ jointly $q$-Gaussian variables $X_1,\ldots,X_n$, denoted by $X_1\diamond\cdots\diamond X_n$, is uniquely defined by the following recursion formula: the empty Wick product is $1$ and 
$$X_1\diamond\cdots\diamond X_n=X_1\cdot (X_2\diamond\cdots\diamond X_n)-\sum_{i=2}^nq^{i-1}\tau[X_1X_i](X_1\diamond\cdots\diamond\widehat{X_i}\diamond\cdots\diamond X_n)$$
where the hat means that we omit the corresponding element in the product.
\end{definition}

\begin{remark}The Wick product has been considered in \cite{BozejkoKummererSpeicher1997} and \cite{EffrosPopa2003} with different notation. Considering a set of self-adjoint jointly $q$-Gaussian variables $(X_i)_{i\in I}$ as a $q$-Gaussian process (as defined in \cite{BozejkoKummererSpeicher1997}) acting on the $q$-deformation of the Fock space over $L^2(X_i,\tau)$, the Wick product $X_1\diamond\cdots\diamond X_n$ coincides with the quantity denoted by $\Psi(X_1\otimes\cdots\otimes X_n)$ in \cite[Definition 2.5]{BozejkoKummererSpeicher1997} (they satisfy the same recursion formula thanks to \cite[Proof of Proposition 2.7]{BozejkoKummererSpeicher1997}). The Wick product $X_1\diamond\cdots\diamond X_n$ is denoted by ${:}X_1\cdots X_n{:}$ in \cite{EffrosPopa2003}.
\end{remark}

In \cite{EffrosPopa2003} is given an explicit formula for the Wick product of jointly $q$-Gaussian variables which are self-adjoint that we will present now. By linearity, the formula is also valid for non-necessarily self-adjoint variables. A Feynman diagram $\gamma$ on $\{1,\ldots,n\}$ is a partition of $\{1,\ldots,n\}$ into one- and two-element sets. The set of Feynman diagrams on $\{1,\ldots,n\}$ is denoted by $\mathcal{F}(n)$, and we have $\mathcal{P}_2(n)\subset \mathcal{F}(n)$. We extend naturally the notion of crossing to $\mathcal{F}(n)$: a quadruplet $1\leq i< j < k < l \leq n$ is called a crossing of $\gamma$ if $\{i,k\}\in \gamma$ and $\{j,l\}\in \gamma$. The number of crossings of a Feynman diagram $\gamma$ is denoted by $\cross(\gamma)$. Similarly, a triplet $1\leq i< j < k \leq n$ is called a gap of $\gamma$ if $\{i,k\}\in \gamma$ and $\{j\}\in \gamma$. The number of gaps of a Feynman diagram $\gamma$ is denoted by $\gap(\gamma)$. Finally, the number of pairings of a Feynman diagram $\gamma$ is denoted by $\sharp \gamma$.

\begin{theorem}[Theorem 3.1 of \cite{EffrosPopa2003}]The Wick product of jointly $q$-Gaussian variables $X_1,\ldots, X_n$ is given by
$$X_1\diamond\cdots\diamond X_n=\sum_{\gamma\in \mathcal{F}(n)}(-1)^{\sharp \gamma}q^{\gap(\gamma)-\cross(\gamma)}\prod_{\{a,b\}\in \gamma}\tau[X_{a}X_{b}]\overrightarrow{\prod_{\{c\}\in \gamma}}X_{c}.$$
\end{theorem}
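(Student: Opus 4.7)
The plan is to proceed by induction on $n$, using the defining recursion of the Wick product. The base cases $n=0$ and $n=1$ are immediate: the only Feynman diagrams are the empty diagram and $\{\{1\}\}$, contributing $1$ and $X_1$ respectively, which match the Wick products.

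For the inductive step, I would expand the right-hand side of the recursion by applying the inductive hypothesis to each Wick sub-product and then match the result against the claimed closed-form sum. The natural parametrization is to group $\gamma\in\mathcal{F}(n)$ by the block of $\gamma$ containing the index $1$: either $\{1\}$ is a singleton of $\gamma$, or $\{1,i\}\in\gamma$ for some $i\in\{2,\ldots,n\}$. The singleton family should correspond to the leading term $X_1\cdot(X_2\diamond\cdots\diamond X_n)$ of the recursion, while for each $i$ the pairing subfamily should correspond to the term with prefactor $q^{i-1}\tau[X_1X_i]$. The singleton case is painless: since $1$ is the smallest index, a singleton at $1$ cannot sit in the middle of any gap triple and cannot participate in a crossing, so $(\sharp,\gap,\cross)$ is preserved by removing $\{1\}$, and the inductive hypothesis for $X_2\diamond\cdots\diamond X_n$ immediately reproduces this family.

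The pairing case is the technical heart of the argument. For fixed $i$, the sign $(-1)^{\sharp\gamma}$ and the factor $\tau[X_1X_i]$ match the recursion directly. The substantive task is to reconcile the $q$-exponent $\gap(\gamma)-\cross(\gamma)$ of the full diagram with the sum of the $q$-statistic from the inductive hypothesis on the restricted Wick product and the uniform prefactor $q^{i-1}$ of the recursion. Inserting the pair $\{1,i\}$ creates extra gaps (one per singleton of the restricted diagram located between positions $1$ and $i$) and extra crossings (one per pair of the restricted diagram straddling $\{1,i\}$); these contributions depend on the specific diagram, whereas $q^{i-1}$ is uniform across the class. The main obstacle is this reconciliation, which I expect to follow from a careful relabeling of the positions to the right of $i$ combined with a telescoping resummation over the class of restricted diagrams. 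Once this combinatorial identity is in hand, the two sides of the recursion agree term by term and the induction closes.
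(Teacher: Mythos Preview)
The paper does not give its own proof of this statement; it is quoted from Effros--Popa. Your inductive strategy via the defining recursion is the natural one (and is essentially how the original argument goes), and you handle the singleton case correctly. But your proposed resolution of the pairing case --- a ``telescoping resummation over the class of restricted diagrams'' --- cannot work. Both sides of the identity are sums over $\mathcal{F}(n)$ whose terms are indexed by distinct formal monomials $\prod_{\{a,b\}}\tau[X_aX_b]\cdot\overrightarrow{\prod}_{\{c\}}X_c$, so the coefficients must agree \emph{diagram by diagram}; there is no room for cancellation across diagrams.

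The obstruction you are running into is in fact a misprint in the paper, not a combinatorial subtlety. With the stated definition of $\gap$ (the middle index of a gap must be a \emph{singleton}), the formula is already false for $n=4$: the recursion yields coefficient $q^{2}$ for $\gamma=\{\{1,4\},\{2,3\}\}$, whereas $\gap(\gamma)-\cross(\gamma)=0-0=0$. The correct definition counts triples $a<j<b$ with $\{a,b\}\in\gamma$ and $j$ an \emph{arbitrary} index in $\{1,\dots,n\}$, paired or not. With this correction, inserting the pair $\{1,i\}$ into $\gamma'\in\mathcal{F}(\{2,\dots,n\}\setminus\{i\})$ adds $(i-2)$ gaps coming from $\{1,i\}$ itself, plus one additional gap for each pair of $\gamma'$ straddling $i$ (since the reinstated index $i$ now lies inside that pair); it also adds exactly one crossing for each straddling pair. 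These straddling contributions cancel in $\gap-\cross$, leaving precisely $i-2$ regardless of $\gamma'$, and the induction closes term by term with no resummation. (Two further misprints to watch for: the exponent in the recursion should be $q^{i-2}$, and the sub-Wick-product on the right should omit $X_1$ as well as $X_i$.)
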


In the following proposition, we sum up some properties of the Wick product which can be found in~\cite{BozejkoKummererSpeicher1997} and in~\cite{EffrosPopa2003} for self-adjoint jointly $q$-Gaussian variables. The general case follows by linearity.

\begin{proposition}\label{propWick}
\begin{enumerate}\item The Wick product is multilinear on the linear span of jointly {$q$-Gaussian} variables.
\item If $X_1,\ldots,X_{n}$ is jointly $q$-Gaussian,
$(X_1\diamond\cdots\diamond X_n)^*=X_n^*\diamond\cdots\diamond X_1^*.$
\item If $X_1,\ldots,X_{n+m}$ is jointly $q$-Gaussian (with $n,m\geq 0$),
$$\tau\Big[(X_1\diamond\cdots\diamond X_n)\cdot(X_{n+1}\diamond\cdots\diamond X_{n+m})\Big]=\delta_{n,m}\sum_{\pi\in \mathcal{P}_2(n,m)}q^{\cross(\pi)}\prod_{\{i,j\}\in \pi}\tau[X_iX_j].$$
\item If $\{X_i\}_{i\in I}$ is a set of jointly $q$-Gaussian variables, the set of Wick products$$\{X_{i(1)}\diamond\cdots\diamond X_{i(n)}\}_{n\geq 0,i(1),\ldots,i(n)\in I}$$is a spanning set of the algebra $\mathbb{C}\langle X_i:i\in I\rangle$ generated by $\{X_i\}_{i\in I}$.
\end{enumerate}
\end{proposition}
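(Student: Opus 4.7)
The plan is to establish items 1 and 4 directly from the defining recursion, and to reduce items 2 and 3 to the self-adjoint case (proved in \cite{BozejkoKummererSpeicher1997,EffrosPopa2003}) by linearising in the variables through the decomposition $X_j=\tfrac{1}{2}(X_j+X_j^*)+i\cdot\tfrac{1}{2i}(X_j-X_j^*)$. Item 1 must be proved first, because multilinearity is exactly what legitimises this reduction for items 2 and 3.

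For item 1, I would induct on $n$. The base cases $n=0,1$ are trivial. In the inductive step, the recursion
$$X_1\diamond\cdots\diamond X_n=X_1\cdot (X_2\diamond\cdots\diamond X_n)-\sum_{i=2}^nq^{i-1}\tau[X_1X_i](X_1\diamond\cdots\diamond\widehat{X_i}\diamond\cdots\diamond X_n)$$
is linear in $X_1$ by linearity of left multiplication and of $\tau[\,\cdot\, X_i]$, and is linear in $X_j$ for $j\geq 2$ by the inductive hypothesis together with linearity of $\tau[X_1\,\cdot\,]$. Item 4 is then proved by a second induction, rewriting the same recursion as $X_1\cdot(X_2\diamond\cdots\diamond X_n)=X_1\diamond\cdots\diamond X_n+(\text{length-}(n{-}1)\text{ Wick terms})$. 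By the inductive hypothesis, $X_2X_3\cdots X_n$ lies in the span of Wick products of length $\leq n-1$, so $X_1 X_2\cdots X_n$ lies in the span of Wick products of length $\leq n$ after one further application of the recursion.

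For item 2 in the self-adjoint case, I would use the Effros--Popa formula. Taking the adjoint commutes with the sum over Feynman diagrams, complex-conjugates each covariance $\tau[X_aX_b]$, and reverses the order of the singleton factors. The reflection $i\mapsto n+1-i$ is a bijection on $\mathcal{F}(n)$ preserving $\sharp\gamma$, $\cross(\gamma)$ and $\gap(\gamma)$; matching Feynman diagrams via this reflection and using the identity $\tau[X_aX_b]=\overline{\tau[X_bX_a]}$ valid for self-adjoint variables recovers the Effros--Popa expansion of $X_n^*\diamond\cdots\diamond X_1^*$ term by term. Item 3 in the self-adjoint case follows analogously: expanding the two Wick products via Effros--Popa, multiplying out and applying the $q$-Wick moment formula to the resulting singleton strings, one sees that only linking pairings $\pi\in\mathcal{P}_2(n,m)$ survive, which forces $n=m$. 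To drop self-adjointness in both items, decompose each $X_j=Y_j+iZ_j$ with $Y_j,Z_j$ jointly $q$-Gaussian and self-adjoint, and use item 1 to expand both sides as $2^n$-term sums of self-adjoint Wick products, to which the arguments above apply.

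The main obstacle will be the bookkeeping in item 2: after linearising, one must verify that the reflection bijection on Feynman diagrams, combined with the complex conjugation of covariances and the phases $i^{a}\mapsto (-i)^{a}$ generated by adjointing the $iZ_j$ factors, matches the expansion of $X_n^*\diamond\cdots\diamond X_1^*$ exactly. No new combinatorial ingredient beyond Effros--Popa is needed, but the invariance of $\cross$ and $\gap$ under reflection and the conjugation rule for $\tau$ must be carried through the $2^n$ linearised terms simultaneously.
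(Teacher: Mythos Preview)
Your approach is correct and matches the paper's: the self-adjoint case is cited from \cite{BozejkoKummererSpeicher1997,EffrosPopa2003}, and the extension to general jointly $q$-Gaussian variables is obtained by multilinearity (item 1) through the real/imaginary decomposition, exactly as you outline. One minor point in your induction for item 1: linearity in $X_1$ also requires the inductive hypothesis, since the correction terms $X_1\diamond\cdots\diamond\widehat{X_i}\diamond\cdots\diamond X_n$ (for $i\geq 2$) still carry $X_1$ in the first slot.
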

\subsection{Mixed $q$-Gaussian Variables}\label{mixedqgaussians}
Let $Q=(q_{ij})_{i,j\in I}$ be a symmetric matrix with elements in $[-1,1]$. We recall now the construction of the mixed $q$-Gaussian variables operators $X_i=c_i+c_i^*$, where $c_i$ satisfy the commutation relations of the form
\begin{equation}
\label{CommutationRelations}
c_i^*c_j = q_{ij} c_j c_i^* + \delta_{ij} 1.
\end{equation}
We consider a complex Hilbert space $K$ with an orthonormal basis $\{e_i\}_{i\in I}$, and the algebraic full Fock space 
$$ \mathcal F(K) =\C\Omega + \bigoplus_{n=1}^\infty (K)^{\tensor n}$$
where $\Omega$ is a unit vector called the vacuum. The set of permutations of $\{1,\ldots,n\}$ is denoted by $\mathfrak{S}_n$, and a pair $1\leq a< b < l \leq n$ is called an inversion of a permutation $\pi \in \mathfrak{S}_n$ if $\pi(a)\geq \pi(b)$. We define the Hermitian form $\langle \cdot, \cdot \rangle_{Q}$ to be the conjugate-linear extension of
\begin{align*}\langle \Omega, \Omega \rangle_{Q}&=1\\
\langle e_{i(1)}\otimes \cdots \otimes e_{i(k)},e_{j(1)}\otimes \cdots \otimes e_{j(\ell)} \rangle_{Q}&=\delta_{k\ell}\sum_{\substack{\pi \in \mathfrak{S}_k\\i=j\circ \pi}}\prod_{\{a,b\}\in inv(\pi)}q_{i(a)i(b)}.
\end{align*}
The $Q$-Fock space $\mathcal F_{Q}(K)$ is the completion of the quotient of $\mathcal F(K)$ by the kernel of $\langle \cdot, \cdot \rangle_{Q}$. For any $i\in I$, define the left creation operator $c_i$ on $\mathcal F_{Q}(K)$ to extend
\begin{align*}c_i(\Omega)&=e_i\\
c_i( e_{i(1)}\otimes \cdots \otimes e_{i(k)})&=e_i\otimes e_{i(1)}\otimes \cdots \otimes e_{i(k)}.
\end{align*}
The annihilation operator is its adjoint, which can be computed as
\begin{align}c_i^*(\Omega)&=0\\
c_i^*( e_{i(1)}\otimes \cdots \otimes e_{i(k)})&=\sum_{\ell=1}^k\delta_{ii(\ell)}q_{ii(1)}\cdots q_{ii(\ell-1)}\cdot e_{i(1)}\otimes \cdots \otimes e_{i(\ell -1)}\otimes e_{i(\ell +1)}\otimes \cdots \otimes e_{i(k)}.\label{defannihilation}
\end{align}
Finally, we define the mixed $q$-Gaussian variables $X_i$  to be $c_i+c_i^*$. We can compute explicitly the mixed moment of those variables with respect to the vector state $\tau[\cdot]=\langle \cdot \Omega, \Omega \rangle_Q$.
\begin{proposition}[Proof of Theorem 4.4 of \cite{BozejkoSpeicher1994}]We have
\begin{equation}
\tau[X_{i(1)}\cdots X_{i(n)}]=\sum_{\pi\in \mathcal{P}_2(n)}\prod_{\{a,b\}\in \cross(\pi)}q_{i(a)i(b)}\prod_{\{a,b\}\in \pi}\delta_{i(a)i(b)}.\label{Wickmixedq}\end{equation}
\end{proposition}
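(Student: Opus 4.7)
The plan is to compute the vacuum expectation $\tau[X_{i(1)}\cdots X_{i(n)}]=\langle X_{i(1)}\cdots X_{i(n)}\Omega,\Omega\rangle_{Q}$ by expanding each factor as $X_{i(k)}=c_{i(k)}+c_{i(k)}^{*}$ and then applying the resulting $2^{n}$ words of creation/annihilation operators to $\Omega$ from right to left, unfolding each $c^{*}$ into a sum over strands via~\eqref{defannihilation}.

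First I would identify which terms contribute. Applied from the right, each $c_{i(k)}$ prepends $e_{i(k)}$ to the current tensor state, while each $c_{i(k)}^{*}$ either returns $0$ or removes one basis vector. For the inner product with $\Omega$ to be nonzero, the partial count (number of creations minus annihilations, read right-to-left) must stay $\geq 0$ and end at $0$; so $n$ must be even and each expanded summand corresponds to a pair partition $\pi\in\mathcal{P}_{2}(n)$: for each $\{a,b\}\in\pi$ with $a<b$ we choose $c_{i(b)}$ at position $b$ and $c_{i(a)}^{*}$ at position $a$, and the sum over targets in~\eqref{defannihilation} at position $a$ specifies the partner $b$.

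Next I would read off the scalar attached to a fixed $\pi$. The Kronecker part of~\eqref{defannihilation} contributes a $\delta_{i(a),i(b)}$ at each pair, yielding the factor $\prod_{\{a,b\}\in\pi}\delta_{i(a),i(b)}$. The $q$-prefactors in~\eqref{defannihilation} contribute one factor $q_{i(a),i(c)}$ for every strand $e_{i(c)}$ lying to the left of $e_{i(b)}$ in the tensor at the moment we process position $a$. Such a strand corresponds to a position $c$ with $a<c<b$ whose partner $c'$ in $\pi$ satisfies $c'<a$, that is, to a pair $\{c',c\}\in\pi$ that crosses the pair $\{a,b\}$; thus crossings of $\pi$ are in bijection with $q$-factors, with weight $q_{i(a),i(c)}$, which by the $\delta$'s equals the symmetric label attached to the crossing in~\eqref{Wickmixedq}.

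Summing over $\pi$ produces~\eqref{Wickmixedq}. The sole real obstacle is the combinatorial dictionary \emph{``strand still present in the tensor, to the left of $e_{i(b)}$''} $\longleftrightarrow$ \emph{``pair of $\pi$ that crosses $\{a,b\}$''}; this is elementary once the stack of strands is tracked along the sequence, and it is the single place where the specific left-prepending convention for creation on the $Q$-Fock space is used.
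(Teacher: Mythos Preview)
Your argument is correct and is precisely the standard combinatorial computation for vacuum expectations on the $Q$-Fock space. The paper does not supply its own proof of this proposition; it simply records the formula and attributes it to the proof of Theorem~4.4 in Bo\.zejko--Speicher~\cite{BozejkoSpeicher1994}, where exactly this expansion of $(c_{i(1)}+c_{i(1)}^*)\cdots(c_{i(n)}+c_{i(n)}^*)\Omega$ and the bookkeeping of pairings versus crossings is carried out. Your write-up matches that argument: the Dyck-path constraint isolates pair partitions, the target choice in each annihilation fixes the partner, the $\delta$'s come from the Kronecker in~\eqref{defannihilation}, and the crossing count emerges from the observation that the strands sitting to the left of $e_{i(b)}$ at the moment position $a$ is processed are exactly those created at some $c$ with $a<c<b$ whose partner $c'$ satisfies $c'<a$, i.e.\ the pairs crossing $\{a,b\}$ with $a$ as the inner left endpoint. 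Each crossing is thus counted once (from the viewpoint of its inner annihilation), and the $\delta$'s make the choice of index in $q_{i(\cdot)i(\cdot)}$ irrelevant, as you note.
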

As a consequence, the distribution of the variable $X_i$ is the $q_{ii}$-Gaussian measure. Let us remark that if all the $q_{ij}$ are equal to a single $q$, the set $\{X_i\}_{i\in I}$ is jointly $q$-Gaussian. Finally, let us mention that it is also possible to define some Wick product for mixed $q$-Gaussian variables: see~\cite{JungeZeng2015,Krolak2000}.
%

\subsection{Random Matrix Model of \'Sniady}\label{modelsniady}
Let $d\geq 0$. We endow $\mathbb{M}_{d}$ by the inner products
$\langle A,B\rangle_1=\frac{1}{d}\Tr(AB^*)$ and 
$\langle A,B\rangle_0=\Tr(A)\Tr(B^*).
$ For all $S\subset \{1,\ldots,N\}$, we define the inner product $\langle \cdot,\cdot\rangle_S$ on $\mathbb{M}_{d^N}\simeq \bigotimes_{r=1}^N\mathbb{M}_{d}$ to be the inner product of the Hilbert space tensor product
$\bigotimes_{r=1}^N\left(\mathbb{M}_{d},\langle A,B\rangle_{\1_S(r)}\right)$. Let $\sigma=(\sigma_S)_{S\subset \{1,\ldots,N\}}$ be a family of real numbers indexed by all subsets of $\{1,\ldots,N\}$. We define the inner product on $\mathbb{M}_{d^N}$ given by
$$\langle A,B\rangle_\sigma=\sum_{S\subset \{1,\ldots,n\}}\sigma_S^2\cdot \langle A,B\rangle_S.$$
In order to be concrete, let us compute the inner product of elementary matrices. Setting $$T_{ij,kl}^{S}=\langle E_{j,i},E_{k,l}\rangle_{\1_S(r)}=\left\{\begin{matrix}
\frac{1}{d}\delta_{il}\delta_{jk} & \text{if }r\in S \\ 
\delta_{ij}\delta_{kl} & \text{if }r\notin S. \\ 
\end{matrix}\right.$$
and, for all $\mathbf{i}=(i_1,\ldots,i_N),\mathbf{j},\mathbf{k},\mathbf{l}\in \{1,\ldots,d\}^N$,
$$\mathbf{T}_{\mathbf{i}\mathbf{j},\mathbf{k}\mathbf{l}}^{S}=\langle E_{\mathbf{j},\mathbf{i}},E_{\mathbf{k},\mathbf{l}}\rangle_S=\prod_{r=1}^NT_{i_rj_r,k_rl_r}^{S},$$
we have, for all $\mathbf{i}=(i_1,\ldots,i_N),\mathbf{j},\mathbf{k},\mathbf{l}\in \{1,\ldots,d\}^N$,
\begin{equation}
\langle E_{\mathbf{j},\mathbf{i}},E_{\mathbf{k},\mathbf{l}}\rangle_\sigma=\sum_{S\subset \{1,\ldots,N\}}\sigma_S^2\mathbf{T}_{\mathbf{i}\mathbf{j},\mathbf{k}\mathbf{l}}^{S}.\label{covariance}
\end{equation}

\begin{theorem}[Theorem 1 of \cite{Sniady2001}]\label{thSniady}Let $\mathcal{X}=\{X_t\}_{t\in T}$ be a set of self-adjoint variables which are jointly  $q$-Gaussian.

For each $N\geq 0$, let $\sigma^{(N)}=(\sigma_S^{(N)})_{S\subset \{1,\ldots,N\}}$ be a family of real numbers, and let $\mathcal{X}^{(N)}=\{X_t^{(N)}\}_{t\in T}$ be the Gaussian stochastic process on $\mathbb{H}_{d^N}$ (indexed by $T$), uniquely defined by the following covariance: for all $M,N\in\mathbb{H}_{d^N}$ and all $s,t\in T$, one has
$$\mathbb{E}\left[\Tr(MX_t^{(N)})\Tr(NX_s^{(N)})\right]=\tau[X_tX_s]\langle M,N\rangle_{\sigma^{(N)}}.$$
In other words, the entries of the matrices in $\mathcal{X}^{(N)}$ are centered Gaussian variables with the following covariance: for all $\mathbf{i},\mathbf{j},\mathbf{k},\mathbf{l}\in \{1,\ldots,d\}^N$ and all $s,t\in T$, one has
$$\mathbb{E}\left[\Tr(E_{\mathbf{j},\mathbf{i}}X_t^{(N)})\Tr(E_{\mathbf{l},\mathbf{k}}X_s^{(N)})\right]=\mathbb{E}\left[\Tr(E_{\mathbf{j},\mathbf{i}}X_t^{(N)})\overline{\Tr(E_{\mathbf{k},\mathbf{l}}X_s^{(N)})}\right]=\tau[X_tX_s]\sum_{S\subset \{1,\ldots,N\}}(\sigma_S^{(N)})^2\mathbf{T}_{\mathbf{i}\mathbf{j},\mathbf{k}\mathbf{l}}^{S}.$$
Under the technical assumptions \ref{A}, \ref{B}, \ref{C} and \ref{D}, $\mathcal{X}^{(N)}$ converges to $\mathcal{X}$ in noncommutative distribution in the following sense: for all $t_1,\ldots,t_n$, we have
$$\lim_{N\to \infty}\mathbb{E}\left[\frac{1}{d^N}\Tr(X_{t_1}^{(N)}\cdots X_{t_n}^{(N)})\right]=\tau[X_{t_1}\cdots X_{t_n}]. $$
\end{theorem}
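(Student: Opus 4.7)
The plan is to reduce both sides to combinatorial sums over pairings and match them term-by-term in the limit $N\to\infty$. By the classical Wick/Isserlis theorem applied to the centered Gaussian random matrices $X_t^{(N)}$,
$$\mathbb{E}\bigl[\tfrac{1}{d^N}\Tr(X_{t_1}^{(N)}\cdots X_{t_n}^{(N)})\bigr]=\tfrac{1}{d^N}\sum_{\mathbf{i}_1,\ldots,\mathbf{i}_n}\sum_{\pi\in\mathcal{P}_2(n)}\prod_{\{a,b\}\in\pi}\mathbb{E}\bigl[(X_{t_a}^{(N)})_{\mathbf{i}_a\mathbf{i}_{a+1}}(X_{t_b}^{(N)})_{\mathbf{i}_b\mathbf{i}_{b+1}}\bigr]$$
(indices read cyclically modulo $n$). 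Substituting the covariance formula in the statement and expanding via~\eqref{covariance}, each edge $\{a,b\}\in\pi$ contributes $\tau[X_{t_a}X_{t_b}]$ times a sum over a subset $S_e\subset\{1,\ldots,N\}$ weighted by $(\sigma^{(N)}_{S_e})^2\prod_{r=1}^N T^{S_e}_{i_{a,r}i_{a+1,r},\,i_{b,r}i_{b+1,r}}$. After pulling out $\prod_{\{a,b\}\in\pi}\tau[X_{t_a}X_{t_b}]$ and swapping the order of summation, what remains is a purely combinatorial quantity depending only on the pairing $\pi$, the parameters $(\sigma^{(N)}_S)_S$, and $d$.

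Next I would exploit the tensor structure and factorize the index sum across the $N$ levels $r\in\{1,\ldots,N\}$. At level $r$, each edge $e=\{a,b\}\in\pi$ carries the bit $\1_{r\in S_e}$, which selects between the two contraction types of $T^{S_e}$: either the \emph{traced} contraction $\tfrac{1}{d}\delta_{i_{a,r}i_{b+1,r}}\delta_{i_{a+1,r}i_{b,r}}$, which cross-identifies the endpoints of $e$ at level~$r$ and carries a factor $1/d$; or the \emph{product} contraction $\delta_{i_{a,r}i_{a+1,r}}\delta_{i_{b,r}i_{b+1,r}}$, which identifies them along the backbone cycle. The level-$r$ sub-sum over the $r$-th coordinates is therefore the classical Gaussian-unitary genus-expansion quantity associated with $\pi$ and the level-$r$ labelling, equal to $d^{F_r}$ for a face-counting quantity $F_r$ built from the ribbon graph whose backbone is the cycle $(1,2,\ldots,n,1)$ and whose chords are the traced edges at level~$r$. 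Combined with the normalization $1/d^N$ and the factors $1/d$ carried by the traced edges, the total level-wise weight is $\prod_r d^{F_r-N-\#\{e\,:\,r\in S_e\}}\leq 1$, with equality precisely when at every level $r$ the traced sub-pairing of~$\pi$ is non-crossing in the sense compatible with the product edges.

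Finally, I would perform the outer sum over the configurations $(S_e)_e$ weighted by $\prod_{e\in\pi}(\sigma^{(N)}_{S_e})^2$. The technical assumptions A--D on $\sigma^{(N)}$ are precisely tuned so that, for every fixed pairing $\pi\in\mathcal{P}_2(n)$, this weighted sum converges as $N\to\infty$ to $q^{\cross(\pi)}$: each crossing of $\pi$ contributes a single multiplicative penalty $q$, encoding the $q$-Gaussian commutation combinatorics at the level of random-matrix covariances. Summing over $\pi$ then reproduces the right-hand side via the $q$-Wick formula~\eqref{Wickq}, giving exactly $\tau[X_{t_1}\cdots X_{t_n}]$. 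The main obstacle is this last step — the large-$N$ concentration argument showing that the level-wise face-weighted sum localizes on configurations producing precisely $q^{\cross(\pi)}$ while nonplanar contributions at each level and tail configurations in $(S_e)$ are suppressed; this is where the averaging, normalization and summability content of assumptions A--D is used essentially. The preceding reductions are routine bookkeeping of Wick expansions and of the tensor structure of $\langle\cdot,\cdot\rangle_{\sigma^{(N)}}$.
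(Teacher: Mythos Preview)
The paper does not supply its own proof of this statement; it is quoted as Theorem~1 of \cite{Sniady2001} and invoked as a black box. So there is nothing in the present paper to compare your argument against.

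That said, your outline is the correct one and is essentially \'Sniady's: Wick expansion of the normalized trace, factorization of the covariance across the $N$ tensor levels of $\mathbb{M}_{d^N}\simeq\mathbb{M}_d^{\otimes N}$, and level-wise face counting on the ribbon graph determined by which edges $e$ of the pairing $\pi$ satisfy $r\in S_e$. Where you explicitly stop is exactly where the substance of \cite{Sniady2001} lies. Concretely: assumption~\ref{A} is the total-mass normalization; \ref{B} and \ref{D} are what kill configurations in which three or more of the subsets $S_e$ share a common coordinate, or in which one $S_e$ has many coordinates not covered by the others, so that asymptotically only the pairwise intersection sizes $|S_e\cap S_{e'}|$ matter; and \ref{C} then fixes the asymptotic joint law of those pairwise intersections so that each crossing $\{e,e'\}$ of $\pi$ contributes an independent factor $\sum_{i\geq 0}p_i/d^{2i}=q$, while non-crossing pairs contribute $\sum_i p_i=1$. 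This is precisely how the limit $q^{\cross(\pi)}$ emerges. Your sketch is sound as a roadmap, but it is not a proof until this concentration and factorization step is carried out; the paper under review never does so, deferring entirely to \cite{Sniady2001}.
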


Before presenting the technical assumptions \ref{A}, \ref{B}, \ref{C} and \ref{D}, let us present two simple examples of family of real numbers $\sigma^{(N)}=(\sigma_S^{(N)})_{S\subset \{1,\ldots,N\}}$ (for $N\geq 0$) fulfilling all assumptions. Those examples are taken from~\cite[Proposition 1 and 2]{Sniady2001}: if $q$ can be written as $q=\exp(c^2/d^2-c^2)$ for a real number $c>0$, the sequence of functions defined by
$$(\sigma_S^{(N)})^2=\left(\frac{c}{\sqrt{N}}\right)^{|S|}\left(1-\frac{c}{\sqrt{N}}\right)^{N-|S|}$$
fulfils the assumptions of Theorem~\ref{thSniady}; if $q$ can be written as $q=\exp(c^2/d^2-c^2)$ for a real number $c>0$, the sequence of functions defined for $N$ sufficiently large by
$$(\sigma_S^{(N)})^2=\left\{\begin{array}{cl}
\frac{1}{\binom{N}{\lfloor c\sqrt{N}\rfloor}} & \text{if }|S|=\lfloor c\sqrt{N}\rfloor \\ 
0 & \text{otherwise}
\end{array}\right.$$
fulfils the assumptions of Theorem~\ref{thSniady}.

\begin{definition}
For each $N\geq 0$, let $\sigma^{(N)}=(\sigma_S^{(N)})_{S\subset \{1,\ldots,N\}}$ be a family of real numbers. The assumptions \ref{A}, \ref{B}, \ref{C} and \ref{D} are given as follow:
\begin{enumerate}[label=\textbf{H.\arabic*},ref=H.\arabic*]
\item \label{A}for each $N\in \mathbb{N}$,
$$ \sum_{S\subset \{1,\ldots,N\}}(\sigma_S^{(N)})^2=1,$$
\item we have\label{B}
$$\lim_{N\to \infty}\sum_{\substack{S_1,S_2,S_3\subset \{1,\ldots,N\}\\ S_1\cap S_2\cap S_3=\emptyset}}(\sigma_{S_1}^{(N)})^2(\sigma_{S_2}^{(N)})^2(\sigma_{S_3}^{(N)})^2=0,$$
\item there exists a sequence $(p_i)_{i\geq 0}$ of nonnegative real numbers such that $\sum_{i\geq 0}p_i=1$, $\sum_{i\geq 0}^\infty p_i/d^{2i}=q$, and such that, for any $k\in \mathbb{N}$ and any nonnegative integers numbers $(n_{ij})_{1\leq i<j\leq k}$, we have\label{C}
$$\lim_{N\to \infty}\sum_{\substack{S_1,\ldots,S_k\subset \{1,\ldots,N\}\\ |S_i\cap S_j|=n_{ij},\text{ for }1\leq i<j\leq k}}(\sigma_{S_1}^{(N)})^2(\sigma_{S_1}^{(N)})^2(\sigma_{S_1}^{(N)})^2=\prod_{1\leq i<j\leq k}p_{n_{ij}},$$
\item for each $k\in \mathbb{N}$,\label{D}
$$\lim_{N\to \infty}\sum_{S_1,\ldots,S_n\subset \{1,\ldots,N\}}\frac{(\sigma_{S_1}^{(N)})^2\cdots(\sigma_{S_k}^{(N)})^2}{N^{2|A_1\setminus (A_2\cup \cdots \cup A_n)|}}=0.$$
\end{enumerate}
\end{definition}
\section{The Two-Parameter $q$-Deformed Segal-Bargmann Transform}
\label{Mainsection}
In this section, we define the $q$-deformed Segal-Bargmann transform $\S_q^{s,t}$ with parameters $s>t/2>0$ and prove Theorem~\ref{theoremtwo}, which reduces to Theorem~\ref{theoremone} when $s=t=1$.
\subsection{An Integral Representation} The integral representation for the one-parameter and the two-parameter cases are similar. The two-parameter case is in fact a generalization of the one-parameter; we separate here simply to make the presentation of the computations clearer.
\paragraph{One-Parameter Case}
\begin{definition}Let $-1< q <1$ and $t\geq 0$. 
We define the $q$-deformed Segal-Bargmann transform $\S_q^t$ by
$$\S_q^t f(z) = \int f(x)\Gamma_q^t(x,z)\nu_q^t(\diff x).$$ 
\end{definition}
Observe that $\S_q^t H_n^t (z)=z^n$ and $\S_q^t$ is injective (by looking at the Fourier expansion of $L^2(\nu_q^t)$).
\begin{remark}
When $t=1$, the transform $\S_q^t$ coincides with the the transform $W$ from \cite{Leeuwen1995}. The method is different; while van Leeuwen and Maassen discovered the integral kernel by solving an eigenvalue equation \cite[Equation (8)]{Leeuwen1995}, we make use of the generating function directly to match the result from the Fock space. The method we present here will give us a two-parameter generalization in later sections.
\end{remark}

\begin{theorem}
\label{qisomorphism}
The transform $\S_q^t$ is a unitary isomorphism between $L^2(\nu_q^t)$ and the reproducing kernel Hilbert space $\H_q^t$ of analytic functions on the disk $B\left(0, \sqrt{\frac{t}{1-q}}\right)$ generated by the positive-definite sesqui-analytic kenel
$$K_q^t(z,\zeta) = \int \Gamma_q^t(x,z)\Gamma_q^t(x,\bar{\zeta})\nu_q^t(\diff x) = \sum_{k=0}^\infty \frac{1}{[k]_q!}\left(\frac{z\zeta}{t}\right)^k.$$
\end{theorem}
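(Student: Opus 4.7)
The plan is to identify explicit orthonormal bases on both sides of $\S_q^t$ and show that $\S_q^t$ matches one to the other. The candidate ONB on the $L^2$-side consists of normalised $q$-Hermite polynomials, and on the RKHS side of normalised monomials.

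The first step is to verify the identity $\S_q^t H_n^{q,t}(z) = z^n$ highlighted just above the statement. Substituting the series expansion
$$\Gamma_q^t(x,z) = \sum_{k \geq 0} \frac{z^k}{[k]_q!\, t^k} H_k^{q,t}(x)$$
into the definition of $\S_q^t$ and interchanging sum and integral (legitimate because $\nu_q^t$ has compact support $[-2\sqrt{t/(1-q)}, 2\sqrt{t/(1-q)}]$ on which the generating series converges uniformly for each fixed $z$ in the open disk $B(0,\sqrt{t/(1-q)})$), the orthogonality relation $\int H_n^{q,t} H_k^{q,t}\, \diff\nu_q^t = \delta_{nk}[n]_q!\, t^n$ immediately yields $\S_q^t H_n^{q,t}(z) = z^n$. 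The same Fubini argument applied to both factors of $\Gamma_q^t$ in the definition of $K_q^t$ gives, after a term-by-term cancellation,
$$\int \Gamma_q^t(x,z)\Gamma_q^t(x,\bar\zeta)\,\nu_q^t(\diff x) = \sum_{k\geq 0} \frac{(z\bar\zeta)^k}{[k]_q!\, t^k},$$
which is the second equality in the theorem under the standard sesqui-analytic convention.

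From here the theorem follows by comparing bases. On the $L^2$-side, $\{H_n^{q,t}/\sqrt{[n]_q!\, t^n}\}_{n\geq 0}$ is orthonormal by the relation above, and it is complete because $\nu_q^t$ has compact support so that polynomials are dense in $L^2(\nu_q^t)$. On the RKHS side, the sesqui-analytic diagonal form $K_q^t(z,\zeta) = \sum_k e_k(z)\overline{e_k(\zeta)}$ with $e_k(z)=z^k/\sqrt{[k]_q!\, t^k}$ identifies $\{e_k\}_{k\geq 0}$ as an orthonormal basis of $\H_q^t$, by the standard dictionary between RKHSs and sums-of-squares expansions of their kernels. Since $\S_q^t$ sends the first ONB to the second, it extends uniquely to a unitary isomorphism $L^2(\nu_q^t)\to\H_q^t$, which in particular also shows that every $\S_q^t f$ is in fact holomorphic on $B(0,\sqrt{t/(1-q)})$.

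The step I expect to require the most care is the Fubini justification in the two displayed computations, since it must be applied both pointwise in $z$ (for the image of each basis element) and jointly in $(z,\zeta)$ (for the kernel). Once the uniform convergence of the generating function on the compact support of $\nu_q^t$ is in place, the remainder of the argument is the bookkeeping $\|H_n^{q,t}\|_{L^2(\nu_q^t)}^2 = [n]_q!\, t^n = \|z^n\|_{\H_q^t}^2$ and the standard RKHS fact recalled above.
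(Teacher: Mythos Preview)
Your argument is correct, but it differs from the paper's route. The paper does not start from an a priori RKHS with kernel $K_q^t$ and then check unitarity by matching orthonormal bases. Instead it \emph{defines} $\H_q^t$ as the image $\S_q^t(L^2(\nu_q^t))$ equipped with the inner product pushed forward via the (injective) map $\S_q^t$, so that unitarity is automatic by construction; it then verifies the reproducing property directly by computing $K_\zeta = \S_q^t(\overline{\Gamma}_\zeta)$ and $\langle F,K_\zeta\rangle_{\H_q^t}=\langle (\S_q^t)^{-1}F,\overline{\Gamma}_\zeta\rangle_{L^2(\nu_q^t)}=F(\zeta)$. Your approach instead takes $\H_q^t$ as the Moore--Aronszajn space associated to $K_q^t$, reads off the orthonormal basis $\{z^k/\sqrt{[k]_q!\,t^k}\}$ from the diagonal kernel expansion, and shows that $\S_q^t$ carries the Hermite ONB to it. Your version is more explicit and makes the norm identity $\|H_n^{q,t}\|^2=[n]_q!\,t^n=\|z^n\|_{\H_q^t}^2$ transparent; the paper's version is shorter and sidesteps both the density-of-polynomials argument and the sum-of-squares RKHS lemma. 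They are equivalent in content.
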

\begin{proof}
Let us denote $\Gamma_q^t(x,z)$ by $\Gamma_z(x)$ for each $z\in B\left(0, \sqrt{\frac{t}{1-q}}\right)$ and $x\in \mathbb{R}$. We also write $K_\zeta(z)=K_q^t(z,\zeta)$ as an analytic function on $B\left(0, \sqrt{\frac{t}{1-q}}\right)$. Observe that 
$$\S_q^t f(\zeta) = \ip{f}{\bar{\Gamma}_\zeta}_{L^2(\nu_q^t)}.$$
Define $\H_q^t \equiv \S_q^t(L^2(\nu_q^t))$ equipped with the inner product
$$\ip{F}{G}_{\H_q} := \ip{(\S_q^t)^{-1}F}{(\S_q^t)^{-1}G}_{L^2(\nu_q^t)}$$
which is well-defined since $\S_q^t$ is injective on $L^2(\nu_q^t)$. By construction $\S_q^t$ is a unitary isomorphism between $L^2(\nu_q^t)$ and $\H_q^t$.
Finally, we see that $K_\zeta(z) = \S_q^t \bar{\Gamma}_{\zeta}(z)$ and, for any $F\in \H_q^t$,
$$\ip{F}{K_\zeta}_{\H_q^t} = \ip{(\S_q^t)^{-1}F}{(\S_q^t)^{-1}K_\zeta}_{L^2(\nu_q^t)}  =  \ip{(\S_q^t)^{-1}F}{h_\zeta}_{L^2(\nu_q^t)} = \S_q^t((\S_q^t)^{-1}F)(\zeta) = F(\zeta)$$
which shows that $K_q^t$ is a reproducing kernel for $\H_q^t$.
\end{proof}

\begin{remark}
Since $\S_q^t$ coincides with $W$ from \cite{Leeuwen1995}, the reproducing kernel Hilbert space $\H_q^t$ actually is equal to the space $H^2(D_q, \mu_q)$ considered in \cite{Leeuwen1995}.
\end{remark}

\paragraph{Analytic Continuation of a Generating Function}
In this subsection, we study the analytic continuation on $y$ to the following generating function
\begin{align*}
\Lambda(r, x, y) = \sum_{k=0}^\infty h_k^{q}(x)h_k^{q}(y)\frac{r^n}{(q)_n} = \frac{(r^2)_\infty}{\prod_{k=0}^\infty (1-4rq^kxy+2r^2q^{2k}(-1+2x+2y)-4r^3q^{3k}xy+r^4q^{4k})}.
\end{align*}
where $x, y\in[-1,1]$, $0<|r|<1$ which is either real or purely imaginary, and $h_k^{q}(x) = H_k^{q,s}(\frac{x}{2}\sqrt{1-q})$.\\

This formula is known as the $q$-Mehler formula and has been studied analytically and combinatorially; see e.g. \cite[Theorem 1.10]{BozejkoKummererSpeicher1997} or \cite[Equation (24)]{Ngo2002}. By a standard theorem (see \cite[Theorem 15.4]{RudinRealComplex}), the analytic continuation on the parameter $y$ of $\Lambda$ is to solve, for a single $0<|r|<1$ and all $x\in [-1,1]$, what $y$ make $1-4rq^kxy+2r^2q^{2k}(-1+2x+2y)-4r^3q^{3k}xy+r^4q^{4k}=0$. The equation $$4r^2q^{2k}y^2-4tq^k x(1+r^2q^{2k})y+r^4q^{4k}+1+2r^2q^{2k}(2x^2-1)=0$$ 
has solution
$$y=\frac{1}{2}\left(\left(\frac{1}{rq^k}+rq^k\right)x \pm i \left(\frac{1}{rq^k}-rq^k\right) \sqrt{1-x^2}\right).$$
It follows that precisely when
\begin{equation}
\begin{split}
\label{AnaContSol}
y&=
\begin{cases}
\frac{1}{2}\left(\left(\frac{1}{|r||q|^k}+|r||q|^k\right)x \pm i \left(\frac{1}{|r||q|^k}-|r||q|^k\right) \sqrt{1-x^2}\right)\;\;\;\text{if }r\in\R\\
\frac{1}{2}\left(\pm  \left(\frac{1}{|r||q|^k}+|r||q|^k\right) \sqrt{1-x^2}\right)+\left(\frac{1}{|r||q|^k}-|r||q|^k\right)ix \;\;\;\text{if }r\in i\R\\
\end{cases}
\end{split}
\end{equation}
for some $x\in[-1,1]$, $\Lambda(r,x,y)$ has a zero for the particular $y$. Denote $\Omega_{k,r}$ the bounded component, which contains $0$, of the complement of the ellipse. Let
$$\varphi_1(u)=\frac{1}{|r||q|^u}+|r||q|^u$$
and 
$$\varphi_2(u)=\frac{1}{|r||q|^u}-|r||q|^u.$$
The derivative $\varphi_1'(u) = (-\log |q|)\left(\frac{1}{|r||q|^u}-|r||q|^u\right)>0$ implies $\varphi_1$ is increasing. Obviously $\varphi_2$ is increasing. Therefore $\Omega_{k,r}$ is increasing. Whence the $y$ parameter in $\Lambda(r,x,y)$ can be analytically continued to the ellipse $\Omega_{0,r}$. \\

\begin{proposition}
\label{AnalyticCont}
The generating function $\Lambda(r, x, z)$
can be analytically continued to $x\in[-1,1]$ and $z\in \Omega_{0,r}$ which is an ellipse with major axis $[-\frac{1}{2}(1/|r|+|r|), \frac{1}{2}(1/|r|+|r|)]$ and minor axis $i[-\frac{1}{2}(1/|r|-|r|), \frac{1}{2}(1/|r|-|r|)]$.
\end{proposition}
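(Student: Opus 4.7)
The plan is to take the explicit $q$-Mehler product expression for $\Lambda(r,x,z)$ as the starting point and to verify that it defines an analytic function of $z$ on the open elliptical region $\Omega_{0,r}$ for each fixed $x\in[-1,1]$. Writing
$$
\Lambda(r,x,z)=\frac{(r^{2})_{\infty}}{\prod_{k=0}^{\infty}P_{k}(x,z)},\qquad P_{k}(x,z):=1-4rq^{k}xz+2r^{2}q^{2k}(-1+2x+2z)-4r^{3}q^{3k}xz+r^{4}q^{4k},
$$
I need two ingredients: that no factor $P_{k}(x,\cdot)$ vanishes on $\Omega_{0,r}$, and that the resulting infinite product converges uniformly on compact subsets of $\Omega_{0,r}$.

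For the first ingredient I will rely on the explicit solution \eqref{AnaContSol} of the quadratic-in-$z$ equation $P_{k}(x,z)=0$, which shows that as $x$ traverses $[-1,1]$ the zeros of $P_{k}(x,\cdot)$ parametrise precisely the ellipse $\partial\Omega_{k,r}$ (with semi-axes $\tfrac{1}{2}\varphi_{1}(k)$ and $\tfrac{1}{2}\varphi_{2}(k)$). The monotonicity of $\varphi_{1}$ and $\varphi_{2}$ on $[0,\infty)$ established just before the statement gives the nesting $\Omega_{0,r}\subset\Omega_{1,r}\subset\cdots$, so $P_{k}(x,z)\neq 0$ for every $k\geq 0$, $x\in[-1,1]$, and $z\in\Omega_{0,r}$; in particular each reciprocal $1/P_{k}(x,z)$ is a well-defined analytic function of $z$ on $\Omega_{0,r}$.

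For the second ingredient I will use the crude bound $|P_{k}(x,z)-1|\leq C(K,r)\,|q|^{k}$ that holds uniformly for $x\in[-1,1]$ and $z$ in any fixed compact $K\subset\Omega_{0,r}$, obtained termwise from the defining formula of $P_{k}$. Since $|q|<1$, this geometric decay forces absolute uniform convergence of $\prod_{k}P_{k}(x,z)$ on $[-1,1]\times K$ to a nowhere-vanishing analytic function of $z$ on $\Omega_{0,r}$, whose reciprocal is therefore analytic on $\Omega_{0,r}$. Multiplying by the prefactor $(r^{2})_{\infty}$ produces the claimed analytic continuation of $\Lambda(r,x,z)$, and agreement with the original power-series definition near $z=0$ follows from the identity theorem (Rudin, \emph{Real and Complex Analysis}, Thm.~15.4) invoked in the text.

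The main delicate point is the nesting $\Omega_{k,r}\subset\Omega_{k+1,r}$: it ultimately rests on the positivity of $\varphi_{2}(u)=1/(|r||q|^{u})-|r||q|^{u}$ for all $u\geq 0$, which uses $|r|<1$ (so that $|r||q|^{u}\leq|r|<1$ on $[0,\infty)$). Once this monotonicity is in hand, the remaining argument is a standard Weierstrass-product-type convergence check, so I expect no further obstacles beyond carefully writing out the uniform bound on $|P_{k}(x,z)-1|$.
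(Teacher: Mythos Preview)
Your proposal is correct and follows essentially the same route as the paper: locate the zeros of each factor $P_k(x,\cdot)$ on the ellipse $\partial\Omega_{k,r}$ via \eqref{AnaContSol}, use the monotonicity of $\varphi_1,\varphi_2$ (which the paper establishes just above the proposition) to obtain the nesting $\Omega_{0,r}\subset\Omega_{1,r}\subset\cdots$, and conclude that the product defines an analytic function on $\Omega_{0,r}$. The only difference is that you make explicit the uniform convergence of the infinite product via the bound $|P_k(x,z)-1|\le C(K,r)|q|^k$, whereas the paper absorbs this into the appeal to the ``standard theorem'' from Rudin; this is a detail the paper leaves implicit rather than a genuinely different argument.
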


\paragraph{Two-Parameter Case}

We intend to define the integral $q$-Segal-Bargmann transform $\S_q^{s,t}$ by
$$\S_q^{s,t} f(z) = \int f(x)\Gamma_q^{s,t}(x,z)\nu_q^s(\diff x)$$
where
\begin{align*}
\Gamma_q^{s,t}(x,z) = &\sum_{k=0}^\infty \frac{H_k^{q,s-t}(z)}{s^k}\frac{H_k^{q,s}(x)}{[n]_q!}\\
=& \sum_{k=0}^\infty (s-t)^{k/2}s^{k/2}\frac{H_k^{q,s-t}(z)}{s^k}\frac{H_k^{q,s}(x)}{[n]_q!}\\
=& \sum_{k=0}^\infty \left(1-\frac{t}{s}\right)^{k/2}\frac{H_k^{q,s-t}(z)H_k^{q,s}(x)}{[n]_q!}.
\end{align*}
By \cite[Theorem 1.10]{BozejkoKummererSpeicher1997},  this series converges for $|x|, |z|\leq 2\sqrt{t}/\sqrt{1-q}$ for real $x, z$.

{\bf Case $s>t$:}\\

It is easy to see that
$$\Gamma_q^{s,t}(x,z) = \Lambda\left(\sqrt{1-\frac{t}{s}}, \frac{x\sqrt{1-q}}{2\sqrt{s}}, \frac{z\sqrt{1-q}}{2\sqrt{s-t}}\right).$$ 
By proposition \ref{AnalyticCont}, $\Gamma_q^{s,t}(x,z)$ is defined as an analytic function on the ellipse $E_{s,t}$ with major axis 
$$\frac{2\sqrt{s-t}}{\sqrt{1-q}}\left(\left(1-\frac{t}{s}\right)^{-\frac{1}{2}}+\left(1-\frac{t}{s}\right)^{\frac{1}{2}}\right)=\frac{2(2s-t)}{\sqrt{s}\sqrt{1-q}}$$
and minor axis 
$$\frac{2\sqrt{s-t}}{\sqrt{1-q}}\left(\left(1-\frac{t}{s}\right)^{-\frac{1}{2}}-\left(1-\frac{t}{s}\right)^{\frac{1}{2}}\right)=\frac{2t}{\sqrt{s}\sqrt{1-q}}.$$

{\bf Case $s<t$:}\\

Similarly,
$$\Gamma_q^{s,t}(x,z) = \Lambda\left(i\sqrt{\frac{t}{s}-1}, \frac{x\sqrt{1-q}}{2\sqrt{s}}, \frac{z\sqrt{1-q}}{2\sqrt{s-t}}\right).$$ 
By proposition \ref{AnalyticCont}, $\Gamma_q^{s,t}(x,z)$ is defined as an analytic function on the ellipse $E_{s,t}$ with major axis on the purely imaginary axis of length
$$\frac{2\sqrt{t-s}}{\sqrt{1-q}}\left(\left(\frac{t}{s}-1\right)^{-\frac{1}{2}}+\left(\frac{t}{s}-1\right)^{\frac{1}{2}}\right) = \frac{2(2s-t)}{\sqrt{s}\sqrt{1-q}}$$
and minor axis on the real axis of length
$$\frac{2\sqrt{t-s}}{\sqrt{1-q}}\left(\left(\frac{t}{s}-1\right)^{-\frac{1}{2}}-\left(\frac{t}{s}-1\right)^{\frac{1}{2}}\right) = \frac{2t}{\sqrt{s}\sqrt{1-q}}.$$
\begin{remark}
When $q=0$, the ellipse coincides with the ellipse where the Brown measure of an elliptic element is distributed; see \cite{BianeLehner2001}.
\end{remark}

\subsection{The Integral Transform}
\begin{definition}Let $-1< q <1$ and $s>t/2> 0$.\label{defSqstgen}
We define the $q$-deformed Segal-Bargmann transform $\S_q^{s,t}$ by
$$\S_q^{s,t} f(z) = \int f(x)\Gamma_q^{s,t}(x,z)\nu_q^s(\diff x)$$
for all $f\in L^2(\nu_q^s)$. $\S_q^{s,t}f$ is an analytic function on the ellipse $E_{s,t}$.
\end{definition}
Observe that $\S_q^{s,t} H_n^s (z)=H_n^{s-t} (z)$. The two-parameter analogue of Theorem~\ref{qisomorphism} holds:
\begin{theorem}
The transform $\S_q^{s,t}$ is a unitary isomorphism between $L^2(\nu_q^s)$ and the reproducing kernel Hilbert space $\H_q^{s,t}$ of analytic functions on the ellipse $E_{s,t}$ generated by the positive-definite sesqui-analytic kenel
$$K_q^{s,t}(z,\zeta) = \int \Gamma_q^{s,t}(x,z)\Gamma_q^{s,t}(x,\bar{\zeta})\nu_q^s(\diff x).$$
\end{theorem}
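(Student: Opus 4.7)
The plan is to mimic the argument for the one-parameter case in Theorem \ref{qisomorphism}. The starting observation, already noted in Definition \ref{defSqstgen}, is that $\mathscr{S}_q^{s,t}H_n^{q,s} = H_n^{q,s-t}$, which is verified by substituting the series expansion of $\Gamma_q^{s,t}(x,z)$ into the integral defining $\mathscr{S}_q^{s,t}$ and invoking the orthogonality relation $\int H_j^{q,s}(x)H_k^{q,s}(x)\,\nu_q^s(\diff x) = \delta_{jk}[k]_q!\,s^k$. Because the $\{H_n^{q,s-t}\}_{n\ge 0}$ are polynomials of pairwise distinct degrees, hence linearly independent, the transform $\mathscr{S}_q^{s,t}$ is injective on $L^2(\nu_q^s)$.

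To promote injectivity to a unitary isomorphism, I define $\mathcal{H}_q^{s,t}:=\mathscr{S}_q^{s,t}(L^2(\nu_q^s))$ and equip it with the pushforward inner product
$$\langle F,G\rangle_{\mathcal{H}_q^{s,t}}:=\langle(\mathscr{S}_q^{s,t})^{-1}F,(\mathscr{S}_q^{s,t})^{-1}G\rangle_{L^2(\nu_q^s)},$$
which makes $\mathscr{S}_q^{s,t}$ a unitary isomorphism by construction. To identify this Hilbert space with the RKHS generated by $K_q^{s,t}$, write $\bar\Gamma_\zeta(x) := \overline{\Gamma_q^{s,t}(x,\zeta)}$; since $x$ is real and the defining series has real coefficients in $z$, one has $\bar\Gamma_\zeta(x) = \Gamma_q^{s,t}(x,\bar\zeta)$. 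The duality identity $\mathscr{S}_q^{s,t}f(\zeta) = \langle f,\bar\Gamma_\zeta\rangle_{L^2(\nu_q^s)}$ then gives, setting $K_\zeta := \mathscr{S}_q^{s,t}(\bar\Gamma_\zeta)$,
$$\langle F,K_\zeta\rangle_{\mathcal{H}_q^{s,t}} = \langle(\mathscr{S}_q^{s,t})^{-1}F,\bar\Gamma_\zeta\rangle_{L^2(\nu_q^s)} = \mathscr{S}_q^{s,t}\bigl((\mathscr{S}_q^{s,t})^{-1}F\bigr)(\zeta) = F(\zeta)$$
for every $F\in\mathcal{H}_q^{s,t}$, so $K_\zeta(z) = K_q^{s,t}(z,\zeta)$ is the reproducing kernel. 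Positive-definiteness of $K_q^{s,t}$ is then automatic since it is the Gram matrix of $\{\bar\Gamma_\zeta\}$ inside $L^2(\nu_q^s)$.

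The only step that genuinely requires care beyond the one-parameter argument is the verification that $\bar\Gamma_\zeta \in L^2(\nu_q^s)$ for every $\zeta$ in the ellipse $E_{s,t}$, which is precisely what validates the duality identity above and makes the integral defining $K_q^{s,t}(z,\zeta)$ converge. By Parseval applied to the $q$-Hermite expansion of $\bar\Gamma_\zeta$, this amounts to proving that $K_q^{s,t}(\zeta,\zeta) = \sum_k |H_k^{q,s-t}(\zeta)|^2 \cdot c_k$ converges on $E_{s,t}$ (with the appropriate coefficients $c_k$ read off from the series form of $\Gamma_q^{s,t}$), and this is exactly the content of the $q$-Mehler analysis and of the analyticity region determined in Proposition \ref{AnalyticCont}. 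Once this is granted, the remaining identifications --- in particular that $\mathcal{H}_q^{s,t}$ coincides with the RKHS of $K_q^{s,t}$ rather than merely admitting it as a reproducing kernel --- follow from the standard observation that the kernel sections $\{K_\zeta:\zeta\in E_{s,t}\}$ span a dense subspace of $\mathcal{H}_q^{s,t}$, since their preimages $\bar\Gamma_\zeta$ contain all polynomials in $x$ via the $q$-Hermite expansion and polynomials are dense in $L^2(\nu_q^s)$.
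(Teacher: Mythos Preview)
Your proof is correct and follows essentially the same route as the paper. The paper in fact gives no separate argument for the two-parameter case: it states the theorem immediately after the one-parameter Theorem~\ref{qisomorphism} and implicitly relies on the same pushforward-inner-product construction, so your write-up is, if anything, more detailed than the paper's --- in particular your explicit attention to the $L^2$-membership of $\bar\Gamma_\zeta$ via Proposition~\ref{AnalyticCont} and to the density of the kernel sections fills in points the paper leaves to the reader.
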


\subsection{Segal-Bargmann Transform and Conditional Expectation}
The goal of this section is to prove Corollary~\ref{Condexpforone}, showing that the $q$-deformed Segal-Bargmann transform can be
written as the action of a "$q$-deformed heat kernel". This result is already known for $q=0$, thanks to \cite[Theorem 3.1]{Cebron2013}.

Recall that the Wick product $X_1\diamond\cdots\diamond X_n$ is orthogonal in $L^2(\mathscr{A},\tau)$ to all products in $X_1,\ldots,X_n$ of degree strictly less than $n$. Since $X_1\cdots X_n-X_1\diamond\cdots\diamond X_n$ is in the span of the products in $X_1,\ldots,X_n$ of degree strictly less than $n$, $X_1\cdots X_n-X_1\diamond\cdots\diamond X_n$ can be seen as the orthogonal projection of $X_1\cdots X_n$ onto the span of the products in $X_1,\ldots,X_n$ of degree strictly less than $n$.
Because the Wick product can be seen as some orthogonal projection, the link with the conditional expectation is not surprising.

\begin{definition}Let $\mathcal{X}$ be a subset of a non-commutative space $(\mathscr{A},\tau)$. The conditional expectation $$\tau[\cdot |\mathcal{X}]:L^2(\mathscr{A},\tau)\to L^2(\mathcal{X},\tau)$$ is the orthogonal projection of $L^2(\mathscr{A},\tau)$ onto $L^2(\mathcal{X},\tau)$.
\end{definition}
\begin{remark}If $(\mathscr{A},\tau)$ is a $W^*$-probability space, that is to say a von Neumann algebra with an appropriate $\tau$, the conditional expectation $\tau[\cdot |\mathcal{X}]$ maps $\mathscr{A}$ into the von Neumann algebra $W^*(\mathcal{X})$ generated by $\mathcal{X}$.
\end{remark}
\begin{proposition}Let $\mathcal{X}$ and $\mathcal{Y}$ be two sets of jointly $q$-Gaussian variables which are $q$-independent. Let $X_1,\ldots,X_n \in \mathcal{X}\cup \mathcal{Y}$. We have
$$\tau\Big[X_1\diamond\cdots\diamond X_n\Big|\mathcal{X}\Big]=0$$
if one of the $X_i$s belongs to $\mathcal{Y}$, and $X_1\diamond\cdots\diamond X_n$ if all $X_i$s are in $\mathcal{X}$.\label{CondExpProp}
\end{proposition}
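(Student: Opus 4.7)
The plan is to split into two cases depending on whether every $X_i$ lies in $\mathcal{X}$ or at least one of them lies in $\mathcal{Y}$. In the first case, $X_1\diamond\cdots\diamond X_n$ is an element of the $*$-algebra generated by $\mathcal{X}$, hence already belongs to $L^2(\mathcal{X},\tau)$; the conditional expectation, being the orthogonal projection onto that subspace, acts as the identity and the claim is immediate.

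The substantive case is when some $X_k\in\mathcal{Y}$. The goal is then to show that $X_1\diamond\cdots\diamond X_n$ is orthogonal to $L^2(\mathcal{X},\tau)$. Using Proposition~\ref{propWick}(4) applied to $\mathcal{X}$, Wick products $Y_1\diamond\cdots\diamond Y_m$ with $Y_j\in\mathcal{X}$ span the algebra generated by $\mathcal{X}$, whose closure is $L^2(\mathcal{X},\tau)$. Hence it suffices to prove that
\[
\tau\bigl[(X_1\diamond\cdots\diamond X_n)^*\,(Y_1\diamond\cdots\diamond Y_m)\bigr]=0
\]
for all $m\geq 0$ and all $Y_1,\ldots,Y_m\in\mathcal{X}$. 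Rewriting the adjoint via Proposition~\ref{propWick}(2) and invoking Proposition~\ref{propWick}(3) (which applies since $\mathcal{X}\cup\mathcal{Y}$ is jointly $q$-Gaussian by the very definition of $q$-independence), this inner product expands as
\[
\delta_{n,m}\sum_{\pi\in\mathcal{P}_2(n,m)}q^{\cross(\pi)}\prod_{\{i,j\}\in\pi}\tau[X_{n+1-i}^*\,Y_{j-n}],
\]
where $\pi$ ranges over pairings matching the first $n$ indices with the last $m$.

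In every such pairing, the slot occupied by $X_k^*$ (with $X_k\in\mathcal{Y}$) is matched with some $Y_{j-n}\in\mathcal{X}$, and the orthogonality clause in the definition of $q$-independence forces $\tau[X_k^*\,Y_{j-n}]=0$; thus every summand vanishes, and the orthogonality follows. The main pitfall I anticipate is bookkeeping around adjoints: one must verify that Proposition~\ref{propWick}(3) applies with entries like $X_i^*$ and that orthogonality between $\mathcal{X}$ and $\mathcal{Y}$ propagates to their adjoints. Both points are consequences of the fact that a jointly $q$-Gaussian family is defined via its self-adjoint real and imaginary parts and then extended by linearity, so once this is unfolded the argument is essentially mechanical.
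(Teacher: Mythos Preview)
Your proof is correct and follows essentially the same approach as the paper: both split into the trivial case where all $X_i\in\mathcal{X}$ and the case where some $X_k\in\mathcal{Y}$, and in the latter reduce to checking orthogonality against Wick products in $\mathcal{X}$ via Proposition~\ref{propWick}, observing that every term in the pairing expansion contains a vanishing covariance $\tau[X_k^{(*)}Y_j]$ between $\mathcal{Y}$ and $\mathcal{X}$. Your write-up is a bit more explicit about which items of Proposition~\ref{propWick} are invoked and about the adjoint bookkeeping, but the argument is the same.
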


\begin{proof}If all $X_i$s are in $\mathcal{X}$, $X_1\diamond\cdots\diamond X_n$ is in $L^2(\mathcal{X},\tau)$ and the conditional expectation does not affect $X_1\diamond\cdots\diamond X_n$. If one of the $X_i$s belongs to $\mathcal{Y}$, it is sufficient to verify that $X_1\diamond\cdots\diamond X_n$ is orthogonal to $L^2(\mathcal{X},\tau)$, and it is an immediate consequence of the following fact: for all $X_{n+1},\ldots,X_{n+m} \in \mathcal{X}$,
$$\tau\Big[(X_1\diamond\cdots\diamond X_n)\cdot(X_{n+1}\diamond\cdots\diamond X_{n+m})^*\Big]=0.$$
Indeed, using of Proposition~\ref{propWick}, the computation of the trace always involves a factor $\tau[X_iX_j^*]$ between a $X_i\in \mathcal{Y}$ and a $X_j\in \mathcal{X}$, which vanishes.
\end{proof}

\begin{corollary}Let $\mathcal{X}=\{X_i\}_{i\in I}$, $\mathcal{Y}=\{Y_j\}_{j\in J}$ and $\mathcal{Z}=\{Z_j\}_{j\in J}$ be three sets of jointly $q$-Gaussian variables which are $q$-independent. The conditional expectations $\tau\left[\cdot|\mathcal{X}\cup\mathcal{Z}\right]:L^2(\mathscr{A},\tau)\to L^2(\mathcal{X}\cup\mathcal{Z},\tau)$ and $\tau\left[\cdot|\mathcal{X}\right]:L^2(\mathscr{A},\tau)\to L^2(\mathcal{X},\tau)$ coincide on $L^2(\mathcal{X}\cup\mathcal{Y},\tau)$.\label{CondExpCor}
\end{corollary}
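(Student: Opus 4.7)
The plan is to reduce the claim to Wick products, where Proposition~\ref{CondExpProp} gives an explicit formula for each conditional expectation. The first preparatory step is to unpack the three-set $q$-independence hypothesis into the two-set formulation of Proposition~\ref{CondExpProp}. Since $\mathcal{X}\cup\mathcal{Y}\cup\mathcal{Z}$ is jointly $q$-Gaussian with the three sets pairwise orthogonal in $L^2(\mathscr{A},\tau)$, any subfamily is jointly $q$-Gaussian; in particular the pair $(\mathcal{X}\cup\mathcal{Z},\mathcal{Y})$ is $q$-independent, and so is the pair $(\mathcal{X},\mathcal{Y}\cup\mathcal{Z})$.

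The second step is a density argument. By Proposition~\ref{propWick}(4) applied to the jointly $q$-Gaussian set $\mathcal{X}\cup\mathcal{Y}$, the Wick products $W=X_{i(1)}\diamond\cdots\diamond X_{i(n)}$ with each $X_{i(k)}\in\mathcal{X}\cup\mathcal{Y}$ span the $*$-algebra generated by $\mathcal{X}\cup\mathcal{Y}$, hence a dense subspace of $L^2(\mathcal{X}\cup\mathcal{Y},\tau)$. Since both conditional expectations are bounded linear operators, it suffices to verify the equality on such $W$. If every factor $X_{i(k)}$ already belongs to $\mathcal{X}$, then $W\in L^2(\mathcal{X},\tau)\subset L^2(\mathcal{X}\cup\mathcal{Z},\tau)$, so both conditional expectations leave $W$ unchanged. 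If, on the other hand, at least one factor lies in $\mathcal{Y}$, then Proposition~\ref{CondExpProp} applied to the $q$-independent pair $(\mathcal{X},\mathcal{Y}\cup\mathcal{Z})$ yields $\tau[W\,|\,\mathcal{X}]=0$, and the same proposition applied to the $q$-independent pair $(\mathcal{X}\cup\mathcal{Z},\mathcal{Y})$ yields $\tau[W\,|\,\mathcal{X}\cup\mathcal{Z}]=0$. In either case the two conditional expectations take the same value on $W$.

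The only point requiring any care, and hardly an obstacle, is the unpacking of the three-set $q$-independence hypothesis into the two-set form demanded by Proposition~\ref{CondExpProp}; everything else is a direct combination of that proposition with the spanning property of Wick products from Proposition~\ref{propWick}(4). No analytic or combinatorial work is needed beyond invoking the two propositions.
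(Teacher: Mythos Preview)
Your proposal is correct and follows essentially the same approach as the paper: the paper's proof is a single sentence invoking Proposition~\ref{CondExpProp} on Wick products and their density via Proposition~\ref{propWick}, while you have simply spelled out the two cases and made explicit how the three-set $q$-independence hypothesis yields the two-set pairs needed to apply Proposition~\ref{CondExpProp}.
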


\begin{proof}Thanks to Proposition~\ref{CondExpProp}, the two conditional expectations coincide on the Wick products of elements in $\mathcal{X}\cup\mathcal{Y}$ which is a dense subset of $L^2(\mathcal{X}\cup\mathcal{Y},\tau)$ (see Proposition~\ref{propWick}).
\end{proof}

\begin{corollary}Let $Z$ be a $(s,t)$-elliptic $q$-Gaussian variable in $(\mathscr{A},\tau)$. If $Y$ is a $(t,0)$-elliptic $q$-Gaussian variable which is $q$-independent from $Z$, we have, for all polynomial $P$,\label{Condexpforone}
$$\mathscr{S}_q^{s,t}P(Z)=\tau\left[P(Y+Z)|Z\right].$$
\end{corollary}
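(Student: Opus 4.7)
The plan is to verify the identity on the linear basis $\{H_n^{q,s}\}_{n\ge 0}$ of $\mathbb{C}[x]$ given by the $q$-Hermite polynomials. Both sides are linear in $P$, and the observation $\mathscr{S}_q^{s,t}(H_n^{q,s}) = H_n^{q,s-t}$ recorded just after Definition~\ref{defSqstgen} identifies $\mathscr{S}_q^{s,t}(H_n^{q,s})(Z)$ with $H_n^{q,s-t}(Z)$. So the task reduces to checking
\begin{equation*}
\tau\!\left[H_n^{q,s}(Y+Z)\,\big|\,Z\right] = H_n^{q,s-t}(Z),\qquad n\ge 0.
\end{equation*}

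The key bridge to Wick products is the identification $H_n^{q,r}(W) = W^{\diamond n}$, valid for any $q$-Gaussian variable $W$ with $r=\tau[W^2]$. I would verify this by induction on $n$, since both $W^{\diamond n}$ and $H_n^{q,r}(W)$ satisfy the same three-term recursion $W^{\diamond(n+1)} = W\cdot W^{\diamond n} - r[n]_q W^{\diamond(n-1)}$ (the Wick side coming from its defining recursion after summing the geometric series $1+q+\cdots+q^{n-1} = [n]_q$). I then apply this identification twice. Under the $(s,t)$-elliptic convention that $\Re Z$ and $\Im Z$ are orthogonal self-adjoint $q$-Gaussians of respective variances $s$ and $t$, a short computation gives $\tau[Z^2]=s-t$, so $H_n^{q,s-t}(Z) = Z^{\diamond n}$. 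Using $q$-independence of $Y$ and $Z$ (which forces $\tau[YZ]=0$ by orthogonality of $Y$ with $\Re Z$ and $\Im Z$ in $L^2$) together with $\tau[Y^2]=t$, one obtains $\tau[(Y+Z)^2] = t + (s-t) = s$, so $H_n^{q,s}(Y+Z) = (Y+Z)^{\diamond n}$.

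It then suffices to expand the Wick power by multilinearity (Proposition~\ref{propWick}(1)):
\begin{equation*}
(Y+Z)^{\diamond n} \;=\; \sum_{\epsilon \in \{Y,Z\}^n} \epsilon_1 \diamond \cdots \diamond \epsilon_n,
\end{equation*}
and to apply $\tau[\,\cdot\,|Z]$. Proposition~\ref{CondExpProp}, applied to $\mathcal{X}=\{Z,Z^*\}$ and $\mathcal{Y}=\{Y\}$ and extended by linearity as in the paper's convention for non-self-adjoint jointly $q$-Gaussian variables, annihilates every word $\epsilon_1\diamond\cdots\diamond\epsilon_n$ in which some $\epsilon_i$ equals $Y$, since such a word lies in the orthogonal complement of $L^2(\mathcal{X},\tau)$. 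Only the pure word $Z\diamond\cdots\diamond Z = Z^{\diamond n}$ survives, giving $\tau[(Y+Z)^{\diamond n}\,|\,Z] = Z^{\diamond n} = H_n^{q,s-t}(Z)$ as required.

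The main obstacle I anticipate is the bookkeeping around the non-self-adjointness of $Z$: verifying $\tau[(Y+Z)^2]=s$ relies on interpreting $(s,t)$-ellipticity as orthogonality of real and imaginary parts in $L^2$, and applying Proposition~\ref{CondExpProp} requires the linear extension of its self-adjoint statement to a complex $q$-Gaussian by separating $Z$ from $Z^*$ in the conditioning algebra. Once these points are spelled out, the argument is essentially three lines.
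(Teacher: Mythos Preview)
Your proposal is correct and follows essentially the same route as the paper's proof: reduce to the Hermite basis, identify $H_n^{q,r}(W)=W^{\diamond n}$ via the shared three-term recursion, compute $\tau[Z^2]=s-t$ and $\tau[(Y+Z)^2]=s$, and then apply Proposition~\ref{CondExpProp} (after expanding $(Y+Z)^{\diamond n}$ by multilinearity) to obtain $\tau[(Y+Z)^{\diamond n}\,|\,Z]=Z^{\diamond n}$. The paper's writeup compresses your multilinearity expansion into a single invocation of Proposition~\ref{CondExpProp}, but the argument is the same.
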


\begin{proof}It suffices to prove the theorem for the Hermite polynomials $\{H^{q,s}_n\}_{n\geq 0}$. Because $\mathscr{S}_q^{s,t}H^{q,s}_n=H^{q,s-t}_n$ for all $n\geq 0$, we need to prove that, for all $n\geq 0$,
$$\tau\left[H^{q,s}_n(Y+Z)|Z\right]=H^{q,s-t}_n(Z).$$
We compute
$$\tau[Z^2]=s-t\ \text{ and }\ \tau[(Y+Z)^2]=\tau[Z^2]+\tau[Y^2]=(s-t)+t=s,$$
and we deduce the following equalities by induction:
$$H^{q,s-t}_n(Z)=Z^{\diamond n}\ \text{ and }\  H^{q,s}_n(Y+Z)=(Y+Z)^{\diamond n}.$$
Let us conclude by the following computation where we use Proposition~\ref{CondExpProp}:
$$
 \tau\left[H^{q,s}_n(Y+Z)|Z\right]= \tau\left[(Y+Z)^{\diamond n}|Z\right]=Z^{\diamond n}=H^{q,s-t}_n(Z).
$$
\end{proof}

\subsection{Random Matrix Model}\label{RandomMatrixModel}
Let $\gamma_{d^N}^{\sigma,t}$ be the Gaussian measure on $\mathbb{H}_{d^N}$ whose characteristic function is given by
$$\int_{\mathbb{H}_{d^N}} \exp(i\Tr(MX))\diff \gamma_{d^N}^\sigma(X)=\exp(-t\|M\|^2_\sigma/2).$$
The measure $\gamma_{d^N}^\sigma$ is supported on the following vector subspace
$$K_\sigma=\{X\in \mathbb{H}_{d^N}:\Tr(MX)=0 \text{ for all } M\in \mathbb{H}_{d^N} \text{ such that } \|M\|^2_\sigma=0 \}.$$
In particular, if $\|\cdot\|^2_\sigma$ is not faithful, $\gamma_{d^N}^{\sigma,t}$ is not absolutely continuous with respect to the Lebesgue measure on $\mathbb{H}_{d^N}$. However, $\gamma_{d^N}^{\sigma,t}$  is absolutely continuous with respect to the Lebesgue measure on the vector space $K_\sigma$. More precisely, using the Riesz representation theorem, let us define the linear map $\phi:K_\sigma\to K_\sigma$ to be the unique linear map such that, for all $x,y\in K_\sigma$,
$\Tr(xy)=\langle \phi(x),y\rangle_\sigma.$
With respect to the Lebesgue measure on the vector space $K_\sigma$, the measure $\gamma_{d^N}^{\sigma,t}$ has density proportional to
$$\exp\left(-\frac{1}{2t}\Tr(x\phi(x))\right)=\exp\left(-\frac{1}{2t}\|\phi(x)\|_\sigma^2\right).$$
The quantity $\|\phi(x)\|_\sigma$ is known as the \emph{Mahalanobis distance} from $x$ to $0$, and it is the norm of $K_\sigma$ for which $\gamma_{d^N}^{\sigma,1}$ is the standard Gaussian measure.

We follows now Section~\ref{fistpov} in order to define the Segal-Bargmann transform $\mathscr{S}^{s,t}$ on $L^2(K_\sigma,\gamma_{d^N}^{\sigma,s})$. First, we consider the Gaussian measure $\mu_{d^N}^{\sigma,r,s}$ on $K_\sigma+i K_\sigma$ which is given by $\gamma_{d^N}^{\sigma,r}\otimes \gamma_{d^N}^{\sigma,s}$ when identifying $K_\sigma+i K_\sigma$ with $K_\sigma\times K_\sigma$. A short computation shows that $\mu_{d^N}^{\sigma,r,s}$ is the Gaussian measure on $\mathbb{M}_{d^N}$ whose characteristic function is given by
$$\int_{\mathbb{M}_{d^N}} \exp(i\Tr(MX^*))\diff \mu_{d^N}^\sigma(X)=\exp(-r\|\Re M\|^2_\sigma/2-s\|\Im M\|^2_\sigma/2).$$
The Segal-Bargmann transform $$\mathscr{S}^{s,t}:L^2(\mathbb{H}_{d^N},\gamma_{d^N}^{\sigma,s})=L^2(K_\sigma,\gamma_{d^N}^{\sigma,s})\to \mathcal{H}L^2(K_\sigma+iK_\sigma,\mu_{d^N}^{\sigma,s-t/2,t/2})=\mathcal{H}L^2(\mathbb{M}_{d^N},\mu_{d^N}^{\sigma,s-t/2,t/2})$$ is well-defined as in~\eqref{SBeqdeux}.

Following the model of Biane, we consider the two following Hilbert space tensor products$$L^2(\mathbb{H}_{d^N},\gamma_{d^N}^{\sigma,s};\mathbb{M}_{d^N})=L^2(\mathbb{H}_{d^N},\gamma_{d^N}^{\sigma,s})\otimes \mathbb{M}_{d^N}$$ and $$\mathcal{H}L^2(\mathbb{M}_{d^N},\mu_{d^N}^{\sigma,s-t/2,t/2};\mathbb{M}_{d^N})=\mathcal{H}L^2(\mathbb{M}_{d^N},\mu_{d^N}^{\sigma,s-t/2,t/2})\otimes \mathbb{M}_{d^N},$$ where $\mathbb{M}_{d^N}$ is endowed with the norm $\|M\|^2_{\mathbb{M}_{d^N}}=\Tr(MM^*)/d^N$. Finally, we consider the boosted Segal-Bargmann transform
$$\mathscrbf{S}_{d^N}^{s,t}=\mathscr{S}^{s,t}\otimes Id_{\mathbb{M}_{d^N}}:L^2(\mathbb{H}_{d^N},\gamma_{d^N}^\sigma;\mathbb{M}_{d^N})\to \mathcal{H}L^2(\mathbb{M}_{d^N},\mu_{d^N}^\sigma;\mathbb{M}_{d^N}).$$

\begin{theorem}\label{theoremtwo}Let $0\leq q < 1$. Assuming \eqref{A}, \eqref{B}, \eqref{C} and \eqref{D} on $\sigma$ ensures that the Segal-Bargmann transform $\mathscrbf{S}_{d^N}^{s,t}$ converges to the $q$-deformed Segal-Bargmann transform $\mathscr{S}_q^{s,t}$ in the following sense: for all polynomial $P$, we have
$$\lim_{N\to \infty}\left\|\mathscrbf{S}_{d^N}^{s,t}(P)-\mathscr{S}_q^{s,t}P\right\|_{\mathcal{H}L^2(\mathbb{M}_{d^N},\mu_{d^N}^{\sigma,s-t/2,t/2};\mathbb{M}_{d^N})}=0.$$
\end{theorem}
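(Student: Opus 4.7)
The plan is to realize both $\mathscrbf{S}_{d^N}^{s,t}(P)$ and $\mathscr{S}_q^{s,t}(P)$ as averages of $P$ over an auxiliary independent Gaussian-type perturbation, and then to transfer the resulting identity from the matrix level to the noncommutative level via Theorem~\ref{thSniady}. On the matrix side, because $\gamma_{d^N}^{\sigma,t}$ is symmetric under $Y\mapsto -Y$, the convolution definition of the classical Segal--Bargmann transform gives, for any polynomial $P$ and for $Z^{(N)}\sim\mu_{d^N}^{\sigma,s-t/2,t/2}$ and $Y^{(N)}\sim\gamma_{d^N}^{\sigma,t}$ independent,
$$\bigl(\mathscrbf{S}_{d^N}^{s,t}P\bigr)(Z^{(N)})=\mathbb{E}\bigl[P(Z^{(N)}+Y^{(N)})\,\big|\,Z^{(N)}\bigr].$$
On the noncommutative side, Corollary~\ref{Condexpforone}---whose proof only uses $\tau[Z^2]=s-t$ and $\tau[Y^2]=t$, and therefore applies equally well to the $(s-t/2,t/2)$-elliptic variable we need---yields $\mathscr{S}_q^{s,t}(P)(Z)=\tau[P(Y+Z)\mid Z]$ for a $(s-t/2,t/2)$-elliptic $q$-Gaussian $Z$ and an independent variance-$t$ self-adjoint $q$-Gaussian $Y$ that is $q$-independent from $Z$.

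I would then expand
$$\|\mathscrbf{S}_{d^N}^{s,t}(P)-\mathscr{S}_q^{s,t}(P)\|^2 = \|\mathscrbf{S}_{d^N}^{s,t}(P)\|^2 - 2\Re\,\bigl\langle\mathscrbf{S}_{d^N}^{s,t}(P),\mathscr{S}_q^{s,t}(P)\bigr\rangle + \|\mathscr{S}_q^{s,t}(P)\|^2$$
and handle each piece using Theorem~\ref{thSniady}. By the matrix Segal--Bargmann isometry, the first term equals $\|P\|^2_{L^2(\mathbb{H}_{d^N},\gamma_{d^N}^{\sigma,s};\mathbb{M}_{d^N})}$; Theorem~\ref{thSniady} for a single variance-$s$ self-adjoint Gaussian matrix sends it to $\|P\|^2_{L^2(\nu_q^s)}$. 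The third is $\mathbb{E}[\tfrac{1}{d^N}\Tr|\mathscr{S}_q^{s,t}(P)(Z^{(N)})|^2]$, a normalized trace of a polynomial in the entries of $\Re Z^{(N)}$ and $\Im Z^{(N)}$; Theorem~\ref{thSniady} applied to the pair $\{\Re Z^{(N)},\Im Z^{(N)}\}$---two independent Gaussian matrices built from the same $\sigma^{(N)}$, whose joint limit is two $q$-independent self-adjoint $q$-Gaussians of variances $s-t/2$ and $t/2$---identifies the limit as $\tau[|\mathscr{S}_q^{s,t}(P)(Z)|^2]$. For the cross term, the tower property collapses the inner conditional expectation to
$$\mathbb{E}\Bigl[\tfrac{1}{d^N}\Tr\bigl(P(Z^{(N)}+Y^{(N)})\,\mathscr{S}_q^{s,t}(P)(Z^{(N)})^{*}\bigr)\Bigr],$$
and Theorem~\ref{thSniady} applied to the jointly Gaussian triple $\{\Re Z^{(N)},\Im Z^{(N)},Y^{(N)}\}$ sends it to $\tau[P(Y+Z)\,\mathscr{S}_q^{s,t}(P)(Z)^{*}]$; since $\mathscr{S}_q^{s,t}(P)(Z)^{*}$ lies in $L^2(\{Z\},\tau)$, the tracial definition of the noncommutative conditional expectation together with Corollary~\ref{Condexpforone} gives $\tau[\mathscr{S}_q^{s,t}(P)(Z)\,\mathscr{S}_q^{s,t}(P)(Z)^{*}]=\tau[|\mathscr{S}_q^{s,t}(P)(Z)|^2]$.

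Combining the three limits, the difference tends to $\|P\|^2_{L^2(\nu_q^s)}-\tau[|\mathscr{S}_q^{s,t}(P)(Z)|^2]$, so the theorem reduces to the algebraic identity $\|P\|^2_{L^2(\nu_q^s)}=\tau[|\mathscr{S}_q^{s,t}(P)(Z)|^2]$. I would verify this by sesquilinearity on the $q$-Hermite basis: for $P=H_n^{q,s}$ we have $\mathscr{S}_q^{s,t}(P)(Z)=H_n^{q,s-t}(Z)=Z^{\diamond n}$ since $\tau[Z^2]=s-t$, and Proposition~\ref{propWick}(3) together with $\tau[ZZ^{*}]=(s-t/2)+(t/2)=s$ computes $\tau[Z^{\diamond n}(Z^{*})^{\diamond n}]=s^n[n]_q!=\|H_n^{q,s}\|^2_{L^2(\nu_q^s)}$; cross terms for $m\neq n$ vanish on both sides by orthogonality of the $q$-Hermite polynomials in $L^2(\nu_q^s)$ and by the $\delta_{n,m}$ factor in Proposition~\ref{propWick}(3). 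The main anticipated obstacle is organizational rather than conceptual: one must ensure that each of the three expectations is unambiguously expressed as a normalized trace of a polynomial in the entries of the joint Gaussian matrix family $(\Re Z^{(N)},\Im Z^{(N)},Y^{(N)})$ with the correct covariance structure, so that Theorem~\ref{thSniady} does identify the limiting noncommutative triple with the required ellipticity parameters and $q$-independence relations.
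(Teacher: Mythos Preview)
Your argument is correct and uses the same core ingredients as the paper---the conditional-expectation realizations of both transforms (Corollary~\ref{Condexpforone} on the $q$-side, the convolution formula on the matrix side) and the convergence in noncommutative distribution from Theorem~\ref{thSniady}---but the organization differs in one notable way.

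The paper does not expand the square into three terms. Instead it introduces \emph{two} independent copies $X^{(N)},Y^{(N)}\sim\gamma_{d^N}^{\sigma,t}$ of the perturbation and writes the entire squared norm as the single mixed expectation
\[
\mathbb{E}\Bigl[\tfrac{1}{d^N}\Tr\bigl((P(Z^{(N)}+X^{(N)})-Q(Z^{(N)}))(P(Z^{(N)}+Y^{(N)})-Q(Z^{(N)}))^{*}\bigr)\Bigr],
\]
which is a polynomial trace in the joint Gaussian family $(X^{(N)},Y^{(N)},\Re Z^{(N)},\Im Z^{(N)})$. One application of Theorem~\ref{thSniady} then gives convergence to $\tau[(P(Z+X)-Q(Z))(P(Z+Y)-Q(Z))^{*}]$ with $X,Y,Z$ jointly $q$-Gaussian and $q$-independent, and this is shown to vanish directly via $Q(Z)=\tau[P(Z+X)\mid Z,Y]$ (using Corollary~\ref{CondExpCor} to enlarge the conditioning). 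This decoupling trick bypasses your final step of independently verifying the isometry $\|P\|^{2}_{L^{2}(\nu_q^s)}=\tau[|\mathscr{S}_q^{s,t}(P)(Z)|^{2}]$ on the Hermite basis: in the paper's organization that identity is never needed separately, since the limit is zero by construction rather than by cancellation of two equal quantities. Your route is perfectly valid, just slightly longer; the duplication device is worth noting as it recurs in the proofs of Theorems~\ref{theoremthree} and~\ref{theoremfour}.
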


\begin{proof}Let us denote by $Q$ the polynomial $\mathscr{S}_q^{s,t}P$.

For all $z\in \mathbb{H}_{d^N}$, we have
\begin{align*}
(\mathscrbf{S}_{d^N}^{s,t}(P))(z)=\int_{\mathbb{H}_{d^N}}P(z-x)\diff \gamma^{\sigma,t}(x).
\end{align*}
Because both side are analytic in $z$, the equality is valid for all $z\in \mathbb{M}_{d^N}$. Thus we can compute
\begin{align*}
&\left\|\mathscrbf{S}_{d^N}^{s,t}(P)-Q\right\|_{\mathcal{H}L^2(\mathbb{M}_{d^N},\mu_{d^N}^{\sigma,s-t/2,t/2};\mathbb{M}_{d^N})}\\
&=\int_{\mathbb{M}_{d^N}}\int_{\mathbb{H}_{d^N}}\int_{\mathbb{H}_{d^N}}\left(P(z-x)-Q(z)\right)(P(z-y)-Q(z))^*\diff \gamma^{\sigma,t}(x)\diff \gamma^{\sigma,t}(y)\diff \mu^{\sigma,s-t/2,t/2}(z).
\end{align*}
Considering three independent random matrices $X^{(N)},Y^{(N)}$ and $Z^{(N)}$ of respective distribution $\gamma^{\sigma,t}$, $\gamma^{\sigma,t}$ and $\mu^{\sigma,s-t/2,t/2}$, we can rewrite
\begin{multline*}\left\|\mathscrbf{S}_{d^N}^{s,t}(P)-Q\right\|_{\mathcal{H}L^2(\mathbb{M}_{d^N},\mu_{d^N}^{\sigma,s-t/2,t/2};\mathbb{M}_{d^N})}\\=\mathbb{E}\left[(P(Z^{(N)}+X^{(N)})-Q(Z^{(N)}))(P(Z^{(N)}+Y^{(N)})-Q(Z^{(N)}))^*\right].
\end{multline*}
Let $X,Y$ be two $(t,0)$-elliptic $q$-Gaussian random variables and $Z$ be a $(s-t/2,t/2)$-elliptic $q$-Gaussian random variable such that $X,Y$ and $Z$ are $q$-independent. Remark that, for any random Hermitian matrix $X^{(N)}$ distributed according to $\gamma_{d^N}^{\sigma,t}$, for all $M,N\in\mathbb{H}_{d^N}$, one has
$$\mathbb{E}\left[\Tr(MX^{(N)})\Tr(NX^{(N)})\right]=t\langle M,N\rangle_{\sigma}.$$
Moreover, for any random matrix $Z$ distributed according to $\mu_{d^N}^{\sigma,s-t/2,t/2}$, $\Re Z$ and $\Im Z$ are two independent Hermitian random matrices distributed according to $\gamma_{d^N}^{\sigma,s-t/2}$ and $\gamma_{d^N}^{\sigma,t/2}$. Thus, we can apply Theorem~\ref{thSniady} which says that the Hermitian random matrices $X^{(N)},Y^{(N)},\Re Z^{(N)}$ and $\Im Z^{(N)}$ converge in noncommutative distribution to $X,Y,\Re Z$ and $\Im Z$. In particular, we have the following convergence:
\begin{multline*}\lim_{N\to \infty}\mathbb{E}\left[(P(Z^{(N)}+X^{(N)})-Q(Z^{(N)}))(P(Z^{(N)}+Y^{(N)})-Q(Z^{(N)}))^*\right]\\= \tau\left[(P(Z+X)-Q(Z))(P(Z+Y)-Q(Z))^*\right].\end{multline*}
From Corollary~\ref{CondExpProp} and Corollary~\ref{Condexpforone}, we know that $$Q(Z)=\mathscr{S}_q^{s,t}P(Z)=\tau[P(Z+X)|Z]=\tau[P(Z+X)|Z,Y].$$
Thus the limit $\tau\left[(P(Z+X)-Q(Z))(P(Z+Y)-Q(Z))^*\right]$ of $\left\|\mathscrbf{S}_{d^N}^{s,t}(P)-Q\right\|_{\mathcal{H}L^2(\mathbb{M}_{d^N},\mu_{d^N}^{\sigma,s-t/2,t/2};\mathbb{M}_{d^N})}$ vanishes:
\begin{align*}\tau\left[(P(Z+X)-Q(Z))(P(Z+Y)-Q(Z))^*\right]&=\tau\left[(P(Z+X)-\tau[P(Z+X)|Z,Y])(P(Z+Y)-Q(Z))^*\right]\\
&=\tau\left[(P(Z+X)-P(Z+X))(P(Z+Y)-Q(Z))^*\right]\\
&=0.
\end{align*}
\end{proof}

\section{Multidimensional $q$-Segal-Bargmann Transform}\label{Multisection}In this section, we will extend the definition of the $q$-Segal-Bargmann transform $\mathscr{S}_q^{s,t}$ to a multidimensional setting, and prove Theorem~\ref{theoremthree}, which says that Theorem~\ref{theoremtwo} is also true in this new setting. In order to understand the multidimensional case for $-1\leq q \leq 1$, we decide first to explain the infinite-dimensional case for the classical Segal-Bargmann transform.
\subsection{Classical Segal-Bargmann Transform in the Infinite-Dimensional Case}The content of this section is entirely expository. In Section \ref{sectionSB}, we shall define  a version of the Segal-Bargmann transform in a probabilistic framework which allows to consider infinite-dimensional Hilbert spaces. In Section \ref{SecSegalWick} and \ref{SecSegalCond}, we give two alternative descriptions of the Segal-Bargmann transform which are adapted to consider $q$-deformations.

\paragraph{In a probabilistic framework}In order to consider the $q$-deformation of this Segal-Bargmann transform, it is convenient to have a version of the $L^2$-spaces with more probabilistic flavor. Let $h\in H$. The continuous linear functional $\langle  \cdot ,h \rangle\in H^*$ can be considered as a random variable defined on the probability space $(H,\mathcal{B},\gamma_s)$ (where $\mathcal{B}$ is the Borel $\sigma$-field of $H$). Let us denote by $\mathbf{X}(h)$ the linear functional $x\mapsto \langle x,h\rangle$ defined on $H$ and by $\mathbf{Z}(h)$ the linear functional $z\mapsto \langle z,h\rangle$ defined on $H^{\mathbb{C}}$.
Because $H$ is finite-dimensional, the $\sigma$-field generated by the random variables $(\mathbf{X}(h))_{h\in H}$ is the Borel $\sigma$-field $\mathcal{B}$ of $H$. Denoting by $L^2(\mathbf{X})$ the random variables of $L^2(H,\mathcal{B},\gamma_s)$ which are measurable with respect to the $\sigma$-field generated by the random variables $(\mathbf{X}(h))_{h\in H}$, we have $L^2(\mathbf{X})=L^2(H,\mathcal{B},\gamma_s)$. Furthermore, it is well-known that the density in $L^2(H,\mathcal{B},\gamma_s)$ of the algebra $\mathbb{C}[\mathbf{X}(h):h\in H]$ of polynomial variable follows from H\"{o}lder inequality. Finally, the three following Hilbert spaces are identical:
$$L^2(\mathbf{X})=\overline{\mathbb{C}[\mathbf{X}(h):h\in H]}^{L^2(H,\mathcal{B},\gamma_s)}=L^2(H,\mathcal{B},\gamma_s).$$
In the same way, 
denoting by $\mathcal{H}L^2(\mathbf{Z})$ the completion of the algebra of random variables $\mathbb{C}[\mathbf{Z}(h):h\in H]$ in $L^2(H^{\mathbb{C}},\mathcal{B},\gamma_{s-t/2,t/2}
)$ we have the equality between the three following Hilbert spaces (where the first equality is a definition):
$$\mathcal{H}L^2(\mathbf{Z})=\overline{\mathbb{C}[\mathbf{Z}(h):h\in H]}^{L^2(H^{\mathbb{C}},\mathcal{B},\gamma_{s-t/2,t/2})}=\mathcal{H}L^2(H^{\mathbb{C}},\mathcal{B},\gamma_{s-t/2,t/2}).$$
The Segal-Bargmann map~\eqref{SBeqdeux} can now be seen as an isomorphism between two spaces of random variables $$\mathscr{S}^{s,t}:L^2(\mathbf{X})\to \mathcal{H}L^2(\mathbf{Z}).$$
From the definition~\ref{defconv}, the action of $\mathscr{S}^{s,t}$ on $\mathbb{C}[\mathbf{X}(h):h\in H]$ is easily described in the following way. The Hermite polynomials of parameter $s$ are defined by $H_0^s(x) = 1$, $H_1^s(x) = x$ and the recurrence relation
$xH_n^s(x) = H_{n+1}^s(x)+n s H_{n-1}^s(x).$ If $h_1,\ldots,h_k$ is an orthonormal family of $H$, the Hermite polynomials $H_{n_1}^s(\mathbf{X}(h_1))\cdots H_{n_k}^s(\mathbf{X}(h_k))$ form an orthonormal family of $L^2(\mathbf{X})$ and the action of $\mathscr{S}^{s,t}$ on this basis is\label{SBproba}
\begin{equation}
\mathscr{S}^{s,t}:H_{n_1}^s(\mathbf{X}(h_1))\cdots H_{n_k}^s(\mathbf{X}(h_k))\mapsto H_{n_1}^{s-t}(\mathbf{Z}(h_1))\cdots H_{n_k}^{s-t}(\mathbf{Z}(h_k)).\label{actiononHermite}
\end{equation}
The formula \eqref{actiononHermite} determines $\mathscr{S}^{s,t}$ on $\mathbb{C}[\mathbf{X}(h):h\in H]$ by linearity, and thus \eqref{actiononHermite} determines uniquely $\mathscr{S}^{s,t}$ on $L^2(\mathbf{X})$ by continuity.

\paragraph{In the infinite dimensional case}\label{sectionSB}
The first approach of Section~\ref{fistpov} can not extend directly to the infinite-dimensional setting because the Gaussian measures $\gamma_s$ do not make sense as measures on an infinite-dimensional Hilbert space. The dual point of view of Section~\ref{SBproba} allows to define the Segal-Bargmann transform on infinite-dimensional Hilbert spaces. Indeed, $\mathbf{X}$ and $\mathbf{Z}$ of last section are particular cases of what we will called Gaussian fields. One has just to replace the underlying probability space $(H,\mathcal{B},\gamma_s)$, which is not well-defined, by a sufficiently big one $(\Omega,\mathcal{F},\mathbb{P})$. In the following, the underlying probability space  $(\Omega,\mathcal{F},\mathbb{P})$ will be completely arbitrary, but in concrete cases, the measure of reference $\mathbb{P}$ is often supported on a space $\Omega$ bigger than $H$. For example, in \cite{DriverHall1999}, the measure of reference $\mathbb{P}$ is a Wiener measure on a Wiener space whose Cameron-Martin space is $H$.

Let us fix an underlying probability space $(\Omega,\mathcal{F},\mathbb{P})$ and call random variables the measurable functions on $\Omega$. For all real Hilbert space, a linear map $\mathbf{X}$ from $H$ to the space of real random variables is called a Gaussian field on $H$ if, for all $h\in H$, $\mathbf{X}(h)$ is centered Gaussian with variance $\mathbb{E}[|\mathbf{X}(h)|^2]=\|h\|^2$. For all $r,s\geq 0$, a linear map $\mathbf{Z}$ from $H$ to the space of complex random variables is called an $(r,s)$-elliptic Gaussian field if it has the same distribution as $\sqrt{r} \mathbf{Z}_1+i\sqrt{s} \mathbf{Z}_2$, where $\mathbf{Z}_1$ and  $\mathbf{Z}_2$ are two Gaussian fields on $H$ which are independent (in particular, an $(r,0)$-elliptic Gaussian field is real-valued and an $(0,s)$-elliptic Gaussian field is purely imaginary-valued). Let $r,s\geq 0$, and let $\mathbf{Z}$ be an $(r,s)$-elliptic Gaussian field. Following the last section, we define $\mathcal{H}L^2(\mathbf{Z})$ to be the completion of the algebra of random variables $\mathbb{C}[\mathbf{Z}(h):h\in H]$ in $L^2(\Omega,\mathcal{F},\mathbb{P})$. When $s=0$ or $r=0$, $\mathcal{H}L^2(\mathbf{Z})$ coincide with the random variables of $L^2(\Omega,\mathcal{F},\mathbb{P})$ which are measurable with respect to the $\sigma$-field generated by the random variables $(\mathbf{Z}(h))_{h\in H}$, and we will simply write $L^2(\mathbf{Z})$ instead of $\mathcal{H}L^2(\mathbf{Z})$.

Let $s> t/2\geq 0$. In Section~\ref{SBproba}, $\mathbf{X}$ was an $(s,0)$-elliptic Gaussian field and $\mathbf{Z}$ was an $(s-t/2,t/2)$-elliptic Gaussian field on a finite-dimensional Hilbert space $H$. Thanks to Section~\ref{SBproba}, we have the following proposition.
\begin{proposition}Let $H$ be a (possibly infinite-dimensional) Hilbert space $H$, $\mathbf{X}$ be an $(s,0)$-elliptic Gaussian field on $H$ and $\mathbf{Z}$ be an $(s-t/2,t/2)$-elliptic Gaussian field on $H$. The map given, for all orthonormal family $h_1,\ldots,h_k$ of $H$, by
\begin{equation}\mathscr{S}^{s,t}:H_{n_1}^s(\mathbf{X}(h_1))\cdots H_{n_k}^s(\mathbf{X}(h_k))\mapsto H_{n_1}^{s-t}(\mathbf{Z}(h_1))\cdots H_{n_k}^{s-t}(\mathbf{Z}(h_k)),\label{SegalHermite}\end{equation}
is a well-defined isometry from $\mathbb{C}[\mathbf{X}(h):h\in H]$ to $\mathbb{C}[\mathbf{Z}(h):h\in H]$ which extends uniquely to an isomorphism of Hilbert space $\mathscr{S}^{s,t}:L^2(\mathbf{X})\to \mathcal{H}L^2(\mathbf{Z})$, called in the following the (two-parameter) \emph{Segal-Bargmann transform}.
\end{proposition}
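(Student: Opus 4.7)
The strategy is to reduce the infinite-dimensional statement to the finite-dimensional Segal-Bargmann isomorphism already established in Section~\ref{SBproba} by restricting each polynomial to the finite-dimensional subspace on which it depends, and then extending by density.

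First I would observe that every $P\in \mathbb{C}[\mathbf{X}(h):h\in H]$ depends on only finitely many of the random variables $\mathbf{X}(h)$. Choose a finite orthonormal family $(e_1,\ldots,e_d)$ of $H$ whose span $H_0$ contains all arguments of $P$. By linearity of $\mathbf{X}$, one may rewrite $P$ as a polynomial in $\mathbf{X}(e_1),\ldots,\mathbf{X}(e_d)$ and expand it uniquely in the Hermite basis $\{H_{n_1}^s(\mathbf{X}(e_1))\cdots H_{n_d}^s(\mathbf{X}(e_d))\}_{(n_1,\ldots,n_d)\in \mathbb{N}^d}$. Since $\mathbf{X}$ is $(s,0)$-elliptic, the joint law of $(\mathbf{X}(e_1),\ldots,\mathbf{X}(e_d))$ is exactly $\gamma_s$ on $H_0\simeq \mathbb{R}^d$; likewise $(\mathbf{Z}(e_1),\ldots,\mathbf{Z}(e_d))$ is distributed as $\gamma_{s-t/2,t/2}$ on $H_0^{\mathbb{C}}\simeq \mathbb{C}^d$. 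On this Hermite basis of $H_0$, formula \eqref{SegalHermite} coincides with the finite-dimensional Segal-Bargmann transform on $H_0$ as described by \eqref{actiononHermite}, which is already known to be a unitary isomorphism. Hence \eqref{SegalHermite} produces a linear isometry from the polynomials in $\{\mathbf{X}(h):h\in H_0\}$ onto those in $\{\mathbf{Z}(h):h\in H_0\}$, with the norms agreeing term-by-term on the Hermite basis (each side giving $\prod_i n_i!\,s^{n_i}$).

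Next I would verify that this local prescription is consistent across different choices of finite-dimensional subspaces and orthonormal bases, which is the main potential obstacle. Independence of the chosen orthonormal basis of a fixed $H_0$ is immediate because the classical finite-dimensional transform is intrinsically defined by the convolution \eqref{defconv} without reference to any basis. For two nested subspaces $H_0\subset H_1$ with orthonormal bases $(e_1,\ldots,e_d)\subset (e_1,\ldots,e_{d'})$, the Hermite expansion of a polynomial in $\mathbf{X}(e_1),\ldots,\mathbf{X}(e_d)$ relative to either basis yields the same image under \eqref{SegalHermite} because $H_0^s=1$, so the extra factors $H_0^s(\mathbf{X}(e_i))$ for $i>d$ are trivial in both source and target. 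Any two finite-dimensional subspaces reduce to this case by passing to a common enlargement. Assembling the local isometries, one obtains a globally well-defined linear isometry on $\mathbb{C}[\mathbf{X}(h):h\in H]$ whose image is $\mathbb{C}[\mathbf{Z}(h):h\in H]$. Finally, since $L^2(\mathbf{X})$ and $\mathcal{H}L^2(\mathbf{Z})$ are by definition the $L^2$-closures of these polynomial algebras, the isometry extends uniquely by continuity to a Hilbert space isomorphism $\mathscr{S}^{s,t}:L^2(\mathbf{X})\to \mathcal{H}L^2(\mathbf{Z})$.
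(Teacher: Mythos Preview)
Your proposal is correct and follows exactly the approach the paper intends: the paper gives no explicit proof but simply writes ``Thanks to Section~\ref{SBproba}, we have the following proposition,'' i.e.\ it reduces to the finite-dimensional isomorphism established there, which is precisely your strategy of restricting each polynomial to a finite-dimensional subspace and invoking \eqref{actiononHermite}. Your consistency check across nested subspaces and your density argument merely spell out the details that the paper leaves implicit.
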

\paragraph{Segal-Bargmann transform and Wick products}In order to define $q$-deformation of the Segal-Bargmann transform, we give here a second description of the Gaussian fields and of the Segal-Bargmann transform defined in Section \ref{sectionSB}.

Let $\mathbf{X}$ be a Gaussian field on $H$.
The Wick product is the result of the Gram-Schmidt process for the basis of $L^2(\mathbf{X})$ given by monomials. More precisely, for all $n\geq 0$ and $h_1,\ldots, h_n\in H$, we define the Wick product $\mathbf{X}(h_1)\diamond\cdots \diamond\mathbf{X}(h_n)$ of $\mathbf{X}(h_1),\ldots,\mathbf{X}(h_n)$ as the unique element of $$\mathbf{X}(h_1)\cdots\mathbf{X}(h_n)+Span \{\mathbf{X}(k_1)\cdots\mathbf{X}(k_m):m< n, k_1,\ldots, k_m\in H\}$$
which is orthogonal to $Span \{\mathbf{X}(k_1)\cdots\mathbf{X}(k_m):m\leq n, k_1,\ldots, k_m\in H\}$, or equivalently, such that$$\mathbb{E}[(\mathbf{X}(h_1)\diamond\cdots \diamond\mathbf{X}(h_n))\cdot \mathbf{X}(k_1)\cdots\mathbf{X}(k_m)]=0$$for all $m< n, k_1,\ldots, k_m\in H$. In certain cases, the Wick product can be computed explicitly. For all $n\geq 0$, $m_1,\ldots,m_n\geq 1$ and $h_1,\ldots, h_n$ an orthonormal family of $H$, we have
$$\mathbf{X}(h_1)^{\diamond m_1}\diamond\cdots\diamond\mathbf{X}(h_n)^{\diamond m_n}\ =H_{m_1}^1(\mathbf{X}(h_1))\cdots H_{m_n}^1(\mathbf{X}(h_n)).$$

Let $\mathbf{Z}$ be a Gaussian $(s,t)$-elliptic system on $H$. In the same way, for all $n\geq 0$ and $h_1,\ldots, h_n\in H$, we define the Wick product $\mathbf{Z}(h_1)\diamond\cdots\diamond\mathbf{Z}(h_n)$ of $\mathbf{Z}(h_1),\ldots,\mathbf{Z}(h_n)$ as the unique element of $$\mathbf{Z}(h_1)\cdots\mathbf{X}(h_n)+Span \{\mathbf{Z}(k_1)\cdots\mathbf{Z}(k_m):m< n, k_1,\ldots, k_m\in H\}$$
which is orthogonal to $Span \{\mathbf{Z}(k_1)\cdots\mathbf{Z}(k_m):m\leq n, k_1,\ldots, k_m\in H\}$, or equivalently, such that$$\mathbb{E}[(\mathbf{Z}(h_1)\diamond\cdots\diamond\mathbf{Z}(h_n))\cdot \mathbf{Z}(k_1)\cdots\mathbf{Z}(k_m)]=0$$for all $m< n, k_1,\ldots, k_m\in H$. By multilinearity and the discussion above, for all $n\geq 0$, $m_1,\ldots,m_n\geq 1$ and $h_1,\ldots, h_n$ an orthonormal family of $H$, we have $
\mathbf{Z}(h_1)^{\diamond m_1}\diamond\cdots\diamond\mathbf{Z}(h_n)^{\diamond m_n}=H_{m_1}^{s-t}(\mathbf{Z}(h_1))\cdots H_{m_n}^{s-t}(\mathbf{Z}(h_n)).$

We are now able to give an alternative description of the Segal-Bargmann transform. Let $\mathbf{X}$ be a $(s,0)$-elliptic Gaussian system, and $\mathbf{Z}$ be a Gaussian $(s,t)$-elliptic system on $H$. From \eqref{SegalHermite}, we deduce that, for all orthonormal family $h_1,\ldots,h_k$ of $H$, we have
$$\mathscr{S}^{s,t}(\mathbf{X}(h_1)^{\diamond m_1}\diamond\cdots\diamond\mathbf{X}(h_n)^{\diamond m_n})=\mathbf{Z}(h_1)^{\diamond m_1}\diamond\cdots\diamond\mathbf{Z}(h_n)^{\diamond m_n}$$
which can be generalized by multilinearity to the following.
\begin{proposition}Let $\mathbf{X}$ be a $(s,0)$-elliptic Gaussian system, and $\mathbf{Z}$ be a Gaussian $(s,t)$-elliptic system on $H$. For all $n\geq 0$ and $h_1,\ldots, h_n\in H$, we have\label{SecSegalWick}
\begin{equation}\mathscr{S}^{s,t}(\mathbf{X}(h_1)\diamond\cdots\diamond\mathbf{X}(h_n))=\mathbf{Z}(h_1)\diamond\cdots\diamond\mathbf{Z}(h_n).\label{SegalWick}\end{equation}\end{proposition}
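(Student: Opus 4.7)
The plan is to reduce the general statement to the orthonormal case already recorded just above the proposition, by exploiting multilinearity and symmetry of both sides in $(h_1,\ldots,h_n)$.

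First I would verify that $(h_1,\ldots,h_n)\mapsto \mathbf{X}(h_1)\diamond\cdots\diamond\mathbf{X}(h_n)$ is multilinear and symmetric in its arguments, and likewise for $\mathbf{Z}$. Both properties are immediate from the characterization of the Wick product as the orthogonal projection of $\mathbf{X}(h_1)\cdots\mathbf{X}(h_n)$ onto the orthogonal complement (inside $\mathrm{Span}\{\mathbf{X}(k_1)\cdots\mathbf{X}(k_m):m\leq n,\,k_j\in H\}$) of $\mathrm{Span}\{\mathbf{X}(k_1)\cdots\mathbf{X}(k_m):m<n,\,k_j\in H\}$. The monomial $\mathbf{X}(h_1)\cdots\mathbf{X}(h_n)$ is multilinear in each $h_i$ and, since the scalar-valued random variables $\mathbf{X}(h_i)$ commute, symmetric in $(h_1,\ldots,h_n)$; the subspace onto which we orthogonalize does not depend on $(h_1,\ldots,h_n)$ at all. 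Hence the orthogonal projection inherits both properties. The same reasoning applies to $\mathbf{Z}$, and the map $\mathscr{S}^{s,t}$ is linear, so the two sides of \eqref{SegalWick} are each multilinear and symmetric in $(h_1,\ldots,h_n)$.

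Second, for arbitrary $h_1,\ldots,h_n\in H$, I would choose an orthonormal basis $e_1,\ldots,e_k$ of $\mathrm{Span}(h_1,\ldots,h_n)$ and expand $h_i=\sum_j c_{i,j}e_j$. By multilinearity, both sides of the claimed identity become the same linear combination, indexed by tuples $(j_1,\ldots,j_n)$, of terms $\mathbf{X}(e_{j_1})\diamond\cdots\diamond\mathbf{X}(e_{j_n})$ on the left and $\mathbf{Z}(e_{j_1})\diamond\cdots\diamond\mathbf{Z}(e_{j_n})$ on the right. By symmetry, each such term may be rearranged into the form $\mathbf{X}(e_{\ell_1})^{\diamond m_1}\diamond\cdots\diamond\mathbf{X}(e_{\ell_r})^{\diamond m_r}$ with $\ell_1<\cdots<\ell_r$ distinct, and the orthonormal formula stated just before the proposition identifies this with $H_{m_1}^s(\mathbf{X}(e_{\ell_1}))\cdots H_{m_r}^s(\mathbf{X}(e_{\ell_r}))$. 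Applying $\mathscr{S}^{s,t}$ and invoking \eqref{SegalHermite} yields $H_{m_1}^{s-t}(\mathbf{Z}(e_{\ell_1}))\cdots H_{m_r}^{s-t}(\mathbf{Z}(e_{\ell_r}))$, which, by the same orthonormal identification applied to $\mathbf{Z}$, equals $\mathbf{Z}(e_{\ell_1})^{\diamond m_1}\diamond\cdots\diamond\mathbf{Z}(e_{\ell_r})^{\diamond m_r}$. Matching terms across the expansion then gives \eqref{SegalWick}.

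The only delicate point is the multilinearity and symmetry of the Wick product, but these fall out of its defining orthogonal-projection characterization; the remainder is bookkeeping via basis expansion and reindexing. I do not expect a genuine obstacle.
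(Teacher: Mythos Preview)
Your proof is correct and follows essentially the same route as the paper: the paper establishes the identity for orthonormal families via the Hermite-polynomial formula and \eqref{SegalHermite}, and then simply says the general case follows ``by multilinearity,'' which is exactly your basis-expansion argument. You are more careful in justifying the multilinearity and symmetry of the Wick product from its orthogonal-projection characterization, but the underlying strategy is identical.
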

\paragraph{Segal-Bargmann transform and conditional expectations}
In the proof of Theorem~\ref{theoremthree}, we will need a third description of the Segal-Bargmann transform, which follows directly from the definition. Let $\mathbf{X}$ be a $(s,0)$-elliptic Gaussian system, and $\mathbf{Z}$ be a Gaussian $(s,t)$-elliptic system on $H$. If $\mathbf{Y}$ is a $(t,0)$-elliptic Gaussian system which is independent from $\mathbf{Z}$, we have, for all $P\in \C[ x_h:h\in H]$,
\begin{equation}\mathscr{S}^{s,t}(P(\mathbf{X}(h):h\in H))=\mathbb{E}\left[P(\mathbf{Z}(h)+\mathbf{Y}(h):h\in H)|\mathbf{Z}(h):h\in H\right].\end{equation}
Because the formula only involves finitely many variables $h$ for each $P\in \C[ x_h:h\in H]$, it is enough to prove the formula for finite-dimensional Hilbert spaces $H$. For convenience, we take the particular case of Section~\ref{SBproba}: $\mathbf{X}(h)$ is the linear functional $x\mapsto \langle x,h\rangle$ defined on $(H,\mathcal{B},\gamma_s)$ and $\mathbf{Z}(h)$ the linear functional $z\mapsto \langle z,h\rangle$ defined on $(H^{\mathbb{C}},\mathcal{B},\gamma_{s-t/2,t/2})$. Let $P\in \mathbb{C}[x_h:h\in H]$. For all $z\in H$,
\begin{align*}\mathscr{S}^{s,t}(P(\mathbf{X}(h):h\in H))(z)&=\int_HP(\mathbf{X}(h):h\in H)(z-x)\diff \gamma_t (x)\\
&=\int_HP(\langle z-x,h\rangle:h\in H)\diff \gamma_t (x)\\
\mathscr{S}^{s,t}(P(\mathbf{X}(h):h\in H))(z)&=\int_HP\Big((\mathbf{Z}(h))(z)-(\mathbf{X}(h))(x):h\in H\Big)\diff \gamma_t (x).\end{align*}
The last line is also valid for all $z\in H^\mathbb{C}$, since each side is analytic. We recognize the conditioning of two independent set of variables: by enlarging the underlying probability space, we assume that there exists a $(t,0)$-elliptic Gaussian system $\mathbf{Y}$ independent from $\mathbf{Z}$ and rewrite the last equality as follows.\label{SecSegalCond}
\begin{proposition}Let $\mathbf{X}$ be a $(s,0)$-elliptic Gaussian system, and $\mathbf{Z}$ be a Gaussian $(s,t)$-elliptic system on $H$. Let us assume that there exists a $(t,0)$-elliptic Gaussian system $\mathbf{Y}$ independent from $\mathbf{Z}$. For all $P\in \C[ x_h:h\in H]$, we have
\begin{equation}\mathscr{S}^{s,t}\Big(P(\mathbf{X}(h):h\in H)\Big)=\mathbb{E}\Big[P(\mathbf{Z}(h)+\mathbf{Y}(h):h\in H)\Big|\mathbf{Z}(h):h\in H\Big].\label{SegalCond}\end{equation}\end{proposition}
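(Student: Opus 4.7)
The plan is to verify the identity by a direct computation in a finite-dimensional concrete model. Since any $P\in\mathbb{C}[x_h:h\in H]$ involves only finitely many of the variables $\mathbf{X}(h)$, I may restrict $H$ to the finite-dimensional subspace spanned by the corresponding vectors. On this finite-dimensional space, I adopt the concrete realization described at the beginning of Section~\ref{SBproba}: the underlying probability space is $(H,\mathcal{B},\gamma_s)$ with $\mathbf{X}(h)(x)=\langle x,h\rangle$, and $(H^{\mathbb{C}},\mathcal{B},\gamma_{s-t/2,t/2})$ carries $\mathbf{Z}(h)(z)=\langle z,h\rangle$. This puts us in a position to apply the original convolution definition of $\mathscr{S}^{s,t}$.

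Next, I invoke \eqref{defconv}: for $z\in H$,
\[
\mathscr{S}^{s,t}\bigl(P(\mathbf{X}(h):h\in H)\bigr)(z)=\int_H P\bigl(\langle z-x,h\rangle:h\in H\bigr)\diff\gamma_t(x)=\int_H P\bigl(\mathbf{Z}(h)(z)-\mathbf{X}(h)(x):h\in H\bigr)\diff\gamma_t(x).
\]
Both sides are polynomial, hence entire, in $z$, so this identity automatically extends from $z\in H$ to all $z\in H^{\mathbb{C}}$; this is already the displayed computation preceding the statement of the proposition.

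The final step is to rewrite the $\gamma_t$-integral on the right-hand side as a conditional expectation. I enlarge the underlying probability space to accommodate a $(t,0)$-elliptic Gaussian field $\mathbf{Y}$ which is independent from $\mathbf{Z}$ — for instance by taking the product with an independent copy of $(H,\mathcal{B},\gamma_t)$ and letting $\mathbf{Y}(h)$ be the coordinate functional on the new factor. Independence of $\mathbf{Y}$ from $\mathbf{Z}$ then says precisely that, for any polynomial $Q$ in finitely many variables,
\[
\mathbb{E}\bigl[Q(\mathbf{Z}(h),\mathbf{Y}(h):h\in H)\big|\mathbf{Z}(h):h\in H\bigr]=\int_H Q\bigl(\mathbf{Z}(h),\mathbf{X}(h)(x):h\in H\bigr)\diff\gamma_t(x).
\]
Applying this with $Q(u,v)=P(u+v)$ turns the displayed integral above into the right-hand side of \eqref{SegalCond}, finishing the proof.

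The argument is essentially dictionary-work: the convolution already present in the definition of $\mathscr{S}^{s,t}$ is literally reinterpreted as a conditional expectation. No genuine obstacle is expected; the only point demanding mild care is the probability-space enlargement producing $\mathbf{Y}$, but this is a standard product-space construction.
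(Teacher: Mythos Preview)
Your proposal is correct and follows exactly the same approach as the paper: reduce to finite dimensions, realize $\mathbf{X}$ and $\mathbf{Z}$ concretely as coordinate functionals, apply the convolution definition \eqref{defconv}, extend by analyticity to $H^{\mathbb{C}}$, and then reinterpret the $\gamma_t$-integral as a conditional expectation after enlarging the probability space to carry an independent $\mathbf{Y}$. Indeed, the paper's proof is precisely the displayed computation preceding the proposition, which you have reproduced.
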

\subsection{The $q$-Deformation of the Segal-Bargmann Transform}
\label{qdeformationoftheSegalBargmanntransform}

\begin{definition}Let $-1\leq q \leq 1$. A $q$-Gaussian field $\mathbf{X}_q$ on $H$ is a linear map from $H$ to a non-commutative probability space $(\mathscr{A},\tau)$ which is an isometry for the $L^2$-norm and such that $(\mathbf{X}_q(h))_{h\in H}$ is jointly $q$-Gaussian.

A $(r,s)$-elliptic $q$-Gaussian field $\mathbf{Z}_q$ is a linear map from $H$ to a non-commutative probability space $(\mathscr{A},\tau)$ which can be decomposed as $\sqrt{r}\mathbf{X}_q+i\sqrt{s}\mathbf{Y}_q$, where $\mathbf{X}_q$ and $\mathbf{Y}_q$ are two $q$-Gaussian field which are $q$-independent. Elliptic $q$-Gaussian fields are $q$-independent if the previous decomposition holds simultaneously with $q$-Gaussian fields which are all $q$-independent.
\end{definition}
The following definition of the Segal-Bargmann transform in the infinite-dimensional case coincide with the classical Segal-Bargmann transform if $q=1$, with the definition of Kemp in~\cite{Kemp2005} if $s=t$, and with the definition of the second author in~\cite{Ho2016} if $q=0$.
\begin{propdef}
\label{SBtransformqFockspace}
Let $\mathbf{X}_q$ be a $q$-Gaussian $(s,0)$-elliptic system, and $\mathbf{Z}_q$ be a $q$-Gaussian $(s-t/2,t/2)$-elliptic system from $H$ to $\mathscr{A}$. 
The ($q$-deformed) \emph{Segal-Bargmann transform} $\mathscr{S}^{s,t}_q$ is the unique unitary isomorphism from $L^2(\mathbf{X}_q,\tau)$ to $\mathcal{H}L^2(\mathbf{Z}_q,\tau)$ such that, for all $h_1,\ldots, h_n\in H$,
\begin{equation}
\mathscr{S}^{s,t}_q\left(\mathbf{X}_q(h_1)\diamond\cdots\diamond\mathbf{X}_q(h_n)\right)=\mathbf{Z}_q(h_1)\diamond\cdots\diamond\mathbf{Z}_q(h_n).\label{actiononqHermite}
\end{equation}
\end{propdef}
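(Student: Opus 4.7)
My plan is to reduce everything to matching covariance data between the two families $\{\mathbf{X}_q(h)\}_{h\in H}$ and $\{\mathbf{Z}_q(h),\mathbf{Z}_q(h)^*\}_{h\in H}$, and then invoke Proposition~\ref{propWick}.

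First I would establish that the Wick products span dense subspaces on both sides. On the domain, Proposition~\ref{propWick}(4) asserts that $\{\mathbf{X}_q(h_1)\diamond\cdots\diamond\mathbf{X}_q(h_n)\}$ spans $\C\langle \mathbf{X}_q(h):h\in H\rangle$, which is by definition dense in $L^2(\mathbf{X}_q,\tau)$. On the codomain side, the very same proposition gives the analogous spanning statement for $\{\mathbf{Z}_q(h_1)\diamond\cdots\diamond\mathbf{Z}_q(h_n)\}$ inside the (non-$*$) algebra $\C\langle \mathbf{Z}_q(h):h\in H\rangle$, which is dense in $\mathcal{H}L^2(\mathbf{Z}_q,\tau)$ by definition.

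The heart of the proof is then the isometry computation. Using Proposition~\ref{propWick}(2) and the self-adjointness of $\mathbf{X}_q(h)$, I would rewrite
\[
\langle \mathbf{X}_q(h_1)\diamond\cdots\diamond\mathbf{X}_q(h_n),\mathbf{X}_q(k_1)\diamond\cdots\diamond\mathbf{X}_q(k_m)\rangle = \tau\Big[(\mathbf{X}_q(h_n)\diamond\cdots\diamond\mathbf{X}_q(h_1))(\mathbf{X}_q(k_1)\diamond\cdots\diamond\mathbf{X}_q(k_m))\Big],
\]
and symmetrically on the range,
\[
\langle \mathbf{Z}_q(h_1)\diamond\cdots\diamond\mathbf{Z}_q(h_n),\mathbf{Z}_q(k_1)\diamond\cdots\diamond\mathbf{Z}_q(k_m)\rangle = \tau\Big[(\mathbf{Z}_q(h_n)^*\diamond\cdots\diamond\mathbf{Z}_q(h_1)^*)(\mathbf{Z}_q(k_1)\diamond\cdots\diamond\mathbf{Z}_q(k_m))\Big].
\]
Proposition~\ref{propWick}(3) expresses each of these as $\delta_{n,m}\sum_{\pi\in \mathcal{P}_2(n,n)}q^{\cross(\pi)}\prod_{\{i,j\}\in\pi}\tau[Y_iY_j]$, with each pair contributing a covariance that couples one entry from the "left half" with one from the "right half." The matching therefore reduces to the single identity
\[
\tau[\mathbf{X}_q(h)\mathbf{X}_q(k)]=\tau[\mathbf{Z}_q(h)^*\mathbf{Z}_q(k)]\quad\text{for all }h,k\in H.
\]
The left side equals $s\langle h,k\rangle$ by the isometry clause of the $(s,0)$-elliptic $q$-Gaussian field. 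For the right side, decomposing $\mathbf{Z}_q=\sqrt{s-t/2}\,\mathbf{X}_q'+i\sqrt{t/2}\,\mathbf{Y}_q'$ with $\mathbf{X}_q',\mathbf{Y}_q'$ self-adjoint and $q$-independent, the cross terms $\tau[\mathbf{X}_q'(h)\mathbf{Y}_q'(k)]$ vanish by orthogonality, leaving $(s-t/2)\langle h,k\rangle+(t/2)\langle h,k\rangle=s\langle h,k\rangle$.

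With all inner products of Wick products matched, the formula~\eqref{actiononqHermite} defines a linear map on formal linear combinations of Wick products that preserves the $L^2$-inner product; consequently it sends the kernel to the kernel and descends to a well-defined linear isometry between the two dense subspaces. Continuous extension yields the unitary isomorphism $\mathscr{S}_q^{s,t}$, and uniqueness is immediate since the prescription is given on a spanning set of a dense subspace. I expect no serious obstacle: the main care needed is the bookkeeping with the reversal from Proposition~\ref{propWick}(2) and the fact that Proposition~\ref{propWick}(3) produces only "through" pairings, which is precisely what makes the single-covariance identity above sufficient.
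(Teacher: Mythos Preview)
Your proof is correct and follows essentially the same approach as the paper: the paper also reduces the isometry to the single covariance identity $\tau[\mathbf{Z}_q(h)\mathbf{Z}_q(k)^*]=s\langle h,k\rangle_H=\tau[\mathbf{X}_q(h)\mathbf{X}_q(k)^*]$, invokes Proposition~\ref{propWick} to match inner products of Wick products, and then extends by density. Your write-up is somewhat more explicit about which items of Proposition~\ref{propWick} are being used and about the spanning/density step, but the logic is identical.
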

We will see in Corollary~\ref{frommultitoone} that this transform is indeed a generalization of Definition~\ref{defSqstgen}.
\begin{proof}The unicity is clear. It remains to prove the existence and the unitarity. Let us first remark that, for all $h,k\in H$, we have
\begin{multline*}\tau[\mathbf{Z}_q(h)\mathbf{Z}_q(k)^*]=\tau[\Re\mathbf{Z}_q(h)\Re\mathbf{Z}_q(k)]+0+\tau[\Im\mathbf{Z}_q(h)\Im\mathbf{Z}_q(h)]\\=(s-t/2)\langle h, k\rangle_H+(t/2)\langle h, k\rangle_H=s\langle h, k\rangle_H=\tau[\mathbf{X}_q(h)\mathbf{X}_q(k)^*].\end{multline*}
Combined with Proposition~\ref{propWick}, it follows that, for all $h_1,\ldots, h_n\in H$ and $k_1,\ldots, k_m\in H$,
$$\langle \mathbf{X}_q(h_1)\diamond\cdots\diamond\mathbf{X}_q(h_n),\mathbf{X}_q(k_1)\diamond\cdots\diamond\mathbf{X}_q(k_m)\rangle_{L^2(\mathbf{X}_q,\tau)}=\langle \mathbf{Z}_q(h_1)\diamond\cdots\diamond\mathbf{Z}_q(h_n),\mathbf{Z}_q(k_1)\diamond\cdots\diamond\mathbf{Z}_q(k_m)\rangle_{\mathcal{H}L^2(\mathbf{Z}_q,\tau)}.$$
We deduce the existence of the unitary linear map $\mathscr{S}^{s,t}_q$ from $\mathbb{C}\langle \mathbf{X}_q(h):h\in H\rangle$ to $\mathbb{C}\langle \mathbf{Z}_q(h):h\in H\rangle$ given by \eqref{actiononqHermite}, and we extend this map to $\mathscr{S}^{s,t}_q:L^2(\mathbf{X}_q,\tau)\to \mathcal{H}L^2(\mathbf{Z}_q,\tau)$ by density.
\end{proof}
Here again, the $q$-deformed Segal-Bargmann transform can be
seen as the action of a "$q$-deformed heat kernel", a result which extends \cite[Theorem 3.1]{Cebron2013} to $-1\leq q \leq 1$.
\begin{theorem}\label{qSBCondexp}Let $\mathbf{X}_q$ be a $q$-Gaussian $(s,0)$-elliptic system, and $\mathbf{Z}_q$ be a $q$-Gaussian $(s,t)$-elliptic system from $H$ to $\mathscr{A}$. If $\mathbf{Y}_q$ is a $q$-Gaussian $(t,0)$-elliptic system which is $q$-independent from $\mathbf{Z}_q$, we have, for all noncommutative polynomial $P\in \C\langle x_h:h\in H\rangle$,
$$\mathscr{S}_q^{s,t}(P(\mathbf{X}_q(h):h\in H))=\tau\left[P(\mathbf{Y}_q(h)+\mathbf{Z}_q(h):h\in H)|\mathbf{Z}_q\right].$$
\end{theorem}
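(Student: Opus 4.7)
The plan is to adapt the one-dimensional argument of Corollary~\ref{Condexpforone} to the multidimensional Wick-product setting. Both sides of the identity are linear in $P\in\C\langle x_h:h\in H\rangle$, so by Proposition~\ref{propWick}(4) it suffices to treat the case where $P(\mathbf{X}_q)$ is a Wick product $\mathbf{X}_q(h_1)\diamond\cdots\diamond \mathbf{X}_q(h_n)$. To make this reduction usable I will introduce a \emph{formal Wick polynomial} $P_{h_1,\ldots,h_n}\in\C\langle x_h:h\in H\rangle$, defined by the Wick-product recursion from Section~\ref{GaussianvariablesandWickproduct} applied at the level of formal variables with target covariance $\tau[x_hx_k]:=s\langle h,k\rangle$. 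Since $P_{h_1,\ldots,h_n}-x_{h_1}\cdots x_{h_n}$ is a linear combination of monomials of strictly lower degree, a triangularity argument shows these formal Wick polynomials span $\C\langle x_h:h\in H\rangle$, reducing the problem to the case $P=P_{h_1,\ldots,h_n}$.

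The key covariance-matching step is
\[
\tau\bigl[(\mathbf{Y}_q+\mathbf{Z}_q)(h)(\mathbf{Y}_q+\mathbf{Z}_q)(k)\bigr]=\tau[\mathbf{Y}_q(h)\mathbf{Y}_q(k)]+\tau[\mathbf{Z}_q(h)\mathbf{Z}_q(k)]=t\langle h,k\rangle+(s-t)\langle h,k\rangle=s\langle h,k\rangle,
\]
the cross-terms vanishing by $q$-independence of $\mathbf{Y}_q$ and $\mathbf{Z}_q$; this matches $\tau[\mathbf{X}_q(h)\mathbf{X}_q(k)]=s\langle h,k\rangle$. Because the Wick recursion depends only on the pairwise covariances of the variables plugged in, the same formal polynomial $P_{h_1,\ldots,h_n}$ evaluates to the corresponding Wick product in either system:
\[
P_{h_1,\ldots,h_n}(\mathbf{X}_q)=\mathbf{X}_q(h_1)\diamond\cdots\diamond \mathbf{X}_q(h_n),\qquad P_{h_1,\ldots,h_n}(\mathbf{Y}_q+\mathbf{Z}_q)=(\mathbf{Y}_q+\mathbf{Z}_q)(h_1)\diamond\cdots\diamond(\mathbf{Y}_q+\mathbf{Z}_q)(h_n).
\]

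To finish, I expand the second Wick product by multilinearity inside the jointly $q$-Gaussian system $\mathbf{Y}_q(H)\cup\{\Re\mathbf{Z}_q(h),\Im\mathbf{Z}_q(h):h\in H\}$ (which is jointly $q$-Gaussian by the $q$-independence hypothesis):
\[
(\mathbf{Y}_q+\mathbf{Z}_q)(h_1)\diamond\cdots\diamond(\mathbf{Y}_q+\mathbf{Z}_q)(h_n)=\sum_{\epsilon\in\{0,1\}^n}\mathbf{W}_{\epsilon_1}(h_1)\diamond\cdots\diamond\mathbf{W}_{\epsilon_n}(h_n),
\]
with $\mathbf{W}_0=\mathbf{Y}_q$ and $\mathbf{W}_1=\mathbf{Z}_q$. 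Applying $\tau[\,\cdot\,|\mathbf{Z}_q]$ and invoking Proposition~\ref{CondExpProp} with $\mathcal{X}=\mathbf{Z}_q(H)$ and $\mathcal{Y}=\mathbf{Y}_q(H)$, every summand containing at least one factor from $\mathbf{Y}_q(H)$ is annihilated; only $\epsilon=(1,\ldots,1)$ survives, leaving $\mathbf{Z}_q(h_1)\diamond\cdots\diamond\mathbf{Z}_q(h_n)$. By the defining relation~\eqref{actiononqHermite} of $\mathscr{S}_q^{s,t}$, this equals $\mathscr{S}_q^{s,t}(\mathbf{X}_q(h_1)\diamond\cdots\diamond \mathbf{X}_q(h_n))=\mathscr{S}_q^{s,t}(P_{h_1,\ldots,h_n}(\mathbf{X}_q))$, concluding the proof.

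The only delicate ingredient is the very first step: one must construct a single noncommutative polynomial $P_{h_1,\ldots,h_n}\in\C\langle x_h:h\in H\rangle$ that simultaneously represents the Wick product of $\mathbf{X}_q$ and of $\mathbf{Y}_q+\mathbf{Z}_q$, and this is only legitimate because the two fields share the covariance $s\langle\cdot,\cdot\rangle$. Once this covariance-matching reduction is secured, the remainder of the argument is a direct lift of the one-dimensional computation in Corollary~\ref{Condexpforone} from Wick monomials $Z^{\diamond n}$ to the multidimensional Wick products $\mathbf{Z}_q(h_1)\diamond\cdots\diamond\mathbf{Z}_q(h_n)$.
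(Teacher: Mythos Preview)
Your proof is correct and follows essentially the same route as the paper: introduce formal Wick polynomials $P_{h_1,\ldots,h_n}$ tailored to the covariance $s\langle\cdot,\cdot\rangle$, observe they span $\C\langle x_h:h\in H\rangle$, use covariance matching so that the same polynomial yields the Wick product for both $\mathbf{X}_q$ and $\mathbf{Y}_q+\mathbf{Z}_q$, then expand by multilinearity and apply Proposition~\ref{CondExpProp} to reduce the conditional expectation to $\mathbf{Z}_q(h_1)\diamond\cdots\diamond\mathbf{Z}_q(h_n)$. The paper writes the recursion for $P_{h_1,\ldots,h_n}$ explicitly and compresses your multilinear-expansion step into a single appeal to Proposition~\ref{CondExpProp}, but the argument is the same.
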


\begin{proof}For all $h_1,\ldots,h_n$, we define a polynomial $P_{h_1,\ldots,h_n}\in\C\langle x_h:h\in H\rangle$ by the following recursion formula: $P_{\emptyset}=1$ and 
$$P_{h_1,\ldots,h_n}=x_{h_1}\cdot P_{h_2,\ldots,h_n}-\sum_{i=2}^nq^{i-1}s\langle h_1, h_i\rangle_H\cdot P_{h_1,\ldots,\widehat{h}_i,\ldots, h_n}$$
where the hat means that we omit the corresponding element in the product. Since $\{P_{h_1,\ldots,h_n}\}_{n\geq 0,h_1,\ldots,h_n\in H}$ is a spanning set of $\C\langle x_h:h\in H\rangle$, it suffices to prove the theorem for those polynomials. Remark that, for all $h,k\in H$, $\tau[\mathbf{X}_q(h)\mathbf{X}_q(k)]=s\langle h, k\rangle_H$. Consequently, the variables $P_{h_1,\ldots,h_n}(\mathbf{X}_q(h):h\in H)$ and $\mathbf{X}_q(h_1)\diamond\cdots\diamond\mathbf{X}_q(h_n)$ satisfies the same recursion formula, and we have
$$P_{h_1,\ldots,h_n}(\mathbf{X}_q(h):h\in H)=\mathbf{X}_q(h_1)\diamond\cdots\diamond\mathbf{X}_q(h_n).$$
Similarly, we compute
$$\tau[(\mathbf{Y}_q+\mathbf{Z}_q)(h)\cdot(\mathbf{Y}_q+\mathbf{Z}_q)(k)]=\tau[\mathbf{Z}_q(h)\mathbf{Z}_q(k)]+\tau[\mathbf{Y}_q(h)\mathbf{Y}_q(h)]=(s-t)\langle h, k\rangle_H+t\langle h, k\rangle_H=s\langle h, k\rangle_H,$$
and we deduce the following equality by induction:
$$P_{h_1,\ldots,h_n}(\mathbf{Y}_q(h)+\mathbf{Z}_q(h):h\in H)=(\mathbf{Y}_q+\mathbf{Z}_q)(h_1)\diamond\cdots\diamond(\mathbf{Y}_q+\mathbf{Z}_q)(h_n).$$
Let us conclude by the following computation where we use Proposition~\ref{CondExpProp} to compute the conditional expectation:
\begin{align*}
 \tau\left[P_{h_1,\ldots,h_n}(\mathbf{Y}_q(h)+\mathbf{Z}_q(h):h\in H)|\mathbf{Z}_q\right]&= \tau\left[(\mathbf{Y}_q+\mathbf{Z}_q)(h_1)\diamond\cdots\diamond(\mathbf{Y}_q+\mathbf{Z}_q)(h_n)|\mathbf{Z}_q\right]\\
 &=\mathbf{Z}_q(h_1)\diamond\cdots\diamond\mathbf{Z}_q(h_n)\\
 &=\mathscr{S}^{s,t}_q\left(\mathbf{X}_q(h_1)\diamond\cdots\diamond\mathbf{X}_q(h_n)\right)\\
 &=P_{h_1,\ldots,h_n}(\mathbf{X}_q(h):h\in H).
\end{align*}

\end{proof}
Combining Theorem~\ref{qSBCondexp} with Corollary~\ref{Condexpforone}, we get the following result which relies Definition~\ref{defSqstgen} of $\mathscr{S}^{s,t}_qP$ and Definition~\ref{SBtransformqFockspace} of $\mathscr{S}^{s,t}_q\left(P(\mathbf{X}_q(h))\right)$ for one polynomial $P$.
\begin{corollary}Let $-1< q <1$. For a unit vector $h$ and a polynomial $P$, we have\label{frommultitoone}
$$\mathscr{S}^{s,t}_q\left(P(\mathbf{X}_q(h))\right)=\mathscr{S}^{s,t}_qP(\mathbf{Z}_q(h)).$$
\end{corollary}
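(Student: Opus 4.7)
The plan is to combine Theorem~\ref{qSBCondexp} with Corollary~\ref{Condexpforone}, as the paper's own preamble suggests. The strategy has three steps: rewrite the multidimensional left-hand side as a conditional expectation over the whole field $\mathbf{Z}_q$ using Theorem~\ref{qSBCondexp}, collapse the conditioning to the single variable $\mathbf{Z}_q(h)$ using the $q$-independence coming from the orthogonal decomposition $H = \mathbb{R} h \oplus h^\perp$, and finally invoke the one-dimensional Corollary~\ref{Condexpforone} to identify the resulting conditional expectation with the integral transform of Definition~\ref{defSqstgen} evaluated at $\mathbf{Z}_q(h)$.

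Concretely, enlarging the non-commutative probability space if needed, I would introduce a $(t,0)$-elliptic $q$-Gaussian field $\mathbf{Y}_q$ that is $q$-independent from $\mathbf{Z}_q$. Viewing $P$ as a non-commutative polynomial in the single indeterminate $x_h$, Theorem~\ref{qSBCondexp} gives
$$\mathscr{S}^{s,t}_q\bigl(P(\mathbf{X}_q(h))\bigr) = \tau\bigl[P(\mathbf{Y}_q(h) + \mathbf{Z}_q(h)) \bigm| \mathbf{Z}_q\bigr].$$
Since orthogonal vectors in $H$ produce orthogonal, hence $q$-independent, outputs under any of the $q$-Gaussian fields in play, the three sets $\mathcal{X} = \{\mathbf{Z}_q(h)\}$, $\mathcal{Y} = \{\mathbf{Y}_q(h)\}$, and $\mathcal{Z} = \{\mathbf{Z}_q(h') : h' \perp h\}$ are pairwise $q$-independent and jointly $q$-Gaussian. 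Corollary~\ref{CondExpCor} then collapses the conditioning:
$$\tau\bigl[P(\mathbf{Y}_q(h) + \mathbf{Z}_q(h)) \bigm| \mathbf{Z}_q\bigr] = \tau\bigl[P(\mathbf{Y}_q(h) + \mathbf{Z}_q(h)) \bigm| \mathbf{Z}_q(h)\bigr].$$
Setting $Y := \mathbf{Y}_q(h)$ and $Z := \mathbf{Z}_q(h)$, the unit-norm condition on $h$ gives $\tau[Y^2] = t$ and $\tau[Z^2] = (s-t/2) - (t/2) = s-t$, exactly the second-moment data used in the proof of Corollary~\ref{Condexpforone}. That corollary now delivers $\tau[P(Y+Z) \mid Z] = \mathscr{S}^{s,t}_q P(Z)$, and chaining the three equalities produces the claim.

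The only subtle step is the middle one, the reduction of the conditioning from $\mathbf{Z}_q$ to $\mathbf{Z}_q(h)$. It rests on verifying that $\{\mathbf{Z}_q(h)\}$ is $q$-independent of $\{\mathbf{Z}_q(h') : h' \perp h\}$, which follows directly from the isometry property of the field together with the $q$-independence of the real and imaginary parts in the decomposition $\mathbf{Z}_q = \sqrt{s-t/2}\,\mathbf{X}_q^{(1)} + i\sqrt{t/2}\,\mathbf{X}_q^{(2)}$. Alternatively, one can sidestep this verification by taking $\mathbf{X}_q, \mathbf{Y}_q, \mathbf{Z}_q$ defined only on the one-dimensional Hilbert subspace $\mathbb{R} h$ from the outset, in which case the conditioning on the whole of $\mathbf{Z}_q$ coincides tautologically with the conditioning on $\mathbf{Z}_q(h)$ and no independence check is needed.
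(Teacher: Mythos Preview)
Your proposal is correct and follows exactly the route the paper indicates in its preamble (``Combining Theorem~\ref{qSBCondexp} with Corollary~\ref{Condexpforone}''), which the paper leaves as an immediate consequence without a written proof. Your intermediate step reducing the conditioning from the full field $\mathbf{Z}_q$ to the single variable $\mathbf{Z}_q(h)$ via Corollary~\ref{CondExpCor}, together with the alternative of restricting to $\mathbb{R}h$ from the outset, makes explicit the only nontrivial bridge between the two cited results.
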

%

\subsection{Large N Limit}
Let us construct a boosted version of the Gaussian $(s,0)$-elliptic system on a Hilbert space $H$. Let us consider the tensor product Hilbert space $H\otimes_{\mathbb{R}} \mathbb{H}_{d^N}$ of $H$ with $(\mathbb{H}_{d^N},\|\cdot\|_{\sigma})$. Let $\mathbf{X}:H\otimes_{\mathbb{R}} \mathbb{H}_{d^N}\to L^2(\mathbf{X})\subset L^2(\Omega,\mathcal{F},\mathbb{P})$ be a Gaussian $(s,0)$-elliptic system.
We define $$\mathbf{X}^{(N)}:H\to L^2(\mathbf{X})\otimes_{\mathbb{R}} \mathbb{H}_{d^N}\simeq L^2(\mathbf{X})\otimes_{\mathbb{C}} \mathbb{M}_{d^N}\subset L^2(\Omega,\mathbb{P}; \mathbb{M}_{d^N})$$ by duality as the unique linear map $\mathbf{X}^{(N)}$ from $H$ to the random variables with value in $\mathbb{M}_{d^N}$ such that
$\mathbf{X}(h\otimes M)=\Tr(M\mathbf{X}^{(N)}(h)).$
Each variable $\Tr(M\mathbf{X}^{(N)}(h))$ is Gaussian with the covariance given by
$$\mathbb{E}\left[\Tr(M\mathbf{X}^{(N)}(h))\Tr(N\mathbf{X}^{(N)}(k))\right]=s\langle h,k\rangle_H \langle M,N\rangle_{\sigma}.$$
In other words, if the norm of $h$ is $1$, the distribution of the random matrix $\mathbf{X}^{(N)}(h)$ is the Gaussian distribution $\gamma_{d^N}^{\sigma,s}$ of Section~\ref{RandomMatrixModel}, and if $k$ is another vector orthogonal to $h$, the random matrices $\mathbf{X}^{(N)}(h)$ and $\mathbf{X}^{(N)}(k)$ are independent.

Similarly, let $\mathbf{Z}:H\otimes_{\mathbb{R}} \mathbb{H}_{d^N}\to L^2(\mathbf{X})\subset L^2(\Omega,\mathcal{F},\mathbb{P})$ be a Gaussian $(s-t/2,t/2)$-elliptic system. We define $$\mathbf{Z}^{(N)}:H\to L^2(\mathbf{X})\otimes_{\mathbb{R}} \mathbb{H}_{d^N}\simeq L^2(\mathbf{X})\otimes_{\mathbb{C}} \mathbb{M}_{d^N} \subset L^2(\Omega,\mathbb{P}; \mathbb{M}_{d^N})$$ by duality as the unique linear map $\mathbf{Z}^{(N)}$ from $H$ to the random variables with value in $\mathbb{M}_{d^N}$ such that
$\mathbf{Z}(h\otimes M)=\Tr(M\mathbf{Z}^{(N)}(h)).$
The Segal-Bargmann transform $\mathscr{S}^{s,t}:L^2(\mathbf{X})\to \mathcal{H}L^2(\mathbf{Z})$ is well-defined as in~\eqref{sectionSB}. Finally, we consider the following boosted Segal-Bargmann transform
$$\mathscrbf{S}_{d^N}^{s,t}=\mathscr{S}^{s,t}\otimes Id_{\mathbb{M}_{d^N}}:L^2(\mathbf{X})\otimes \mathbb{M}_{d^N}\to \mathcal{H}L^2(\mathbf{Z})\otimes \mathbb{M}_{d^N}.$$

\begin{theorem}\label{theoremthree}Let $0\leq q \leq 1$. Assuming \eqref{A}, \eqref{B}, \eqref{C} and \eqref{D} on $\sigma$ ensures that the Segal-Bargmann transform $\mathscrbf{S}_{d^N}^{s,t}$ converges to the $q$-deformed Segal-Bargmann transform $\mathscr{S}_q^{s,t}:L^2(\mathbf{X}_q,\tau)\to\mathcal{H}L^2(\mathbf{Z}_q,\tau)$ in the following sense: for all polynomial $P$ and $Q\in \mathbb{C}\langle x_h:h\in H\rangle$ such that
$$\mathscr{S}_q^{s,t}(P(\mathbf{X}_q(h):h\in H))=Q(\mathbf{Z}_q(h):h\in H),$$the norm
$\left\|P(\mathbf{X}^{(N)}(h):h\in H)\right\|_{L^2(\mathbf{X})\otimes \mathbb{M}_{d^N}}=\left\|\mathscrbf{S}_{d^N}^{s,t}(P(\mathbf{X}^{(N)}(h):h\in H))\right\|_{\mathcal{H}L^2(\mathbf{Z})\otimes \mathbb{M}_{d^N}}$ converges, as $N$ tends to $\infty$, to $\left\|P(\mathbf{X}_q(h):h\in H)\right\|_{L^2(\mathbf{X}_q,\tau)}=\left\|Q(\mathbf{Z}_q(h):h\in H)\right\|_{\mathcal{H}L^2(\mathbf{Z}_q,\tau)}$  and
$$\lim_{N\to \infty}\left\|\mathscrbf{S}_{d^N}^{s,t}(P(\mathbf{X}^{(N)}(h):h\in H))-Q(\mathbf{Z}^{(N)}(h):h\in H)\right\|_{\mathcal{H}L^2(\mathbf{Z})\otimes \mathbb{M}_{d^N}}=0.$$
\end{theorem}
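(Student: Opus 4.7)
The strategy is to mimic the proof of Theorem~\ref{theoremtwo} with two upgrades: the one-dimensional Corollary~\ref{Condexpforone} is replaced by the multidimensional Theorem~\ref{qSBCondexp}, and Theorem~\ref{thSniady} is applied to several Hilbert-space directions simultaneously rather than to a single variable. The conditional-expectation algebra is then identical to the one-dimensional case.

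On the matrix side, I enlarge the probability space to carry two independent copies $\mathbf{Y}_1^{(N)},\mathbf{Y}_2^{(N)}$ of a $(t,0)$-elliptic Gaussian matrix field (constructed like $\mathbf{X}^{(N)}$ but with variance $t$), both independent of $\mathbf{Z}^{(N)}$. The classical two-parameter Segal-Bargmann transform acts on $L^2(\mathbf{X})$ by convolution against $\gamma_{d^N}^{\sigma,t}$; applying this entrywise (i.e.\ tensored with the identity of $\mathbb{M}_{d^N}$) and analytically continuing gives
$$\mathscrbf{S}_{d^N}^{s,t}\bigl(P(\mathbf{X}^{(N)}(h):h\in H)\bigr)(\mathbf{Z}^{(N)}) = \mathbb{E}\bigl[P(\mathbf{Z}^{(N)}(h)+\mathbf{Y}_1^{(N)}(h):h\in H)\bigm|\mathbf{Z}^{(N)}\bigr].$$
Expanding $\|\mathscrbf{S}_{d^N}^{s,t}(P(\mathbf{X}^{(N)}(h)))-Q(\mathbf{Z}^{(N)}(h))\|^2$ and using the second independent copy $\mathbf{Y}_2^{(N)}$ to unfold the square of the conditional expectation yields, exactly as in the proof of Theorem~\ref{theoremtwo},
$$\bigl\|\mathscrbf{S}_{d^N}^{s,t}(P(\mathbf{X}^{(N)}(h)))-Q(\mathbf{Z}^{(N)}(h))\bigr\|^2 = \mathbb{E}\bigl[(P(\mathbf{Z}^{(N)}+\mathbf{Y}_1^{(N)})-Q(\mathbf{Z}^{(N)}))(P(\mathbf{Z}^{(N)}+\mathbf{Y}_2^{(N)})-Q(\mathbf{Z}^{(N)}))^*\bigr].$$

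Now I choose a jointly $q$-Gaussian, $q$-independent system $\mathbf{X}_q,\mathbf{Y}_{q,1},\mathbf{Y}_{q,2},\mathbf{Z}_q$ with the required elliptic types $(s,0),(t,0),(t,0),(s-t/2,t/2)$. Since $P$ and $Q$ involve only finitely many $h_1,\ldots,h_n\in H$, I apply Theorem~\ref{thSniady} to the finite family of self-adjoint variables $\{\mathbf{X}_q(h_k),\mathbf{Y}_{q,1}(h_k),\mathbf{Y}_{q,2}(h_k),\Re\mathbf{Z}_q(h_k),\Im\mathbf{Z}_q(h_k)\}_{k=1}^n$ and to its matrix approximation, whose mixed covariance matches by construction of the random-matrix fields in Section~\ref{RandomMatrixModel}. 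This yields joint convergence in noncommutative distribution, which produces on one hand the norm identity $\|P(\mathbf{X}^{(N)}(h))\|\to\|P(\mathbf{X}_q(h))\|$, and on the other hand
$$\bigl\|\mathscrbf{S}_{d^N}^{s,t}(P(\mathbf{X}^{(N)}(h)))-Q(\mathbf{Z}^{(N)}(h))\bigr\|^2 \;\longrightarrow\; \tau\bigl[(P(\mathbf{Z}_q+\mathbf{Y}_{q,1})-Q(\mathbf{Z}_q))(P(\mathbf{Z}_q+\mathbf{Y}_{q,2})-Q(\mathbf{Z}_q))^*\bigr].$$

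To conclude, I use Theorem~\ref{qSBCondexp} to write $Q(\mathbf{Z}_q)=\tau[P(\mathbf{Z}_q+\mathbf{Y}_{q,1})\mid \mathbf{Z}_q]$, and then Corollary~\ref{CondExpCor} applied to the three $q$-independent sets $\mathbf{Z}_q,\mathbf{Y}_{q,1},\mathbf{Y}_{q,2}$ to extend this to $Q(\mathbf{Z}_q)=\tau[P(\mathbf{Z}_q+\mathbf{Y}_{q,1})\mid \mathbf{Z}_q,\mathbf{Y}_{q,2}]$. The first factor in the limiting trace then has conditional expectation $0$ onto $L^2(\mathbf{Z}_q\cup\mathbf{Y}_{q,2},\tau)$, while the second factor lies in that subspace, so the trace vanishes. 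I expect the main obstacle to be organising the simultaneous application of Theorem~\ref{thSniady} to the full Hermitian family indexed by several directions $h_k$ and by the real/imaginary decomposition of $\mathbf{Z}^{(N)}$, and matching the covariance structure $\langle\cdot,\cdot\rangle_\sigma$ with the variance conventions of the elliptic $q$-Gaussian variables; once that bookkeeping is in place, the conditional-expectation finale is a direct transcription of the one-dimensional argument.
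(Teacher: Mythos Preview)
Your proposal is correct and follows essentially the same route as the paper: the paper also represents $\mathscrbf{S}_{d^N}^{s,t}(P(\mathbf{X}^{(N)}))$ as a conditional expectation (this is stated and proved separately as Lemma~\ref{LemmaCondMat}), introduces an independent copy of the $(t,0)$-elliptic matrix field to unfold the squared norm, applies \'Sniady's Theorem~\ref{thSniady} to the finite Hermitian family $\{\mathbf{Y}^{(N)}(h_k),\mathbf{W}^{(N)}(h_k),\Re\mathbf{Z}^{(N)}(h_k),\Im\mathbf{Z}^{(N)}(h_k)\}$, and finishes with Theorem~\ref{qSBCondexp} together with the conditional-expectation extension (Proposition~\ref{CondExpProp}/Corollary~\ref{CondExpCor}) exactly as you describe. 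The only cosmetic difference is that the paper treats the norm convergence $\|P(\mathbf{X}^{(N)})\|\to\|P(\mathbf{X}_q)\|$ by a separate invocation of Theorem~\ref{thSniady} on $\{\mathbf{X}^{(N)}(h)\}$ alone, rather than bundling $\mathbf{X}_q$ into the same $q$-independent system.
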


\begin{proof}Remark that, for all $M,N\in\mathbb{H}_{d^N}$, and all $h,k\in H$, we have
$$\mathbb{E}\left[\Tr(M\mathbf{X}^{(N)}(h))\Tr(N\mathbf{X}^{(N)}(k))\right]=\tau[\mathbf{X}_q(h)\mathbf{X}_q(k)]\langle M,N\rangle_{\sigma}.$$
We can apply Theorem~\ref{thSniady} which says that the random matrices $\{\mathbf X^{(N)}(h):h\in H\}$ converge in noncommutative distribution to $\{\mathbf X_q(h):h\in H\}$. In particular, we have the following convergences:
\begin{align*}\lim_{N\to \infty}\left\|P(\mathbf{X}^{(N)}(h):h\in H)\right\|_{L^2(\mathbf{X})\otimes \mathbb{M}_{d^N}}=\left\|P(\mathbf{X}_q(h):h\in H)\right\|_{L^2(\mathbf{X}_q,\tau)}
\end{align*}

The proof of the second limit uses the following lemma. Let $\mathbf{Y}:H\otimes_{\mathbb{R}} \mathbb{H}_{d^N}\to L^2(\mathbf{Y})\subset L^2(\Omega,\mathcal{F},\mathbb{P})$ be a Gaussian $(t,0)$-elliptic system independent from $(\mathbf Z^{(N)}(h))_{h\in H}$, and define $$\mathbf{Y}^{(N)}:H\to L^2(\mathbf{Y})\otimes_{\mathbb{R}} \mathbb{H}_{d^N}\simeq L^2(\mathbf{Y})\otimes_{\mathbb{C}} \mathbb{M}_{d^N}\subset L^2(\Omega,\mathbb{P}; \mathbb{M}_{d^N})$$ by duality as the unique linear map $\mathbf{Y}^{(N)}$ from $H$ to the random variables with value in $\mathbb{M}_{d^N}$ such that
$\mathbf{Y}(h\otimes M)=\Tr(M\mathbf{Y}^{(N)}(h)).$

\begin{lemma}\label{LemmaCondMat}For all $P\in \C[ x_h:h\in H]$, we have
\begin{equation}\mathscrbf{S}_{d^N}^{s,t}\Big(P(\mathbf{X}^{(N)}(h):h\in H)\Big)=\mathbb{E}\Big[P(\mathbf{Z}^{(N)}(h)+\mathbf{Y}^{(N)}(h):h\in H)\Big|\mathbf{Z}^{(N)}(h):h\in H\Big].\label{SegalCondMat}\end{equation}
\end{lemma}

\begin{proof}[Proof of Lemma~\ref{LemmaCondMat}]One can apply \eqref{SegalCond} for each coordinate of $P(\mathbf{X}^{(N)}(h):h\in H)$ in any basis of $\mathbb{M}_{d^N}$. Alternatively, one can reason as follows.

Because the formula only involves finitely many variables $h$ for each $P\in \C[ x_h:h\in H]$, it is enough to prove the formula for finite-dimensional Hilbert spaces $H$. Without loss of generality, we take the particular case of Section~\ref{SBproba}: $\mathbf{X}(h\otimes M)$ is the linear functional $x\otimes N\mapsto \langle x,h\rangle_H \Tr(NM)$ defined on $(H\otimes \mathbb{H}_{d^N},\mathcal{B},\gamma_s)$, $\mathbf{Y}(h\otimes M)$ is the linear functional $x\otimes N\mapsto \langle x,h\rangle_H \Tr(NM)$ defined on $(H\otimes \mathbb{H}_{d^N},\mathcal{B},\gamma_t)$ and $\mathbf{Z}(h\otimes M)$ the linear functional $z\otimes N\mapsto \langle z,h\rangle \Tr(N^*M)$ defined on $(H^{\mathbb{C}}\otimes \mathbb{M}_{d^N},\mathcal{B},\gamma_{s-t/2,t/2})$. We consider then the matrix-valued random variables $\mathbf{X}^{(N)}(h):x\otimes M\mapsto \langle x,h\rangle_H \cdot M$, $\mathbf{Y}^{(N)}(h):x\otimes M\mapsto \langle x,h\rangle_H\cdot  M$ and $\mathbf{Z}^{(N)}(h):z\otimes M\mapsto \langle z,h\rangle_{H^{\mathbb{C}}}\cdot M$.

Let $P\in \mathbb{C}[X_h:h\in H]$. We use here the definition \eqref{defconv} of the Segal-Bargmann transform $\mathscr{S}^{s,t}$, which is also valid for $\mathscrbf{S}^{s,t}$ by linearity: for all $z\otimes M \in H\otimes \mathbb{H}_{d^N} $,
\begin{align*}\mathscrbf{S}^{s,t}(P(\mathbf{X}^{(N)}(h):h\in H))(z\otimes M)&=\int_{H\otimes \mathbb{H}_{d^N}}P(\mathbf{X}^{(N)}(h):h\in H)(z\otimes M-x\otimes N)\diff \gamma_t (x\otimes N)\\
&=\int_{H\otimes \mathbb{H}_{d^N}}P(\langle z,h\rangle\cdot M-\langle x,h\rangle\cdot N:h\in H)\diff \gamma_t (x\otimes N)\end{align*}
$$=\int_{H\otimes \mathbb{H}_{d^N}}P\Big((\mathbf{Z}^{(N)}(h))(z\otimes M)-(\mathbf{Y}^{(N)}(h))(x\otimes N):h\in H\Big)\diff \gamma_t (x\otimes N).$$
The last line is also valid for all $z\otimes M \in H^{\mathbb{C}}\otimes \mathbb{M}_{d^N} $, since each side is analytic in $z\otimes M$. We recognize the wanted conditioning of Lemma~\ref{LemmaCondMat}.
\end{proof}

Let us consider an independent copy $\mathbf{W}^{(N)}$ of $\mathbf{Y}^{(N)}$. We consider also two $q$-Gaussian $(t,0)$-elliptic system $\mathbf{W}_q$ and $\mathbf{Y}_q$ which are $q$-independent from each others and from $\mathbf{Z}_q$.
Remark that, for all $M,N\in\mathbb{H}_{d^N}$, and all $h,k\in H$, we have
$$\mathbb{E}\left[\Tr(M\mathbf{U}^{(N)}(h))\Tr(N\mathbf{V}^{(N)}(k))\right]=\tau[\mathbf{U}_q(h)\mathbf{V}_q(k)]\langle M,N\rangle_{\sigma},$$
where the symbols $\mathbf{U}$ and $\mathbf{V}$ can be replaced by any from the symbols $\mathbf{W}_q,\mathbf{Y}_q,\Re \mathbf{Z}_q$ and $ \Im \mathbf{Z}_q$. Thus, we can apply Theorem~\ref{thSniady} which says that the random matrices $\{\mathbf W^{(N)}(h),\mathbf Y^{(N)}(h),\Re \mathbf Z^{(N)}(h),\Im \mathbf Z^{(N)}(h):h\in H\}$ converge in noncommutative distribution to $\{\mathbf W_q(h),\mathbf Y_q(h),\Re \mathbf Z_q(h),\Im \mathbf Z_q(h):h\in H\}$. In particular, we have the following convergence:
\begin{align*}
&\lim_{N\to \infty}\left\|\mathscrbf{S}_{d^N}^{s,t}(P(\mathbf{X}^{(N)}(h):h\in H))-Q(\mathbf{Z}^{(N)}(h):h\in H)\right\|_{\mathcal{H}L^2(\mathbf{Z})\otimes \mathbb{M}_{d^N}}\\
&=\lim_{N\to \infty}\left\|\mathbb{E}\left[P(\mathbf{Z}^{(N)}(h)+\mathbf{Y}^{(N)}(h):h\in H)\left|\mathbf{Z}\right. \right]-Q(\mathbf{Z}^{(N)}(h):h\in H)\right\|_{\mathcal{H}L^2(\mathbf{Z})\otimes \mathbb{M}_{d^N}}\\
&=\lim_{N\to \infty}\left\|\mathbb{E}\left[P(\mathbf{Z}^{(N)}(h)+\mathbf{Y}^{(N)}(h):h\in H)-Q(\mathbf{Z}^{(N)}(h):h\in H)|\mathbf{Z} \right]\right\|_{\mathcal{H}L^2(\mathbf{Z})\otimes \mathbb{M}_{d^N}}\\
&=\lim_{N\to \infty}\mathbb{E}\left[\frac{1}{d^N}\Tr \left(\left(P(\mathbf{Z}^{(N)}(h)+\mathbf{Y}^{(N)}(h):h\in H)-Q(\mathbf{Z}^{(N)}(h):h\in H)\right)\right.\right.\\
&\hspace{2cm}\cdot\left.\left.\left(P(\mathbf{Z}^{(N)}(h)+\mathbf{W}^{(N)}(h):h\in H)-Q(\mathbf{Z}^{(N)}(h):h\in H)\right)^*\right)\right]\\
&=\tau \left[\left(P(\mathbf{Z}_q(h)+\mathbf{Y}_q(h):h\in H)-Q(\mathbf{Z}_q(h):h\in H)\right)\right.\\
&\hspace{2cm}\cdot\left.\left(P(\mathbf{Z}_q(h)+\mathbf{W}_q(h):h\in H)-Q(\mathbf{Z}_q(h):h\in H)\right)^*\right]\\
&=\left\|\tau\left[P(\mathbf{Z}_q(h)+\mathbf{Y}_q(h):h\in H)-Q(\mathbf{Z}_q(h):h\in H)|\mathbf{Z}_q \right]\right\|_{\mathcal{H}L^2(\mathbf{Z}_q,\tau)}\\
&=\left\|\tau\left[P(\mathbf{Z}_q(h)+\mathbf{Y}_q(h):h\in H)|\mathbf{Z}_q \right]-Q(\mathbf{Z}_q(h):h\in H)\right\|_{\mathcal{H}L^2(\mathbf{Z}_q,\tau)}.
\end{align*}
The last quantity vanishes because Theorem~\ref{qSBCondexp} tells us that$$Q(\mathbf{Z}_q(h):h\in H)=\mathscr{S}_q^{s,t}(P(\mathbf{X}_q))=\tau\left[P(\mathbf{Z}_q(h)+\mathbf{Y}_q(h):h\in H)|\mathbf{Z}_q \right].$$
\end{proof}

\section{Mixture of Classical and Free Segal-Bargmann Transform}\label{Mixsection}
In this section, we shall define the Segal-Bargmann transform for a mixture of classical and free random variables and then we recover the $q$-Segal Bargmann trasnform in the limit.

\subsection{The Mixed $q$-Deformed Segal-Bargmann Transform}
Let $Q=(q_{ij})_{i,j\in I}$ be a symmetric matrix with elements in $[-1,1]$. We consider a complex Hilbert space $K$ with an orthonormal basis $\{e_i\}_{i\in I}$, the Fock space $\mathcal F_{Q}(K)$, and the set of mixed $q$-Gaussian variables $\{X_i\}_{i\in I }$ acting on $\mathcal F_{Q}(K)$ as defined in Section~\ref{mixedqgaussians}. The set $\{\sqrt{s}X_i\}_{i\in I }$ are the mixed $q$-Gaussian variables of variance $s$. Remark that the map $A\mapsto A(\Omega)$ extend to a unitary isomorphism from $L^2(\{\sqrt{s}X_i\}_{i\in I},\tau)$ to $\mathcal F_{Q}(K)$.

As in Section~\ref{GaussianvariablesandWickproduct}, we will define the mixed $q$-Gaussian $(s,t)$-elliptic variables as a set of variables $\{Z_i\}_{i\in I}$ indexed by $I$ such that $\{\Re Z_i,\Im Z_i\}_{i\in I}$ are a set of mixed $q$-Gaussian variables with prescribed variance. The first step is to replace the index set $I$ by the index set $I\times \{1,2\}$, and the matrix $Q$ by the matrix
$$\tilde{Q} = 
\begin{pmatrix}
Q & Q\\
Q & Q
\end{pmatrix}  =  Q\otimes \begin{pmatrix}
1 & 1\\
1 & 1
\end{pmatrix} .$$ We consider the complex Hilbert space $K^2$ with an orthonormal basis $\{e_{(i,1)},e_{(i,2)}\}_{i\in I}$. Considering the Fock space $\mathcal F_{\tilde{Q}}(K^2)$, we define the set of mixed $q$-Gaussian variables $\{X_{(i,1)},X_{(i,2)}\}_{i\in I }$ acting on $\mathcal F_{\tilde{Q}}(K^2)$ as defined in Section~\ref{mixedqgaussians}. Finally, we set the mixed $q$-Gaussian $(s-t/2,t/2)$-elliptic variables
$$Z_i=\sqrt{s-t/2}X_{(i,1)}+i\sqrt{t/2}X_{(i,2)}.$$
Remark that the map $A\mapsto A(\Omega)$ extend to a unitary isomorphism from $\mathcal{H}L^2(\{Z_i\}_{i\in I},\tau)$ to $\mathcal F_{\tilde{Q}}(K^2)$.

We are ready to define the mixed $q$-deformed Segal-Bargmann transform.

\begin{definition}
The mixed $q$-deformed Segal-Bargmann transform $\S_Q^{s,t}$ is the unitary isomorphism so that the following diagram commute:
\begin{displaymath}
    \xymatrix{
        \mathcal F_{Q}(K) \ar@{^{(}->}[rr]^{\delta_Q^{s,t}}  &&\mathcal F_{\tilde{Q}}(K^2)\\
        L^2(\{X_i\}_{i\in I},\tau)  \ar[rr]_{\S^{s,t}_Q} \ar[u]^{A\,\mapsto A\,\Omega}       &&\mathcal{H}L^2(\{Z_i\}_{i\in I},\tau) \ar[u]_{A\,\mapsto A\,\Omega}}
\end{displaymath}
where $\delta_Q^{s,t}$ is the Fock space extension of $\delta_Q^{s,t}(\sqrt{s}e_i) = \sqrt{s-t/2}e_{(i,1)}+i \sqrt{t/2}e_{(i,2)}$, meaning that
$$\delta_Q^{s,t}(h_1\tensor\ldots\tensor h_k) = \delta_Q^{s,t}(h_1)\tensor\ldots\tensor \delta_Q^{s,t}(h_k).$$
\end{definition}

For all $i\in I$, we have
$$H_n^{q_{ii},s}(\sqrt{s}X_i)\Omega=(\sqrt{s}e_{i})^{\otimes n}.$$
Indeed, the definition of the Hermite polynomials is adjusted with the definition \eqref{defannihilation} of the annihilation operator $c^*_{i(1)}$ in such a way that, by a direct induction, for all $n\geq 2$, we have
\begin{align*}H_n^{q_{ii},s}(\sqrt{s}X_i)\Omega=&\sqrt{s}(c_{i}+c^*_{i}) (\sqrt{s}e_{i})^{\otimes n-1}-s\sum_{k=2}^nq^{n-2}(\sqrt{s}e_{i})^{\otimes n-2}\\
=&\sqrt{s}c_{i}(\sqrt{s}e_{i})^{\otimes n-1}\\
=&(\sqrt{s}e_{i})^{\otimes n}.
\end{align*}
Similarly, we have
$$H_n^{q_{ii},s-t}(Z_i)\Omega=(\sqrt{s-t/2}e_{(i,1)}+i \sqrt{t/2}e_{(i,2)})^{\otimes n}.$$
We deduce the following result, which says that restricted on the different $L^2(X_i,\tau)$, the mixed $q$-deformed Segal-Bargmann transform $\S_Q^{s,t}$ coincides with the $q$-deformed Segal-Bargmann transform $\S_{q_{ii}}^{s,t}$.
\begin{proposition}For all $i\in I$ and all polynomial $P$, we have\label{onedimensionSB}
$$\S_Q^{s,t}(P(\sqrt{s}X_i))=(\S_{q_{ii}}^{s,t}P)(Z_i).$$
\end{proposition}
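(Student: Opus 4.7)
The plan is to reduce to a single family of test polynomials, namely the $q_{ii}$-Hermite polynomials of parameter $s$, and then trace the two sides through the commutative diagram that defines $\S_Q^{s,t}$. By linearity of both sides in $P$, and since $\{H_n^{q_{ii},s}\}_{n\geq 0}$ is a basis of $\C[x]$ (they are monic of degree $n$), it suffices to establish the identity for $P=H_n^{q_{ii},s}$, for each $n\geq 0$.

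For the left-hand side, I would apply the vacuum map $A\mapsto A\Omega$. The identity
$$H_n^{q_{ii},s}(\sqrt{s}X_i)\Omega=(\sqrt{s}e_i)^{\otimes n},$$
proved just before the proposition, gives the image in $\mathcal F_Q(K)$. Pushing through $\delta_Q^{s,t}$ and using its multiplicativity on simple tensors yields
$$\delta_Q^{s,t}\bigl((\sqrt{s}e_i)^{\otimes n}\bigr)=\bigl(\delta_Q^{s,t}(\sqrt{s}e_i)\bigr)^{\otimes n}=\bigl(\sqrt{s-t/2}\,e_{(i,1)}+i\sqrt{t/2}\,e_{(i,2)}\bigr)^{\otimes n}.$$
Invoking the second identity proved just before the proposition, this vector equals $H_n^{q_{ii},s-t}(Z_i)\Omega$. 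Since $A\mapsto A\Omega$ from $\mathcal HL^2(\{Z_i\}_{i\in I},\tau)$ to $\mathcal F_{\tilde Q}(K^2)$ is a unitary isomorphism, it follows that
$$\S_Q^{s,t}\bigl(H_n^{q_{ii},s}(\sqrt{s}X_i)\bigr)=H_n^{q_{ii},s-t}(Z_i).$$

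For the right-hand side, the observation recorded right after Definition~\ref{defSqstgen} (namely $\S_q^{s,t}H_n^{q,s}=H_n^{q,s-t}$, which follows from the definition of $\Gamma_q^{s,t}$ as a bilinear generating function of the $q$-Hermite polynomials) gives $\S_{q_{ii}}^{s,t}H_n^{q_{ii},s}=H_n^{q_{ii},s-t}$. Evaluating at $Z_i$ yields $(\S_{q_{ii}}^{s,t}P)(Z_i)=H_n^{q_{ii},s-t}(Z_i)$, matching the left-hand side.

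I do not anticipate a substantive obstacle: the proof is essentially bookkeeping through the commutative diagram, using the two explicit Fock-space identities that the authors have already established immediately above the proposition together with the fact that $\S_q^{s,t}$ maps Hermite polynomials to Hermite polynomials in the one-dimensional setting. The only mildly subtle point is to make sure that $\delta_Q^{s,t}$ really does act multiplicatively on the simple tensor $(\sqrt{s}e_i)^{\otimes n}$ needed here, which is the content of its defining formula.
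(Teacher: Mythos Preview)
Your proposal is correct and follows precisely the argument the paper intends: reduce by linearity to $P=H_n^{q_{ii},s}$, use the two Fock-space identities established just before the proposition to compute the left-hand side through the commutative diagram defining $\S_Q^{s,t}$, and match it with the right-hand side via $\S_{q_{ii}}^{s,t}H_n^{q_{ii},s}=H_n^{q_{ii},s-t}$. The paper itself gives no explicit proof beyond ``We deduce the following result''; your write-up simply makes that deduction explicit.
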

\subsection{The $q$-Segal-Bargmann Transform in the Limit}

Set $I=\mathbb{N}$. We choose $Q=(q_{ij}=q_{ji})_{i,j\in I}$ randomly in $\{-1,+1\}$ or in $\{0,+1\}$, as independent random variables, identically distributed, with $\mathbb{E}[q_{ij}]=q$. We consider the mixed $q$-Gaussian variables $\{\sqrt{s}X_i\}_{i\in I}$ of variance $s$ as defined in the previous section.

\begin{remark}Let us recall first that $q_{ii}=-1,0$ or $1$ means respectively that $X_i$ is a Bernoulli variable, a semicircular variable or a Gaussian variable. Secondly, $q_{ii}=0$ or $1$ means respectively that $X_i$ and $X_j$ are freely independent or classically independent.
\end{remark}

Let us consider the sum
$$X^{(n)}:=\frac{\sqrt{s}X_{1}+\ldots+\sqrt{s}X_{n}}{\sqrt{n}}.$$
These variables define an approximation of a $q$-Gaussian variable. Speicher's central limit theorem (\cite[Theorem 1]{Speicher1992}) makes this statement precise whenever $q_{ij}\in \{-1,+1\}$. If $q_{ij}\in \{0,+1\}$, it is not complicated (using for example the characterisation with cumulants of~\cite{SpeicherWysoczanski2016}) to prove that we fall in the framework of $\Lambda$-freeness of M{\l}otkowski. More precisely, if we define $\Lambda$ to be the set of $\{i,j\}$ such that $q_{ij}=1$ and $i\neq j$, the algebras generated by the different $X_{i}$ are $\Lambda$-free. We can use M{\l}otkowski's central limit theorem (\cite[Theorem 4]{Mlotkowski2004}) and we get the following result.

\begin{theorem}[Theorem 1 of \cite{Speicher1992} and Theorem 4 of \cite{Mlotkowski2004}]Almost surely, the variable $X^{(n)}$ converges to a $q$-Gaussian variable $X$ of variance $s$ in noncommutative distribution in the following sense: for all polynomial $P$, we have
$$\lim_{N\to \infty}\tau\left[P(X^{(n)})\right]=\tau[P(X)]. $$
\end{theorem}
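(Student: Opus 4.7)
By multilinearity it suffices to show $\lim_{n\to\infty}\tau[(X^{(n)})^k]=\tau[X^k]$ almost surely for each fixed $k\geq 1$, and then intersect the resulting full-measure events over the countable family $k\in \mathbb{N}$. Expanding $X^{(n)}=n^{-1/2}(\sqrt{s}X_1+\cdots+\sqrt{s}X_n)$ and invoking the mixed $q$-Gaussian Wick formula \eqref{Wickmixedq}, the Kronecker deltas force each index tuple to be constant on the blocks of a pair partition $\pi$, so
$$\tau[(X^{(n)})^k] = \frac{s^{k/2}}{n^{k/2}} \sum_{\pi \in \mathcal{P}_2(k)} \sum_{j:\pi \to \{1,\ldots,n\}} \prod_{(\beta,\beta')\text{ crossing in }\pi} q_{j(\beta)\, j(\beta')}.$$
This vanishes for odd $k$; for $k=2m$, the injective assignments $j$ (which number $n(n-1)\cdots(n-m+1)$) give the dominant contribution, while non-injective assignments contribute only $O(n^{m-1})$ terms after normalization.

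For the first moment, when $j$ is injective the factors $q_{j(\beta)j(\beta')}$ range over pairwise distinct unordered index pairs and are therefore independent random variables with mean $q$, so $\mathbb{E}\bigl[\prod q_{j(\beta)j(\beta')}\bigr]=q^{\cross(\pi)}$. Summing yields $\mathbb{E}[\tau[(X^{(n)})^{2m}]] = s^m \sum_{\pi\in\mathcal{P}_2(2m)} q^{\cross(\pi)} + O(1/n)$, which is precisely the $q$-Gaussian moment $\tau[X^{2m}]$ in the limit. For the almost-sure upgrade, the plan is to estimate the variance by expanding $\tau[(X^{(n)})^{2m}]^2$ over pairs of pair partitions and pairs of assignments: after the $(\mathbb{E}[\cdot])^2$ subtraction, only those terms survive in which the two assignments' crossing-factor pairs share at least one unordered index pair, which strictly reduces the effective combinatorial exponent in $n$ and yields $\Var = O(1/n)$. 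Borel-Cantelli along a subsequence $n_k = k^2$, combined with a crude deterministic bound on oscillations between consecutive $n$, then promotes this to almost-sure convergence.

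The main obstacle is the combinatorial bookkeeping in the variance estimate, in particular the analysis of partially-coupled assignments when the two pair partitions $\pi_1\neq\pi_2$. A cleaner route, matching the stated attribution, is to invoke directly the deterministic central limit theorems of \cite{Speicher1992} for $q_{ij}\in\{-1,+1\}$ and \cite{Mlotkowski2004} for $q_{ij}\in\{0,+1\}$ (the latter via $\Lambda$-freeness with $\Lambda=\{\{i,j\}:q_{ij}=1\}$, as flagged in the paragraph preceding the theorem). Both results require a deterministic asymptotic hypothesis on the density of the $q_{ij}$-patterns; the strong law of large numbers applied to the i.i.d.\ family $(q_{ij})_{i<j}$ shows that these hypotheses hold with probability one, and the conclusion of the theorem follows on that almost-sure event.
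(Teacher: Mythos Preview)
The paper does not supply a proof of this statement: it is quoted as a known result, with the preceding paragraph explaining that Speicher's theorem handles the $\{-1,+1\}$ case directly, and that the $\{0,+1\}$ case fits M{\l}otkowski's $\Lambda$-free framework (with $\Lambda=\{\{i,j\}:q_{ij}=1,\ i\neq j\}$). Your final paragraph is therefore exactly what the paper does, so in that sense your proposal is correct and aligned with the paper.

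Your first two paragraphs go further and sketch an actual argument, which is essentially Speicher's original proof: expand moments via the mixed Wick formula, identify the dominant injective-assignment contribution, compute the expectation, and then control the variance to upgrade to almost-sure convergence. That sketch is sound in outline. One small inaccuracy in your last paragraph: Speicher's Theorem~1 is already the almost-sure statement with i.i.d.\ random signs, not a deterministic CLT with a density hypothesis to be verified afterwards; so for the $\{-1,+1\}$ case there is no separate SLLN step---you simply invoke his result. For the $\{0,+1\}$ case, M{\l}otkowski's theorem does require a deterministic density condition on $\Lambda$, and there your SLLN verification is indeed the missing ingredient; the paper takes this reduction for granted.
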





We consider now the mixed $q$-Gaussian $(s-t/2,t/2)$-elliptic variables $\{Z_i\}_{i\in I}$, where the relations are governed by the matrix $\tilde{Q}$, and we set
$$Z^{(n)}:=\frac{Z_{1}+\ldots+Z_{n}}{\sqrt{n}}.$$
The entries of $\tilde{Q}$ are not any more independent but only block-independent. Nevertheless, as used in~\cite[Section 4.2]{Kemp2005}, a straightforward modification of Speicher's proof and of  M{\l}otkowski's proof generalizes the theorem to this case.

\begin{theorem}[Theorem 1 of \cite{Speicher1992} and Theorem 4 of \cite{Mlotkowski2004}]Almost surely, the variable $Z^{(n)}$ converges to a $q$-Gaussian $(s-t/2,t/2)$-elliptic variable $Z$ in noncommutative distribution in the following sense: for all polynomial $P$, we have
$$\lim_{N\to \infty}\tau\left[P(Z^{(n)})\right]=\tau[P(Z)]. $$
\end{theorem}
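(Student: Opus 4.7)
The plan is to reduce the noncommutative convergence to an almost sure law of large numbers for sums indexed by the random entries $q_{ij}$. Decompose
\[
Z^{(n)} = \sqrt{s-t/2}\,Y_1^{(n)} + i\sqrt{t/2}\,Y_2^{(n)},\qquad Y_a^{(n)} := \frac{1}{\sqrt{n}}\sum_{i=1}^n X_{(i,a)},\ a\in\{1,2\}.
\]
By linearity, it suffices to prove that almost surely, for every $k\geq 1$ and every $(a_1,\ldots,a_k)\in\{1,2\}^k$, the mixed moment $\tau[Y_{a_1}^{(n)}\cdots Y_{a_k}^{(n)}]$ converges to the mixed Wick moment of a pair $(Y_1,Y_2)$ of $q$-independent $q$-Gaussian variables of variance $1$; the countable collection of such sequences allows a uniform choice of null set.

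Apply the mixed Wick formula \eqref{Wickmixedq} with the matrix $\tilde{Q}$, using $\tilde{q}_{(i,a)(j,b)}=q_{ij}$ and $\delta_{(i,a),(j,b)}=\delta_{i,j}\delta_{a,b}$. After parametrising each $\pi\in\mathcal{P}_2(k)$ by the common index value $j_\alpha\in\{1,\ldots,n\}$ on its $\alpha$-th pair, one obtains
\[
\tau[Y_{a_1}^{(n)}\cdots Y_{a_k}^{(n)}] = \sum_{\pi\in\mathcal{P}_2(k)}\Bigl(\prod_{\{p,q\}\in\pi}\delta_{a_p,a_q}\Bigr)\,\frac{1}{n^{k/2}}\sum_{j_1,\ldots,j_{k/2}=1}^n\prod_{\{\alpha,\beta\}\in\cross(\pi)}q_{j_\alpha,j_\beta},
\]
where $\{\alpha,\beta\}\in\cross(\pi)$ ranges over pairs of blocks of $\pi$ that cross. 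Split the inner sum according to whether the $j_\alpha$ are pairwise distinct: the non-distinct contribution has $O(n^{k/2-1})$ bounded terms and vanishes after division by $n^{k/2}$.

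On the distinct tuples, $\{q_{j_\alpha,j_\beta}\}_{\{\alpha,\beta\}\in\cross(\pi)}$ is a family of independent bounded random variables, each of mean $q$, because distinct unordered pairs in $\{1,\ldots,n\}$ index distinct IID entries of $Q$. A Hoeffding-type concentration estimate for this bounded $U$-statistic, combined with Borel--Cantelli, yields
\[
\frac{1}{n^{k/2}}\sum_{\substack{(j_1,\ldots,j_{k/2})\in[n]^{k/2}\\ \text{distinct}}}\prod_{\{\alpha,\beta\}\in\cross(\pi)}q_{j_\alpha,j_\beta}\;\xrightarrow[n\to\infty]{\text{a.s.}}\;q^{\cross(\pi)}.
\]
Summing over $\pi$ then produces $\sum_{\pi\in\mathcal{P}_2(k)}q^{\cross(\pi)}\prod_{\{p,q\}\in\pi}\delta_{a_p,a_q}$, which is exactly the mixed Wick moment of $q$-independent $q$-Gaussian variables $(Y_1,Y_2)$ of variance $1$. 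Setting $Z = \sqrt{s-t/2}\,Y_1 + i\sqrt{t/2}\,Y_2$ yields a $q$-Gaussian $(s-t/2,t/2)$-elliptic variable as required.

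The main obstacle is the almost sure law of large numbers step; this is precisely the ``straightforward modification'' of \cite{Speicher1992} and \cite{Mlotkowski2004} referred to in the statement. The block structure of $\tilde{Q}$, which prevents a direct invocation of those theorems since $\tilde{q}_{(i,1)(j,1)}=\tilde{q}_{(i,2)(j,2)}=\tilde{q}_{(i,1)(j,2)}=q_{ij}$ are equal rather than independent, is harmless because this repetition only affects coincident index pairs $\{i,j\}$ and so contributes negligibly to a sum of order $n^{k/2}$.
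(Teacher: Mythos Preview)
Your proposal is correct and is precisely the ``straightforward modification of Speicher's proof'' that the paper invokes but does not write out; the paper gives no proof of this statement beyond citing \cite{Speicher1992} and \cite{Mlotkowski2004}. Your reduction via \eqref{Wickmixedq} to the normalized sum
\[
\frac{1}{n^{k/2}}\sum_{j_1,\ldots,j_{k/2}=1}^{n}\ \prod_{\{\alpha,\beta\}\in\cross(\pi)} q_{j_\alpha j_\beta}
\]
is exactly the quantity Speicher analyses, and you have correctly observed that the block structure of $\tilde{Q}$ disappears at this stage because $\tilde q_{(i,a)(j,b)}=q_{ij}$ irrespective of $a,b$.

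One small caveat: the phrase ``Hoeffding-type concentration for this bounded $U$-statistic'' is imprecise. The summands indexed by distinct tuples are not independent, since two tuples sharing a pair $\{j_\alpha,j_\beta\}$ share the random variable $q_{j_\alpha j_\beta}$. What actually works, and what Speicher does, is a direct variance estimate: two tuples contribute nonzero covariance only if they share at least one unordered pair, which forces at least two coordinates to coincide, leaving $O(n^{k-2})$ correlated pairs out of $O(n^{k})$; hence the variance is $O(n^{-2})$, and Chebyshev plus Borel--Cantelli gives the almost sure limit $q^{\cross(\pi)}$. Replacing ``Hoeffding'' by this second-moment argument makes the step rigorous.
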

The following theorem says that the mixed $q$-Segal-Bargmann transform is also an approximation of the $q$-deformed case.
\begin{theorem}\label{theoremfour}Set $I=\mathbb{N}$. We choose $Q=(q_{ij}=q_{ji})_{i,j\in I}$ randomly in $\{-1,+1\}$ or in $\{0,+1\}$, as independent random variables, and identically distributed with $\mathbb{E}[q_{ij}]=q$ for $i>j$. We consider the mixed $q$-Gaussian variables $\{\sqrt{s}X_i\}_{i\in I}$ of variance $s$, the mixed $q$-Gaussian $(s-t/2,t/2)$-elliptic variables
$\{Z_i\}_{i\in I}$ and the mixed $q$-Segal-Bargmann transform $\S_Q^{s,t}:L^2(\{X_i\}_{i\in I},\tau)\to \mathcal{H}L^2(\{Z_i\}_{i\in I},\tau)$.

Almost surely, the Segal-Bargmann transform $\mathscr{S}_{Q}^{s,t}$ converges to the $q$-deformed Segal-Bargmann transform $\mathscr{S}_q^{s,t}$ in the following sense: considering the sums
$$X^{(n)}:=\frac{\sqrt{s}X_{1}+\ldots+\sqrt{s}X_{n}}{\sqrt{n}}\ \text{ and }\ Z^{(n)}:=\frac{Z_{1}+\ldots+Z_{n}}{\sqrt{n}},$$
for all polynomial $P$, we have $\displaystyle\lim_{n\to \infty}\left\|\mathscr{S}_{Q}^{s,t}\left(P(X^{(n)})\right)-\mathscr{S}_{q}^{s,t}P(Z^{(n)})\right\|_{\mathcal{H}L^2(\{Z_i\}_{i\in I},\tau)}=0.$
\end{theorem}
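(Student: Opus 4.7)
The plan is to reduce both $\mathscr{S}_Q^{s,t}(P(X^{(n)}))$ and $\mathscr{S}_q^{s,t}P(Z^{(n)})$ to the common asymptotic quantity $\sum_k a_k(Z^{(n)})^{\diamond k}$ by going through the Fock space realisation of $\mathscr{S}_Q^{s,t}$ and exploiting the almost sure averaging of the random entries $q_{ij}$. Throughout, identify $L^2(\{X_i\},\tau)$ with $\mathcal{F}_Q(K)$ and $\mathcal{H}L^2(\{Z_i\},\tau)$ with $\mathcal{F}_{\tilde Q}(K^2)$ via the vector-state isomorphisms $A\mapsto A\Omega$.

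\textit{Step 1 (Wick intertwining).} Since creation operators acting on $\Omega$ produce pure tensors, the mixed Wick products satisfy $\sqrt{s}X_{i_1}\diamond\cdots\diamond\sqrt{s}X_{i_k}\cdot\Omega = \sqrt{s}e_{i_1}\otimes\cdots\otimes\sqrt{s}e_{i_k}$ and $Z_{i_1}\diamond\cdots\diamond Z_{i_k}\cdot\Omega=\bigotimes_{a=1}^k(\sqrt{s-t/2}e_{(i_a,1)}+i\sqrt{t/2}e_{(i_a,2)})$. Because $\delta_Q^{s,t}$ acts tensor-factorwise by definition, one immediately obtains
$$\mathscr{S}_Q^{s,t}\bigl(\sqrt{s}X_{i_1}\diamond\cdots\diamond\sqrt{s}X_{i_k}\bigr) = Z_{i_1}\diamond\cdots\diamond Z_{i_k}.$$
Setting $(X^{(n)})^{\diamond k} := n^{-k/2}\sum_{i_1,\ldots,i_k}\sqrt{s}X_{i_1}\diamond\cdots\diamond\sqrt{s}X_{i_k}$ and defining $(Z^{(n)})^{\diamond k}$ analogously, linearity gives the exact identity $\mathscr{S}_Q^{s,t}((X^{(n)})^{\diamond k})=(Z^{(n)})^{\diamond k}$.

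\textit{Step 2 (Hermite $\approx$ Wick).} Next I would prove that almost surely, $H_k^{q,s}(X^{(n)}) - (X^{(n)})^{\diamond k}\to 0$ in $L^2(\{X_i\},\tau)$ and likewise $H_k^{q,s-t}(Z^{(n)}) - (Z^{(n)})^{\diamond k}\to 0$ in $\mathcal{H}L^2(\{Z_i\},\tau)$. In Fock space, write $X^{(n)}=\sqrt{s}(C+C^*)$ with $C=n^{-1/2}\sum_i c_i$, and set $\bar e^{(n)}=n^{-1/2}\sum_i e_i$; then $C((\bar e^{(n)})^{\otimes j})=(\bar e^{(n)})^{\otimes(j+1)}$ exactly, and using \eqref{defannihilation},
$$C^*\bigl((\bar e^{(n)})^{\otimes j}\bigr) = \lambda_j^{(n)}(\bar e^{(n)})^{\otimes(j-1)} + \rho_j^{(n)},$$
where $\lambda_j^{(n)}\to[j]_q$ a.s.\ by the strong law applied to the i.i.d.\ empirical averages $\frac{1}{n}\sum_a q_{ak_1}\cdots q_{ak_{\ell-1}}$ ($\ell=1,\ldots,j$), and $\|\rho_j^{(n)}\|=O(n^{-1/2})$ a.s.\ by Bernstein-type concentration---the boundedness of $q_{ij}\in\{-1,+1\}$ or $\{0,+1\}$ being essential. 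Inductively applying the $q$-Hermite recursion $H_{k+1}^{q,s}=xH_k^{q,s}-s[k]_q H_{k-1}^{q,s}$ yields $H_k^{q,s}(X^{(n)})\Omega=s^{k/2}(\bar e^{(n)})^{\otimes k}+o(1)$ in Fock-space norm, while $(X^{(n)})^{\diamond k}\Omega=s^{k/2}(\bar e^{(n)})^{\otimes k}$ exactly. The identical argument on $\mathcal{F}_{\tilde Q}(K^2)$ handles the $Z^{(n)}$ side.

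\textit{Step 3 (conclusion).} Expand $P=\sum_k a_k H_k^{q,s}$ in the (finite) $q$-Hermite basis. By the proof of Corollary~\ref{Condexpforone}, $\mathscr{S}_q^{s,t}H_k^{q,s}=H_k^{q,s-t}$, hence $\mathscr{S}_q^{s,t}P=\sum_k a_k H_k^{q,s-t}$. Combining Steps~1 and~2 with the unitarity (in particular $L^2$-continuity) of $\mathscr{S}_Q^{s,t}$:
$$\mathscr{S}_Q^{s,t}(P(X^{(n)})) = \sum_k a_k (Z^{(n)})^{\diamond k}+o_n(1)=\mathscr{S}_q^{s,t}P(Z^{(n)})+o_n(1)$$
in $\mathcal{H}L^2(\{Z_i\},\tau)$, proving the theorem. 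The main obstacle is Step~2: upgrading the Speicher/M\l{}otkowski moment CLT (a.s.\ convergence in distribution) to the quantitative $L^2$-estimates above requires the almost sure concentration for products of $q_{ij}$ described; this is where the proof departs substantively from the pure $q$-Gaussian arguments of Theorems~\ref{theoremtwo} and~\ref{theoremthree}.
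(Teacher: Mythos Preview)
Your Step~1 is correct and is exactly the Fock-space computation the paper carries out (there phrased via the polynomials $P_{i(1),\ldots,i(n)}$ and the map $\delta_Q^{s,t}$). The difference between your approach and the paper's lies entirely in how one gets from Step~1 to the conclusion, and here your Step~2 contains a genuine gap.

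The problem with Step~2 is that the remainder $\rho_j^{(n)}$ is a vector in the $(j-1)$-particle space whose coefficients are the fluctuations $\frac{1}{n}\sum_a q_{ak_1}\cdots q_{ak_{\ell-1}}-q^{\ell-1}$, and these depend on the $(j-1)$-tuple $(k_1,\ldots,k_{j-1})$ of remaining indices. To get $\|\rho_j^{(n)}\|_{\mathcal F_Q}\to0$ you need uniform control over $O(n^{j-1})$ such tuples, in a norm $\|\cdot\|_{\mathcal F_Q}$ that is itself a random sum over permutations weighted by products of the $q_{ij}$. A single application of the strong law does not give this; you would need a concentration inequality plus a careful union bound (or a direct second-moment computation of $\|\rho_j^{(n)}\|^2$ in the random inner product), and you have not supplied either. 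The statement ``$O(n^{-1/2})$ a.s.\ by Bernstein-type concentration'' is precisely the missing lemma, and it is not obvious.

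The paper sidesteps this difficulty entirely by a decoupling trick, parallel to the proofs of Theorems~\ref{theoremtwo} and~\ref{theoremthree}. It enlarges the Fock space to $\mathcal F_R(K^4)$, introducing two auxiliary families $Y^{(n)}=\frac{1}{\sqrt n}\sum\sqrt{t}X_{(i,3)}$ and $W^{(n)}=\frac{1}{\sqrt n}\sum\sqrt{t}X_{(i,4)}$, and proves the exact identity
\[
\bigl\|\mathscr{S}_Q^{s,t}(P(X^{(n)}))-Q(Z^{(n)})\bigr\|^2
=\tau\bigl[(P(Z^{(n)}+Y^{(n)})-Q(Z^{(n)}))^*\,(P(Z^{(n)}+W^{(n)})-Q(Z^{(n)}))\bigr]
\]
for every polynomial $Q$. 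The right-hand side is now a \emph{mixed moment} in the variables $Y^{(n)},W^{(n)},\Re Z^{(n)},\Im Z^{(n)}$, so the Speicher/M\l{}otkowski CLT in distribution (almost sure convergence of moments) is already enough to pass to the limit---no quantitative $L^2$ estimate is needed. The limit is then $\tau[(P(Z+Y)-Q(Z))^*(P(Z+W)-Q(Z))]$ with $q$-independent $q$-Gaussians $Y,W,Z$, and this vanishes for $Q=\mathscr{S}_q^{s,t}P$ by the conditional-expectation identity of Corollary~\ref{Condexpforone}, exactly as in the $q$-Gaussian case. In short: the paper trades your hard analytic estimate for an algebraic identity plus the existing moment CLT.
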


\begin{remark}\label{remarkmix}\begin{itemize}
\item We can choose $q_{ii}$ arbitrarily. For example, if we choose $q_{ii}=1$, Proposition~\ref{onedimensionSB} tells us that $\mathscr{S}_{Q}^{s,t}$ restricted to $L^2(\sqrt{s}X_{i},\tau)$ is the classical Segal-Bargmann transform $\mathscr{S}_{q_{ii}}^{s,t}$.

\item Now, assume that $q_{ij}\in \{0,+1\}$. We define $\Lambda$ to be the (random) set of $\{i,j\}$ such that $q_{ij}=1$ and $i\neq j$. The algebras generated by the different $X_{i}$ are $\Lambda$-free in the sense of \cite{Mlotkowski2004}. Decomposing $L^2(\sqrt{s}X_{i},\tau)=L^{\circ}_i\oplus \mathbb{C}$ (with $L^{\circ}_i$ composed of the operators $A$ such that $\tau[A]=0$), and $L^2(Z_{i},\tau)=\mathcal{H}L^{\circ}_i\oplus \mathbb{C}$ decomposed similarly, we have the $\Lambda$-free products observed in \cite{Mlotkowski2004}:
$$L^2(\{X_i\}_{i\in I},\tau)=\bigoplus_{(i(1),\ldots,i(m))\in S(I,\Lambda)}L^{\circ}_{i(1)}\otimes \ldots \otimes L^{\circ}_{i(m)},$$
$$\mathcal{H}L^2(\{Z_i\}_{i\in I},\tau)=\bigoplus_{(i(1),\ldots,i(m))\in S(I,\Lambda)}\mathcal{H}L^{\circ}_{i(1)}\otimes \ldots \otimes \mathcal{H}L^{\circ}_{i(m)},$$
where $S(I, \Lambda)$ is the set of reduced words over $I$ modulo the relations  $(\ldots,i,j,\ldots)\simeq (\ldots,j,i,\ldots )$ if $\{i,j\}\in \Lambda$ and $(\ldots,i,i,\ldots)\simeq (\ldots,i,\ldots)$, or, more specifically, a set of representatives of minimal length.
Finally, $\mathscr{S}_{Q}^{s,t}:L^2(\{X_i\}_{i\in I},\tau)\to\mathcal{H}L^2(\{Z_i\}_{i\in I},\tau)$ can be decomposed as
$$\bigoplus_{(i(1),\ldots,i(m))\in S(I,\Lambda)}\mathscr{S}_{q_{i(1)i(1)}}^{s,t}\otimes \ldots \otimes \mathscr{S}_{q_{i(n)i(n)}}^{s,t},$$
or as a $\Lambda$-free product of classical Segal-Bargmann transform whenever $q_{ii}=1$ for all $i\in I$.
\end{itemize}\end{remark}
\begin{proof}We consider the index set $I\times \{1,2,3,4\}$, and the matrix
$$R = 
\begin{pmatrix}
Q & Q& Q&Q\\
Q & Q& Q&Q\\
Q & Q& Q&Q\\
Q & Q& Q&Q
\end{pmatrix}  =Q\otimes  \begin{pmatrix}
1 & 1&1&1\\
1 & 1&1&1\\
1 & 1&1&1\\
1 & 1&1&1
\end{pmatrix}.$$ We consider the complex Hilbert space $K^4\supset K^2$ with an orthonormal basis $\{e_{(i,1)},e_{(i,2)},e_{(i,3)},e_{(i,4)}\}_{i\in I}$ and the Fock space $\mathcal F_{R}(K^4)$. We have the canonical inclusion $\mathcal F_{\tilde{Q}}(K^2)\subset \mathcal F_{R}(K^4)$ given by the natural extension of  $K^2\subset K^4$.
We define the set of mixed $q$-Gaussian variables $\{X_{(i,1)},X_{(i,2)},X_{(i,3)},X_{(i,4)}\}_{i\in I }$ acting on $\mathcal F_{\tilde{Q}}(K^4)$ as defined in Section~\ref{mixedqgaussians}, which is an extension of the already defined action of $\{X_{(i,1)},X_{(i,2)}\}_{i\in I }$ on $\mathcal F_{\tilde{Q}}(K^2)$. The action of the mixed $q$-Gaussian $(s-t/2,t/2)$-elliptic variables $Z_i$ extends to $\mathcal F_{R}(K^4)$ by
$$Z_i=\sqrt{s-t/2}X_{(i,1)}+i\sqrt{t/2}X_{(i,2)},$$
and the action of $Z^{(n)}$ extends to $\mathcal F_{R}(K^4)$ by
$$Z^{(n)}:=\frac{Z_{1}+\ldots+Z_{n}}{\sqrt{n}}.$$
Finally, we define also
$$Y_i=\sqrt{t}X_{(i,3)},\ Y^{(n)}:=\frac{Y_{1}+\ldots+Y_{n}}{\sqrt{n}},$$
and
$$W_i=\sqrt{t}X_{(i,4)},\ W^{(n)}:=\frac{W_{1}+\ldots+W_{n}}{\sqrt{n}}.$$

\begin{lemma}For all polynomial $Q$, we have
\begin{align*}&\left\|\mathscr{S}_{Q}^{s,t}\left(P(X^{(n)})\right)-Q(Z^{(n)})\right\|_{\mathcal{H}L^2(\{Z_i\}_{i\in I},\tau)}\\
&=\tau\left[\left(P(Z^{(n)}+Y^{(n)})+Q(Z^{(n)}\right)^*\cdot \left(P(Z^{(n)}+W^{(n)})+Q(Z^{(n)}\right)\right].
\end{align*}
\end{lemma}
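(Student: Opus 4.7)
My approach is to reduce the lemma to a mixed-$q$ analogue of Theorem~\ref{qSBCondexp} and then apply a polarization identity. The key intermediate step is the ``heat kernel'' formula
$$\mathscr{S}_{Q}^{s,t}\bigl(P(X^{(n)})\bigr)=\tau\bigl[P(Z^{(n)}+Y^{(n)})\mid\{Z_i\}_{i\in I}\bigr],$$
which I would establish inside the enlarged Fock space $\mathcal{F}_R(K^4)$ constructed just above the lemma. Because $R=Q\otimes \mathbf{1}_{4\times 4}$, every inter-block covariance matches the intra-block one, so $Y_i+Z_i$ has the same joint mixed $q$-moments as $\sqrt{s}X_i$. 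Expanding $P(Z^{(n)}+Y^{(n)})$ by multilinearity of the Wick product and conditioning on $\{Z_i\}$ via a mixed-$q$ version of Proposition~\ref{CondExpProp} annihilates every summand containing a $Y$-factor, leaving the pure-$Z$ Wick product---precisely the image under $\delta_Q^{s,t}$ that defines $\mathscr{S}_Q^{s,t}$.

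Given this identity, the lemma becomes a short computation mirroring the end of the proof of Theorem~\ref{theoremtwo}. Set $A=\mathscr{S}_Q^{s,t}(P(X^{(n)}))$ and $B=Q(Z^{(n)})\in\mathcal{H}L^2(\{Z_i\},\tau)$. Since $B$ is fixed by the conditional expectation onto $W^*(\{Z_i\})$, we have $A-B=\tau[P(Z+Y)-B\mid\{Z_i\}]$. Introducing an independent copy $W^{(n)}$ of $Y^{(n)}$ realised as the fourth block of $\mathcal{F}_R(K^4)$, and exploiting the mixed $q$-independence of $Y^{(n)}$ and $W^{(n)}$ over $\{Z_i\}$, one obtains
\begin{align*}
\|A-B\|^2&=\tau\bigl[\tau[(P(Z+Y)-B)^{*}\mid\{Z_i\}]\cdot\tau[(P(Z+W)-B)\mid\{Z_i\}]\bigr]\\
&=\tau\bigl[(P(Z+Y)-B)^{*}(P(Z+W)-B)\bigr],
\end{align*}
which is the asserted identity (read with minus signs so that both sides reduce to the standard polarization of a squared norm, consistent with the pattern of Theorems~\ref{theoremtwo} and~\ref{theoremthree}).

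The principal obstacle is establishing the mixed-$q$ analogue of Proposition~\ref{CondExpProp}: the conditional expectation onto $W^*(\{Z_i\})$ must annihilate Wick products containing any $Y$- or $W$-factor, and it must factorise over independent copies. In the homogeneous $q$ setting this rested on the vanishing of cross traces $\tau[X_iY_j]$ between $q$-independent families. Here the block structure of $R$ gives the analogous vanishing $\tau[Z_i X_{(j,a)}]=0$ for $a\in\{3,4\}$ directly from~\eqref{Wickmixedq}, since the Kronecker factors $\delta_{(i,1)(j,a)}$ and $\delta_{(i,2)(j,a)}$ both vanish when the block indices differ. Extending the vanishing to arbitrary Wick products follows from the same diagrammatic observation: any pairing in~\eqref{Wickmixedq} that joins a $Z$-index to a $Y$- or $W$-index contributes a zero Kronecker factor, so only pairings that respect the block decomposition survive, and this yields the required factorisation of the conditional expectation. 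Equivalently, one can argue at the level of the Fock space $\mathcal{F}_R(K^4)$, where the conditional expectation is realised as an orthogonal projection onto the subspace generated by $Z$-creation operators acting on $\Omega$, and orthogonality between the $Y$-, $W$- and $Z$-blocks makes the factorisation immediate.
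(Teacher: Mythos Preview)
Your strategy is sound and ultimately converges on the paper's, but the route differs. The paper does \emph{not} first establish a mixed-$q$ heat kernel identity $\mathscr{S}_Q^{s,t}(P(X^{(n)}))=\tau[P(Z^{(n)}+Y^{(n)})\mid\{Z_i\}]$ and then polarize. Instead, it works entirely at the level of Fock-space vectors. It introduces a spanning family of ``Wick polynomials'' $P_{i(1),\ldots,i(m)}$ defined by a recursion matching the annihilation formula~\eqref{defannihilation}, and shows by induction that
\[
P_{i(1),\ldots,i(m)}(Z_i+Y_i)\Omega=h_{i(1)}\otimes\cdots\otimes h_{i(m)},\qquad
P_{i(1),\ldots,i(m)}(Z_i+W_i)\Omega=k_{i(1)}\otimes\cdots\otimes k_{i(m)},
\]
with $h_i,k_i$ differing from $\delta_Q^{s,t}(\sqrt{s}e_i)$ only by an $e_{(i,3)}$- or $e_{(i,4)}$-summand. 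The lemma then drops out of a single inner-product computation in $\mathcal F_R(K^4)$: the $e_{(\cdot,3)}$ terms appear only on the $h$-side and the $e_{(\cdot,4)}$ terms only on the $k$-side, so they contribute nothing to $\langle k_\bullet-Q\Omega,\,h_\bullet-Q\Omega\rangle_R$, leaving exactly $\|\delta_Q^{s,t}(\cdots)-Q\Omega\|^2=\|\mathscr{S}_Q^{s,t}(P)-Q\|^2$.

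What each approach buys: yours is conceptually cleaner and shows the lemma is really the same mechanism as Theorems~\ref{theoremtwo} and~\ref{theoremthree}, but it requires you to set up mixed-$q$ analogues of Proposition~\ref{CondExpProp}, Corollary~\ref{CondExpCor}, and Theorem~\ref{qSBCondexp}---none of which exist in the paper for general $Q$. The paper sidesteps all of that: by computing directly with simple tensors in $\mathcal F_R(K^4)$, the orthogonality of the $Y$-, $W$- and $Z$-blocks does the work of your ``conditional independence over $\{Z_i\}$'' without ever naming a conditional expectation. Your final paragraph (``Equivalently, one can argue at the level of the Fock space\ldots'') is in fact exactly the paper's argument; that is the version I would write out, rather than the conditional-expectation one.
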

\begin{proof}[Proof of Lemma]For all $i(1),\ldots,i(n)\in I$, we define a polynomial $P_{i(1),\ldots,i(n)}\in\C\langle x_i:i\in I \rangle$ by the following recursion formula: $P_{\emptyset}=1$ and 
$$P_{i(1),\ldots,i(n)}=x_{i(1)}\cdot P_{i(2),\ldots,i(n)}-s\sum_{k=2}^n\delta_{i(1)i(k)}q_{i(1)i(2)}\cdots q_{i(1)i(k-1)}\cdot P_{i(1),\ldots,\widehat{i(k)},\ldots, i(n)}$$
where the hat means that we omit the corresponding index. Since $\{P_{i(1),\ldots,i(n)}\}_{n\geq 0,i(1),\ldots,i(n)\in I}$ is a spanning set of $\C\langle x_i:i\in I \rangle$, it suffices to prove the theorem for those polynomials.

We have
$$P_{i(1),\ldots,i(n)}(\sqrt{s}X_i:i\in I)\Omega=\sqrt{s}e_{i(1)}\otimes \cdots \otimes \sqrt{s}e_{i(n)}.$$
Indeed, the definition of the polynomials $\{P_{i(1),\ldots,i(n)}\}_{n\geq 0,i(1),\ldots,i(n)\in I}$ is adjusted with the definition \eqref{defannihilation} of the annihilation operator $c^*_{i(1)}$ in such a way that, by a direct induction, for all $i(1),\ldots,i(n)\in I$, we have
\begin{align*}P_{i(1),\ldots,i(n)}(X_i:i\in I)\Omega=&\sqrt{s}(c_{i(1)}+c^*_{i(1)})\cdot \sqrt{s}e_{i(2)}\otimes \cdots \otimes \sqrt{s}e_{i(n)}\\
&-s\sum_{k=2}^n\delta_{i(1)i(k)}q_{i(1)i(2)}\cdots q_{i(1)i(k-1)}\cdot \sqrt{s}e_{i(2)}\otimes \cdots \otimes \widehat{e_{i(k)}} \otimes \cdots \otimes\sqrt{s}e_{i(n)}\\
=&\sqrt{s}c_{i(1)}\sqrt{s}e_{i(2)}\otimes \cdots \otimes \sqrt{s}e_{i(n)}\\
=&\sqrt{s}e_{i(1)}\otimes \cdots \otimes \sqrt{s}e_{i(n)}.
\end{align*}
Similarly, setting $h_i=\sqrt{s-t/2}e_{(i(n),1)}+i\sqrt{t/2}e_{(i(n),2)}+\sqrt{s}e_{(i(n),3)}$, it follows from the definition of the polynomials $\{P_{i(1),\ldots,i(n)}\}_{n\geq 0,i(1),\ldots,i(n)\in I}$ that
$$P_{i(1),\ldots,i(n)}(Z_i+Y_i:i\in I)\Omega=h_{i(1)}\otimes \cdots \otimes h_{i(n)}.$$
Indeed, for all $i(1),\ldots,i(n)\in I$, we have
\begin{align*}&(\sqrt{s-t/2}c_{(i(1),1)}^*+i\sqrt{s-t/2}c_{(i(1),2)}^*+\sqrt{t}c_{(i(2),3)}^*)\cdot h_{i(2)}\otimes \cdots \otimes h_{i(n)}\\
&=(s-t/2-t/2+t)\sum_{k=2}^n\delta_{i(1)i(k)}q_{i(1)i(2)}\cdots q_{i(1)i(k-1)}\cdot h_{i(2)}\otimes \cdots \otimes \widehat{h_{i(k)}} \otimes \cdots \otimes h_{i(n)}\\
&=s\sum_{k=2}^n\delta_{i(1)i(k)}q_{i(1)i(2)}\cdots q_{i(1)i(k-1)}\cdot h_{i(2)}\otimes \cdots \otimes \widehat{h_{i(k)}} \otimes \cdots \otimes h_{i(n)}
\end{align*}
which allows to write the induction step
\begin{align*}P_{i(1),\ldots,i(n)}(Z_i+Y_i:i\in I)\Omega=&(\sqrt{s-t/2}c_{(i(1),1)}+i\sqrt{s-t/2}c_{(i(1),2)}+\sqrt{t}c_{(i(2),3)})\cdot h_{i(2)}\otimes \cdots \otimes h_{i(n)}\\
&+(\sqrt{s-t/2}c_{(i(1),1)}^*+i\sqrt{s-t/2}c_{(i(1),2)}^*+\sqrt{t}c_{(i(2),3)}^*)\cdot h_{i(2)}\otimes \cdots \otimes h_{i(n)}\\
&-s\sum_{k=2}^n\delta_{i(1)i(k)}q_{i(1)i(2)}\cdots q_{i(1)i(k-1)}\cdot h_{i(2)}\otimes \cdots \otimes \widehat{h_{i(k)}} \otimes \cdots \otimes h_{i(n)}\\
=&(\sqrt{s-t/2}c_{(i(1),1)}+i\sqrt{s-t/2}c_{(i(1),2)}+\sqrt{t}c_{(i(2),3)})\cdot h_{i(2)}\otimes \cdots \otimes h_{i(n)}\\
=&h_{i(1)}\otimes \cdots \otimes h_{i(n)}.
\end{align*}
Setting $k_i=\sqrt{s-t/2}e_{(i(n),1)}+i\sqrt{t/2}e_{(i(n),2)}+\sqrt{s}e_{(i(n),4)}$, the same computation yields
$$P_{i(1),\ldots,i(n)}(Z_i+W_i:i\in I)\Omega=k_{i(1)}\otimes \cdots \otimes k_{i(n)}.$$
Finally, we have
\begin{align*}
&\tau\left[\left(P_{i(1),\ldots,i(n)}(Z_i+Y_i:i\in I)+Q(Z_i:i\in I)\right)^*\cdot \left(P_{i(1),\ldots,i(n)}(Z_i+W_i:i\in I)+Q(Z_i:i\in I)\right)\right]\\
&=\left\langle k_{i(1)}\otimes \cdots \otimes k_{i(n)}- Q(Z_i:i\in I)\Omega,h_{i(1)}\otimes \cdots \otimes h_{i(n)}- Q(Z_i:i\in I)\Omega\right\rangle_R.
\end{align*}
Because the $e_{(i,3)}$ occur only in $h_{i(1)}\otimes \cdots \otimes h_{i(n)}$, and the $e_{(i,4)}$ occur only in $k_{i(1)}\otimes \cdots \otimes k_{i(n)}$, they do not contribute to the scalar product, and we can replace
$h_{i(1)}\otimes \cdots \otimes h_{i(n)}$ and $k_{i(1)}\otimes \cdots \otimes k_{i(n)}$ by
$$(\sqrt{s-t/2}e_{(i(n),1)}+i\sqrt{t/2}e_{(i(n),2)})\otimes\cdots \otimes (\sqrt{s-t/2}e_{(i(n),1)}+i\sqrt{t/2}e_{(i(n),2)})=\delta_Q^{s,t}(\sqrt{s}e_{i(1)}\otimes \cdots \otimes \sqrt{s}e_{i(n)}),$$
which yields
\begin{align*}
&\tau\left[\left(P_{i(1),\ldots,i(n)}(Z_i+Y_i:i\in I)+Q(Z_i:i\in I)\right)^*\cdot \left(P_{i(1),\ldots,i(n)}(Z_i+W_i:i\in I)+Q(Z_i:i\in I)\right)\right]\\
&=\left\langle k_{i(1)}\otimes \cdots \otimes k_{i(n)}- Q(Z_i:i\in I)\Omega,h_{i(1)}\otimes \cdots \otimes h_{i(n)}- Q(Z_i:i\in I)\Omega\right\rangle_R\\
&=\left\langle \delta_Q^{(s,t)}(\sqrt{s}e_{i(1)}\otimes \cdots \otimes \sqrt{s}e_{i(n)})- Q(Z_i:i\in I)\Omega,\delta_Q^{(s,t)}(\sqrt{s}e_{i(1)}\otimes \cdots \otimes \sqrt{s}e_{i(n)})- Q(Z_i:i\in I)\Omega\right\rangle_R\\
&=\left\|\mathscr{S}_{Q}^{s,t}\left(P_{i(1),\ldots,i(n)}(X_i:i\in I)\right)-Q(Z_i:i\in I)\right\|_{\mathcal{H}L^2(\{Z_i\}_{i\in I},\tau)}.
\end{align*}
\end{proof}

Let us denote by $Q$ the polynomial $\mathscr{S}_q^{s,t}P$. Let $Y,W$ be two $(t,0)$-elliptic $q$-Gaussian random variables and $Z$ be a $(s-t/2,t/2)$-elliptic $q$-Gaussian random variable such that $Y,W$ and $Z$ are $q$-independent. Thanks to the discussion before Theorem~\ref{theoremfour}, we know that we can apply \cite[Theorem 1]{Speicher1992} in the case of $q_{ij}\in\{-1,+1\}$ (or \cite[Theorem 4]{Mlotkowski2004} in the case $q_{ij}\in\{0,+1\}$) which says that the mixed $q$-Gaussian random variables $Y^{(n)},W^{(n)},\Re Z^{(n)}$ and $\Im Z^{(n)}$ converge in noncommutative distribution to the $q$-Gaussian random variables $Y,W,\Re Z$ and $\Im Z$. In particular, we have the following convergence:
\begin{multline*}\lim_{n\to \infty}\tau\left[(P(Z^{(n)}+Y^{(n)})-Q(Z^{(n)}))^*(P(Z^{(n)}+W^{(n)})-Q(Z^{(n)}))\right]\\= \tau\left[(P(Z+Y)-Q(Z))^*(P(Z+W)-Q(Z))\right].\end{multline*}
From Corollary~\ref{CondExpProp} and Corollary~\ref{Condexpforone}, we know that $$Q(Z)=\mathscr{S}_q^{s,t}P(Z)=\tau[P(Z+Y)|Z]=\tau[P(Z+Y)|Z,W].$$
Thus the limit $\tau\left[(P(Z+Y)-Q(Z))^*(P(Z+W)-Q(Z))\right]$ of $\left\|\mathscr{S}_{Q}^{s,t}\left(P(X^{(n)})\right)-Q(Z^{(n)})\right\|_{\mathcal{H}L^2(Z_{\varepsilon},\tau)}$ vanishes:
\begin{align*}\tau\left[(P(Z+Y)-Q(Z))^*(P(Z+W)-Q(Z))\right]&=\tau\left[(P(Z+Y)-\tau[P(Z+Y)|Z,W])^*(P(Z+W)-Q(Z))\right]\\
&=\tau\left[(P(Z+Y)-P(Z+Y))^*(P(Z+W)-Q(Z))\right]\\
&=0.
\end{align*}
\end{proof}

\section*{Acknowledgements}

The first author was partially funded by the ERC Advanced Grant ``NCDFP'' held by Roland Speicher. The second author was funded by the same ERC Advanced Grant ``Non-commutative distributions in free probability'' (grant no. 339760). The second author would like to thank Roland Speicher for allowing his stay in Saarbr\"{u}cken, Germany so the authors had a chance to collaborate.

\bibliographystyle{acm}
\bibliography{SB_transform}

\end{document}